\newtheorem{them}{Theorem}
\newtheorem{lemm}[them]{Lemma}
\newenvironment{lmm}{\begin{lemm}}{\end{lemm}}
\newtheorem{propo}[them]{Proposition}
\newenvironment{prop}{\begin{propo}}{\end{propo}}
\title[Einstein metrics on compact Lie groups]{Einstein metrics on compact Lie groups \\
which are not naturally reductive}
\author[Arvanitoyeorgos]
{Andreas Arvanitoyeorgos} 
\author[Mori]{Kunihiko Mori}
\author[Sakane]{Yusuke Sakane}
\thanks{The first author was partially supported by the C. Carath\' eodory grant \# C.161 / 2007-10, University of Patras}
\date{February, 4 2009}
\address{ University of Patras, Department of Mathematics, GR-26500 Rion, Greece}
\email{arvanito@math.upatras.gr}
\address{Saibi-Heisei Junior \& Senior High School, 5\,-\,6\,-\,3, K${\bar{\mbox{u}}}$k${\bar{\mbox{o}}}$-d${\bar{\mbox{o}}}$ri, Matsuyama, Ehime,  791-0054, Japan}
\address{Osaka University,
Department of Pure and Applied Mathematics,
Graduate School of Information Science and Technology,  Machikaneyama 1-1, Toyonaka, Osaka, 560-043, Japan}
\email{sakane@math.sci.osaka-u.ac.jp}
\keywords{Einstein metrics, homogeneous spaces,  naturally reductive metrics, K\"ahler C-spaces}
\subjclass[2000]{Primary  53C25; Secondary 53C30, 17B20.}
\begin{document}

\begin{abstract}
 The study of  left-invariant Einstein metrics   on compact Lie groups which are naturally reductive was initiated by J. E. D'Atri and W. Ziller in 1979.  
 In the present work we prove existence of non-naturally reductive Einstein metrics on
the compact simple Lie groups $SO(n)$ ($n \geq 11$),   $Sp(n)$ ($n \geq 3$), 
  $E_6$, $E_7$, and $E_8$.
  \end{abstract}
  
\maketitle

 \section{Introduction}

A Riemannian manifold $(M, g)$ is called  Einstein  if the Ricci tensor $r(g)$  of the metirc $g$ satisfies $r(g) = e g$ for some constant $e$. 
General existence results are few and difficult to obtain.  Among them we mention the works \cite{B}, \cite{BK}, \cite{BWZ}, \cite{WZ}, and the survey \cite{NRS}.

For the case of compact Lie groups, left-invariant Einstein metrics have not been
widely studied.  Even in low dimensional examples such as $SU(3)$ and $SU(2)\times SU(2)$ the number of left-invariant Einstein metrics is still unknown.
The only complete work is \cite{dazi} by J.E. D'Atri and W. Ziller,   in which  they obtained a large number of left-invariant Einstein metrics that are naturally reductive. 

The problem of finding non-naturally reductive  left-invariant Einstein metrics on compact Lie groups
seems to be harder, and in fact this is stressed in \cite{dazi} (p. 62). 
In \cite{mo} the second author initiated the study of this problem, and  he obtained
non-naturally reductive Einstein metrics on the Lie group $SU(n)$  for $n \ge 6$ by using the method of Riemannian submersions (see e.g. \cite{bes}, Chapter 9). 
In the present work we prove existence of left-invariant Einstein metrics on several compact Lie groups, which are not naturally reductive.  

To every compact simple Lie group $G$ we associate a K\"ahler C-space, which is a homogeneous space $G/H$ with $H$ the centralizer of a torus in $G$ (also known as generalized flag manifold).  It is known that  
K\"ahler C-spaces are classified by use of painted Dynkin diagrams, and that each of them  can be fibered over an irreducible symmetric space $G/K$ of compact type under the twistor fibration (\cite{buraw}). 
We assume that the isotropy representation  of the corresponding K\"ahler C-space $G/H$  decomposes into two irreducible components. It is known that these are mutually non-equivalent as $\mbox{Ad}(H)$-modules.  
Then these spaces  
can be classified in terms of painted Dynkin diagrams with one black root, and 
to simplify our study, we divide these into four types Ia, Ib, IIa, and IIb, depending on
whether the black root is next to the negative of the maximal root, and whether the black root separates the Dynkin diagram into one or two components.

It turns out that  
the left-invariant metrics $<\ \  ,\ \ >$ on $G$ associated to $G/H$ depend on five or four parameters in general.
We also consider left-invariant metrics $<<\ \ , \ \ >>$ on $G$ associated to the symmetric space $G/K$.
By comparing these two metrics we obtain the components of the Ricci tensor of
the metric $<\ \  ,\ \ >$ on
$G$.
In this way the Einstein equation reduces to a more explicit form as an algebraic system  of equations.  This system reduces further to a polynomial equation of one
variable, and
we prove existence of left-invariant Einstein metrics on $G$ by proving existence of positive solutions for such a polynomial equation.  
For certain cases of simple Lie groups it is possible to prove existence of more than one solutions.

For K\"ahler C-spaces of Type Ia and IIa the solutions correspond to naturally reductive Einstein metrics, whereas  non-naturally reductive Einstein metrics on $G$ are obtained from
K\"ahler C-spaces $G/H$ of Types Ib and IIb.
The main result is the following: 

\begin{them}\label{main}  The compact simple Lie groups $SO(n)$ {\em(}$n \geq 11${\em)},   $Sp(n)$ {\em(}$n \geq 3${\em)}, 
  $E_6$, $E_7$, and $E_8$ admit non-naturally reductive Einstein metrics.
\end{them}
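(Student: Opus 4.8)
The plan is to realize each of the listed Lie groups $G$ as the total space of the construction sketched in the introduction: choose an appropriate Kähler C-space $G/H$ whose isotropy representation splits into exactly two irreducible, mutually non-equivalent $\mathrm{Ad}(H)$-summands $\mathfrak{m}_1 \oplus \mathfrak{m}_2$, where $\mathfrak{m}_1$ is the isotropy summand of the symmetric fibration $G/H \to G/K$ (so $\mathfrak{k} = \mathfrak{h} \oplus \mathfrak{m}_1$) and $\mathfrak{m}_2$ is the tangent space to the fiber $K/H$. Writing $\mathfrak{g} = \mathfrak{h} \oplus \mathfrak{m}_1 \oplus \mathfrak{m}_2$, I would consider the two-parameter family of left-invariant metrics
\[
\langle\ ,\ \rangle = x_0\,(-B)|_{\mathfrak{h}} + x_1\,(-B)|_{\mathfrak{m}_1} + x_2\,(-B)|_{\mathfrak{m}_2},
\]
with $B$ the Killing form, together with the auxiliary left-invariant metric $\langle\langle\ ,\ \rangle\rangle$ adapted to $G/K$ (i.e. with $\mathfrak{m}_1$ and $\mathfrak{m}_2$ scaled so that it descends from the symmetric metric). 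Comparing Ricci curvatures of these two metrics — using that $G/K$ is symmetric, so its isotropy Casimir data are explicit — reduces the Ricci tensor components $r_0, r_1, r_2$ of $\langle\ ,\ \rangle$ to rational expressions in $x_0, x_1, x_2$ whose coefficients are the structure constants $[ijk] = \sum (-B)([e_i,e_j],e_k)^2$ relative to orthonormal bases of the three summands; these in turn are pinned down by the Kähler condition and the twistor fibration data, i.e. by the painted Dynkin diagram with one black root.

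From there the Einstein condition $r_0 = r_1 = r_2$ becomes, after normalizing $x_0 = 1$, a system of two polynomial equations in $(x_1, x_2)$. The key reduction — which is where the real work lies — is to eliminate one variable and show the resulting one-variable polynomial $P(t)$ has a positive real root, and moreover a root that does not correspond to a naturally reductive metric. Concretely, D'Atri–Ziller's classification tells us exactly which left-invariant metrics on $G$ of this diagonal form are naturally reductive (essentially those for which the metric is induced via a subgroup in a prescribed way, forcing an algebraic relation among $x_0, x_1, x_2$); so I would exhibit, for each family Ib and IIb, a solution of $P(t)=0$ that violates that relation. Existence of the positive root I expect to get either by an explicit factorization or, more robustly, by an intermediate-value / sign-change argument on $P$, possibly after checking numerically for the sporadic cases $E_6, E_7, E_8$ and giving a uniform estimate for the series $SO(n)$ ($n \geq 11$) and $Sp(n)$ ($n \geq 3$) where the coefficients of $P$ depend polynomially on $n$.

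The main obstacle, I anticipate, is twofold. First, organizing the case analysis: the four types Ia, Ib, IIa, IIb arise from different shapes of painted Dynkin diagrams, and for each of the target groups one must select a specific Kähler C-space $G/H$ of Type Ib or IIb and compute its structure constants — this is a finite but delicate bookkeeping task, since $SO(n)$ and $Sp(n)$ require the whole series at once. Second, and more seriously, controlling the root of $P(t)$: it is easy to produce \emph{some} positive solution of the Einstein system, but one must certify it is genuinely non-naturally reductive, which means the D'Atri–Ziller exclusion must be checked against the actual numerical or symbolic value of the solution; degenerate coincidences (where the Einstein solution happens to land on the naturally reductive locus) would have to be ruled out group-by-group. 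I would handle the sporadic groups by direct computation and isolate a clean sub-family of the $SO(n)$, $Sp(n)$ series (for instance using the C-space $SO(n)/(SO(n-2)\times SO(2))$-type or $Sp(n)/(Sp(n-1)\times U(1))$-type fibrations) for which $P(t)$ and the naturally-reductive relation can be written down in closed form in $n$, and then argue existence and non-naturality uniformly for $n$ in the stated ranges.
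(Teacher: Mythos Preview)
Your overall strategy --- choose a K\"ahler C-space $G/H$ of type Ib or IIb, write diagonal left-invariant metrics on $G$ with respect to $\mathfrak{g}=\mathfrak{h}\oplus\mathfrak{m}_1\oplus\mathfrak{m}_2$, reduce the Einstein system to a one-variable polynomial, and check the solutions against the D'Atri--Ziller list --- is exactly the paper's strategy. But there is a genuine gap in the way you parametrize the metrics, and it breaks the argument before you ever reach the polynomial $P(t)$.

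You put a \emph{single} scaling $x_0$ on all of $\mathfrak{h}$ and then speak of three Ricci components $r_0,r_1,r_2$. In every case under consideration $\mathfrak{h}$ is not simple: it splits as $\mathfrak{h}_0\oplus\mathfrak{h}_1$ (Type I) or $\mathfrak{h}_0\oplus\mathfrak{h}_1\oplus\mathfrak{h}_2$ (Type II), with $\mathfrak{h}_0$ the one-dimensional center and $\mathfrak{h}_i$ simple ideals. Any $\mathrm{Ad}(H)$-invariant symmetric $2$-tensor --- in particular the Ricci tensor of your metric --- is block-diagonal with an \emph{independent} scalar on each $\mathfrak{h}_i$. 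So even when you set $u_0=u_1=u_2=x_0$, the Ricci tensor still has four or five distinct components $r_0,r_1,(r_2,)\,r_3,r_4$, and the Einstein condition is three or four equations after killing the scale, not two. With only two effective parameters $(x_1,x_2)$ your system is overdetermined, and one checks directly from the explicit Ricci formulas that it has no solutions beyond the bi-invariant one.

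The paper's remedy is precisely to allow independent parameters $u_0,u_1,(u_2)$ on the ideals of $\mathfrak{h}$, so that the number of metric parameters matches the number of Ricci components. With that enlargement the linear part of the Einstein system solves for $u_0,u_1,e$ in terms of $x_2$ (and $u_2$), a quadratic in $u_2$ factors off the naturally reductive branch, and what remains is a genuine one-variable polynomial in $x_2$ whose coefficients depend on $n$. This extra freedom is not cosmetic: the naturally reductive locus in this family is characterized (Proposition~\ref{prop4}) by $x_1=x_2$ or $u_0=u_1=x_2$, and the non-naturally-reductive solutions the paper finds have $u_0\neq u_1$, so they do not live in your three-parameter slice at all.
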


We remark that it is still unknown whether the compact simple Lie groups $SU(n)$ ($n=3, 4, 5$), 
$SO(n)$ ($n=  5, 7, 8, 9, 10$), $F_4$ and  $G_2$ admit a non-naturally reductive Einstein metric.

\section{The Ricci tensor of a $G$-invariant metric}

Let $(M, g)$ be a Riemannian manifold and $I(M, g)$ the Lie
group of all isometries of $M$. Then  $(M, g)$ is
said to be $K$-homogeneous if a Lie subgroup $K$ of $I(M, g)$ acts
transitively on $M$. For a $K$-homogeneous Riemannian manifold $(M,
g)$, we fix a point $o \in M$ and identify $M$ with $K/L$ where
$L$ is the isotropy subgroup of $K$ at $o$. Let $\frak k$ be the
Lie algebra of $K$ and $\frak l$ the subalgebra corresponding to
$L$. Take a vector space $\frak p$ complement to $\frak l$ in $\frak
k$ with $\mbox{Ad}(L){\frak p} \subset{\frak p}$. Then we
may identify ${\frak p}$ with $T_o(M)$ in a natural way. We can
pull back the inner product $g_o$ on $T_o(M)$ to an inner product on
$\frak p$, denoted by $< \,\, , \,\, >$.  For $X \in {\frak k}$ we
will denote by $X_{\mbox{\footnotesize$ \frak l$}}$ ( resp. $X_{\mbox{\footnotesize$ \frak p$}}$ ) the ${\frak
l}$-component ( resp. ${\frak p}$-component ) of $X$. A homogeneous
Riemannian metric on $M$ is said to be {\it naturally reductive} if there
exist $K$ and ${\frak p}$ as above such that 
$$
 < \left[ Z, X \right]_{\mbox{\footnotesize$ \frak p$}}, Y > + < X,\left[ Z, Y \right]_{\mbox{\footnotesize$ \frak p$}} > = 0  \quad
\mbox{for all}  \,\, X, Y, Z \in {\frak p}.
$$

In \cite{dazi} D'Atri and Ziller  have investigated naturally reductive metrics
among the left invariant metrics on compact Lie groups, and have
given a complete classification in the case of simple Lie groups. 

Let $G$ be a compact connected semi-simple Lie group, $H$ a closed subgroup of $G$, and let $\frak g$ be the Lie algebra of $G$ and $\frak h$ the subalgebra corresponding to $H$. We denote by $B$ the negative of the Killing form of $\frak
g$. Then $B$ is an $\mbox{Ad}(G)$-invariant inner product
on $\frak g$. Let ${\frak m}$ be a orthogonal complement of ${\frak h}$ with respect to $B$. Then we have 
$${\frak g} = {\frak h} \oplus {\frak m}, 
\quad \quad \mbox{Ad}(H){\frak m} \subset {\frak m}.$$
Let $ {\frak h} = {\frak h}_0 \oplus {\frak h}_1 \oplus \cdots \oplus {\frak h}_p$ be the  decomposition into ideals of ${\frak h}$, 
where ${\frak h}_0$ is the center of ${\frak h}$ and ${\frak h}_i$
$( i = 1, \cdots, p )$ are simple ideals of  ${\frak h}$.  Let 
$A_0|_{{\mbox{\footnotesize$ \frak h$}}_0}$ be an arbitrary metric on ${\frak h}_0$. 
\begin{them}{\em(D'Atri-Ziller \cite{dazi}) }
Under the notations above, a left invariant metric on $G$  of the form  
\begin{eqnarray}
< \,\, , \,\, >  = 
x\cdot B|_{\mbox{\footnotesize$ \frak m$}} + A_0|_{\mbox{\footnotesize$ \frak h$}_0^{}} + u_1\cdot B|_{\mbox{\footnotesize$ \frak h$}_1^{}} + 
\cdots +  u_p^{}\cdot B|_{\mbox{\footnotesize$ \frak h$}_p^{}} 
\quad \quad ( x, u_1, \cdots, u_p  \in {\mathbb R}_+ ) \label{eq1}
 \end{eqnarray}
is naturally reductive with respect to $G\times H$, where 
$G\times H$ acts on $G$ by $(g, h) y = g y h^{-1}$. 

Moreover, if a left invariant metric $< \,\, , \,\, >$ on a compact simple Lie group $G$ is naturally reductive, then there exists a closed subgroup $ H$ of
$G$ and the metric $< \,\, , \,\, >$ is given by   the form {\em (\ref{eq1})}. 
 \end{them}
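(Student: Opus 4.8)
To prove that every metric of the form~(\ref{eq1}) is naturally reductive with respect to $G\times H$, the plan is to realise $G$ as the homogeneous space $(G\times H)/\Delta H$, where $\Delta H=\{(h,h):h\in H\}$ is the isotropy at $e$ of the action $(g,h)\cdot y=gyh^{-1}$, with Lie algebra the diagonal $\Delta\mathfrak h\subset\mathfrak g\oplus\mathfrak h$, and then to choose a reductive complement tuned to the parameters. Write $\mathfrak h=\mathfrak h_0\oplus\mathfrak h_1\oplus\cdots\oplus\mathfrak h_p$; since $\mathrm{Ad}(H)$ acts trivially on the centre $\mathfrak h_0$ we may simultaneously diagonalise $A_0$ and $B|_{\mathfrak h_0}$, splitting $\mathfrak h_0$ into $B$-orthogonal lines, so that $\mathfrak h$ becomes a $B$-orthogonal sum of $\mathrm{Ad}(H)$-irreducible summands $\mathfrak h_i$, each carrying in~(\ref{eq1}) a scalar multiple $u_i\cdot B|_{\mathfrak h_i}$. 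I would then take
\[
\mathfrak p=\mathfrak m\oplus\{\,(W,\psi W):W\in\mathfrak h\,\},
\]
where $\psi$ is the $\mathrm{Ad}(H)$-equivariant operator acting on $\mathfrak h_i$ as the scalar $c_i:=1-x/u_i$. Since $c_i<1$, the operator $\psi-\mathrm{id}$ is invertible, so $\mathfrak p\oplus\Delta\mathfrak h=\mathfrak g\oplus\mathfrak h$, and $\mathfrak p$ is plainly $\mathrm{Ad}(\Delta H)$-invariant. Identifying $\mathfrak p\cong T_eG=\mathfrak g$ through the orbit map (the element $(X+W,\psi W)$ of $\mathfrak p$, with $X\in\mathfrak m$ and $W\in\mathfrak h$, corresponds to $X+(\mathrm{id}-\psi)W\in\mathfrak g$) turns~(\ref{eq1}) into an inner product on $\mathfrak p$, and it remains only to verify the identity $\langle[Z,X]_{\mathfrak p},Y\rangle+\langle X,[Z,Y]_{\mathfrak p}\rangle=0$.

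For that verification I would transport the bracket $[\,\cdot\,,\,\cdot\,]_{\mathfrak p}$ back to $\mathfrak g$: a short computation identifies it with the deformed product whose $\mathfrak m$-component is $[v_{\mathfrak m},w_{\mathfrak m}]_{\mathfrak m}+\sum_i(u_i/x)\bigl([v_{\mathfrak m},w_{\mathfrak h_i}]+[v_{\mathfrak h_i},w_{\mathfrak m}]\bigr)$ and whose $\mathfrak h_i$-component is $[v_{\mathfrak m},w_{\mathfrak m}]_{\mathfrak h_i}+\bigl((2u_i-x)/x\bigr)[v_{\mathfrak h_i},w_{\mathfrak h_i}]$. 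Feeding this into the natural reductivity identity and grouping the monomials by bracket type (all three arguments in $\mathfrak m$; two in $\mathfrak m$ and one in some $\mathfrak h_i$; one in $\mathfrak m$ and two in $\mathfrak h_i$; all in $\mathfrak h_i$), every group collapses to zero upon repeated use of the $\mathrm{ad}$-invariance of $B$ and the $B$-orthogonality of $\mathfrak g=\mathfrak h_0\oplus\cdots\oplus\mathfrak h_p\oplus\mathfrak m$; the constants $u_i/x$ are arranged precisely so that the $\mathfrak m$--$\mathfrak h$ cross terms cancel. This is the longest step but is pure bookkeeping, not a real obstacle. (Alternatively one may induce~(\ref{eq1}) from a left-invariant metric on $G\times H$ that restricts to $x\cdot B$ on the first factor and makes the projection $G\times H\to G$ a Riemannian submersion with totally geodesic fibres, and invoke the standard fact that such metrics are naturally reductive; the operator $\psi$ records the horizontal distribution.)

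For the converse, suppose the left-invariant metric $\langle\,,\,\rangle$ on the compact simple Lie group $G$ is naturally reductive with respect to some transitive isometry group $L$ and reductive complement $\mathfrak q$. The plan is threefold. First, replace $L$ by the canonical transitive group attached to the naturally reductive structure, namely the transvection group of the canonical connection $\nabla^{c}$ (a closed transitive subgroup of the isometry group, acting almost effectively, whose torsion and curvature are $\nabla^{c}$-parallel). Second, reduce to the case where this group is $(G\times H)/(\text{finite})$ acting by $(g,h)\cdot y=gyh^{-1}$, with $H$ the closed subgroup of $G$ whose right translations are isometries of $\langle\,,\,\rangle$; this uses the structure of the connected isometry group of a left-invariant metric on a compact simple Lie group (left translations together with the right translations that are isometries) to constrain the possible transitive subgroups. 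Third, run the natural reductivity identity in reverse for this presentation: its all-$\mathfrak m$ part forces $\langle\,,\,\rangle|_{\mathfrak m}$ to be a single multiple $x\cdot B|_{\mathfrak m}$, $\mathrm{Ad}(H)$-invariance forces a scalar multiple $u_i\cdot B$ on each simple ideal $\mathfrak h_i$, no condition constrains the centre $\mathfrak h_0$ (so $A_0|_{\mathfrak h_0}$ is arbitrary), and the mixed parts force the $\mathfrak h$--$\mathfrak m$ cross terms to vanish --- which is exactly~(\ref{eq1}). The principal obstacle is the second step: showing that the transvection group of a naturally reductive left-invariant metric on a compact simple group must be of this product type with the standard action. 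Once that is in hand, the third step is a bracket computation of the same flavour as the verification in the first part.
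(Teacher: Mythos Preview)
The paper does not prove this theorem at all: it is quoted verbatim as a result of D'Atri--Ziller \cite{dazi} and used as a black box (notably in the proofs of Propositions~\ref{prop4}, \ref{prop11}, \ref{prop12}, \ref{prop13}). There is therefore nothing in the paper to compare your proposal against.

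For what it is worth, your outline of the forward direction is essentially the standard argument and close to what D'Atri--Ziller actually do: realise $G$ as $(G\times H)/\Delta H$, pick a parameter-dependent reductive complement via the operator $\psi$, and reduce the natural reductivity identity to the $\mathrm{ad}$-invariance of $B$. For the converse you have correctly located the real work in your ``second step'': showing that the transitive group attached to the naturally reductive structure must act as $G\times H$ by $(g,h)\cdot y=gyh^{-1}$. That is the substance of D'Atri--Ziller's argument; it relies on the description of the full isometry group of a left-invariant metric on a compact simple Lie group (Ochiai--Takahashi) and a case analysis, and is not something one can simply wave through. Your third step is routine once the second is secured.
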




Let 
${\frak m} = {\frak m}_1 \oplus \cdots \oplus {\frak m}_q$ be
a decomposition into irreducible $\mbox{Ad}(H)$-modules 
${\frak m}_j$ $( j = 1, \cdots, q )$, and 
assume that the $\mbox{Ad}(H)$-modules 
${\frak m}_j$ are mutually non-equivalent, and that the ideals ${\frak h}_i$
$( i = 1, \cdots, p )$ of ${\frak h}$ are mutually non-isomorphic.
In particular, we assume that $\dim {\frak h}_0 \leq 1$. 

We consider the following left invariant metric on $G$ which is 
$\mbox{Ad}(H)$-invariant: 
\begin{eqnarray}
 < \,\, , \,\, >  =  
u_0\cdot B|_{\mbox{\footnotesize$ \frak h$}_0} +
 u_1\cdot B|_{\mbox{\footnotesize$ \frak h$}_1} + \cdots + 
 u_p\cdot B|_{\mbox{\footnotesize$ \frak h$}_p} + 
x_1\cdot B|_{\mbox{\footnotesize$ \frak m$}_1} + \cdots + 
 x_q\cdot B|_{\mbox{\footnotesize$ \frak m$}_q}, \label{eq2}
\end{eqnarray}
where 
$u_0, u_1, \cdots, u_p, x_1, \cdots, x_q  \in {\mathbb R}_+$,  
and the $G$-invariant Riemannian metric on $G/H$: 
\begin{eqnarray}
 ( \,\, , \,\, )  =  
x_1\cdot B|_{\mbox{\footnotesize$ \frak m$}_1} + \cdots + 
 x_q\cdot B|_{\mbox{\footnotesize$ \frak m$}_q}.  \label{eq21}
\end{eqnarray}
To compute  the Ricci tensor of the left invariant metric 
$< \,\, , \,\, > $ on $G$ and the $G$-invariant Riemannian metric 
$( \,\, , \,\, )$ on $G/H$, we use  the following notation. 
We write the decomposition 
${\frak g} = {\frak h}_0 \oplus {\frak h}_1 
\oplus \cdots \oplus {\frak h}_p \oplus 
{\frak m}_1 \oplus \cdots \oplus {\frak m}_q$ 
(resp. ${\frak m} = {\frak m}_1 \oplus \cdots \oplus {\frak m}_q$) 
as ${\frak g} = {\frak w}_0
\oplus {\frak w}_1 \oplus \cdots \oplus {\frak w}_p \oplus {\frak
w}_{p+1} \oplus \cdots \oplus {\frak w}_{p+q}$ 
(resp. ${\frak m} = {\frak w}_{p+1} \oplus
\cdots \oplus {\frak w}_{p+q}$). 

Note that the space of left invariant symmetric covariant
 2-tensors on $G$ which are $\mbox{Ad}(H)$-invariant is given by 
\begin{equation}
\{
v_0\cdot B|_{{\mbox{\footnotesize$ \frak w$}}_0} + v_1\cdot B|_{{\mbox{\footnotesize$ \frak w$}}_1} + \cdots + 
  v_{p+q} \cdot B|_{{\mbox{\footnotesize$ \frak w$}}_{p+q}} 
 \,\, \vert \,\, v_0, v_1, \cdots, v_{p+q} \in {\mathbb R} \}
\label{rr1}
 \end{equation}
and the space of $G$-invariant symmetric covariant
 2-tensors on $G/H$ is given by 
\begin{equation} \{ z_{p+1} \cdot B|_{{\mbox{\footnotesize$ \frak w$}}_{p+1}} + \cdots + 
 z_{p+q} \cdot B|_{{\mbox{\footnotesize$ \frak w$}}_{p+q}} \,\,\vert \,\, \
z_{p+1},\cdots, z_{p+q} \in {\mathbb R}\}. 
\label{rr2}
\end{equation}

In particular, the Ricci tensor $r$ of a left invariant Riemannian 
metric $< \,\, , \,\, > $ on $G$ is a left invariant symmetric
covariant 2-tensor on $G$ which is $\mbox{Ad}(H)$-invariant and
thus $r$ is of the form (\ref{rr1}), and the Ricci tensor ${\bar r}$ 
of a $G$-invariant Riemannian metric on $G/H$ is a $G$-invariant
symmetric covariant 2-tensor on $G/H$, and thus ${\bar r}$ is 
of the form (\ref{rr2}). 

Let $\lbrace e_{\alpha} \rbrace$ be a $B$-orthonormal basis 
adapted to the decomposition of $\frak g$, i.e., 
$e_{\alpha} \in {\frak w}_i$ for some $i$, and
$\alpha < \beta$ if $i<j$ (with $e_{\alpha} \in {\frak w}_i$ and
$e_{\beta} \in {\frak w}_j)$.
 We set ${A^\gamma_{\alpha
\beta}}=B(\left[e_{\alpha},e_{\beta}\right],e_{\gamma})$ so that
$\left[e_{\alpha},e_{\beta}\right]
= \displaystyle{\sum_{\gamma}
A^\gamma_{\alpha \beta} e_{\gamma}}$, and set 
$\displaystyle{k \brack {ij}}=\sum (A^\gamma_{\alpha \beta})^2$, where the sum is
taken over all indices $\alpha, \beta, \gamma$ with $e_\alpha \in
{\frak w}_i,\ e_\beta \in {\frak w}_j,\ e_\gamma \in {\frak w}_k$. 
Then $\displaystyle{k \brack {ij}}$ is independent of the 
$B$-orthonormal bases chosen for ${\frak w}_i, {\frak w}_j, {\frak w}_k$,
and 
\begin{equation}
{k \brack {ij}}\ =\ {k \brack {ji}}\ =\ {j \brack {ki}}.  
 \label{eq3}
\end{equation}
We write a metric on $G$ of the form (\ref{eq2}) as 
\begin{equation}
g =  
y_0\cdot B|_{{\mbox{\footnotesize$ \frak w$}}_0} + y_1\cdot B|_{{\mbox{\footnotesize$ \frak w$}}_1} + \cdots + 
  y_p\cdot B|_{{\mbox{\footnotesize$ \frak w$}}_p} +
y_{p+1} \cdot B|_{{\mbox{\footnotesize$ \frak w$}}_{p+1}} + \cdots + 
 y_{p+q} \cdot B|_{{\mbox{\footnotesize$ \frak w$}}_{p+q}}  \label{eq4}
\end{equation}
where $ y_0, y_1, \cdots, y_{p+q} \in {\mathbb R}_+$,
and a metric on $G/H$ of the form (\ref{eq21}) as 
\begin{equation}
h =  
w_{p+1} \cdot B|_{{\mbox{\footnotesize$ \frak w$}}_{p+1}} + \cdots + 
 w_{p+q} \cdot B|_{{\mbox{\footnotesize$ \frak w$}}_{p+q}}  \label{eq41}
\end{equation}
where $w_{p+1}, \cdots, w_{p+q} \in {\mathbb R}_+$. 

\begin{lmm}\label{ric1}
Let $ d_k= \dim{\frak w}_{k}$. 

{\em(1)} The components $r_0, r_1, \cdots, r_{p+q}$ 
of the Ricci tensor $r$ of the metric $g$ of the
form {\em (\ref{eq4})} on $G$ are given by 
\begin{equation}
r_k = \frac{1}{2y_k}+\frac{1}{4d_k}\sum_{j,i}
\frac{y_k}{y_j y_i} {k \brack {ji}}
-\frac{1}{2d_k}\sum_{j,i}\frac{y_j}{y_k y_i} {j \brack {ki}}
 \quad (k= 0, 1,\ \cdots,\ p+q),   \label{eq5}
\end{equation}
where the sum is taken over all $i, j = 0, 1,\cdots, p+q$. 
Moreover, for each $k$ we have 
$\displaystyle{
\sum_{i,j}{j \brack {ki}} = d_k
}$. 

{\em(2)} The components ${\bar r}_{p+1}, \cdots, {\bar r}_{p+q}$ 
of the Ricci tensor ${\bar r}$ of the metric $h$ of the
form {\em (\ref{eq41})} on $G/H$ are given by 
\begin{equation}
{\bar r}_k = \frac{1}{2w_k}+\frac{1}{4d_k}\sum_{j,i}
\frac{w_k}{w_j w_i} {k \brack {ji}}
-\frac{1}{2d_k}\sum_{j,i}\frac{w_j}{w_k w_i} {j \brack {ki}}
 \quad (k= p+1,\ \cdots,\ p+q),   \label{eq51}
\end{equation}
where the sum is taken over all $i, j = p+1,\cdots, p+q$.
\end{lmm} 

\begin{proof}
Let $\lbrace e_{\alpha}^{(k)}\rbrace_{{\alpha}=1}^{d_k}$ 
be an orthonormal basis of
${\frak w}_k  \,  ( k = 0, 1,\ \cdots,\ p+q)$ with respect to the
inner product $B$.  Put $\displaystyle{X_{\alpha}^{(k)}=
\frac{1}{\sqrt{y_k}} e_{\alpha}^{(k)}}$. Then $\{ X_{\alpha}^{(k)}
\}_{{\alpha}=1}^{d_k}$ is a $< \,\,, \,\,>$-orthonormal basis of
${\frak w}_k  \,  (k=0, 1,\ \cdots,\ p+q)$. The Ricci tensor $r$ of the
metric $g$ is given by the following (cf. \cite{bes}, pp. 184-185
):  
 $$ r(X, X)= - \frac{1}{2}\sum_{\alpha} <\left[X, X_{\alpha}\right], \left[X,
X_{\alpha}\right]> + \frac{1}{2}B(X, X)
+ \frac{1}{4}\sum_{{\alpha},{\beta}}<\left[X_{\alpha}, X_{\beta}\right],X>^2
$$
for $X \in \frak g$, where $\{X_{\alpha}\}$ is an orthonormal basis
of $\frak g$ with respect to the metric $g$.  From the above equation, we
have  that 
\begin{eqnarray*}
r_k  & = & r(X_{\alpha}^{(k)},X_{\alpha}^{(k)})\\
 & = & -\frac{1}{2} \sum_{j,i}
\frac{y_j}{y_k y_i}\sum_{s} 
B(\left[e_{\alpha}^{(k)},e_s^{(i)}\right]_{\mbox{\footnotesize$ \frak w$}_j},
\left[e_{\alpha}^{(k)},e_s^{(i)}\right]_{\mbox{\footnotesize$ \frak w$}_j}) + \frac{1}{2y_k} \\ 
 & {} & + \frac{1}{4}\sum_{j,i} \frac{y_k}{y_j y_i}
\sum_{s,t}B(\left[e_s^{(j)},e_t^{(i)}\right]_{\mbox{\footnotesize$ \frak w$}_k},
e_{\alpha}^{(k)})^2. 
\end{eqnarray*} 
As we have remarked above, 
$$
d_k r_k=  \sum^{d_k}_{\alpha = 1} r(X^{(k)}_{\alpha},
X^{(k)}_{\alpha})  = \frac{d_k}{2 y_k} - \frac{1}{2} \sum_{j,i}
\frac{y_j}{y_k y_i} {j \brack {ki}} + \frac{1}{4}  \sum_{j,i}
\frac{y_k}{y_j y_i} {k \brack {ji}}. $$
\end{proof}
  \section{ Decomposition associated to K\"ahler C-spaces}

Let $G$ be a compact semi-simple Lie group,  $\frak g$ the Lie
algebra of $G$ and $\frak t$ a maximal abelian subalgebra of
$\frak g$. We denote by ${\frak g }^{\mathbb C}_{}$  and ${\frak
t}^{\mathbb C}_{}$ the complexification of $\frak g$ and 
$\frak t$
respectively. We identify an element of the root system $\Delta$ of
 ${\frak g }^{\mathbb C}_{}$ relative to the Cartan subalgebra  
 ${\frak t}^{\mathbb C}_{}$ with an element of $\sqrt{-1}\frak t$ by the
duality defined by the Killing form of ${\frak
g}^{\mathbb C}_{}$. Let $\Pi$ = $\{\alpha^{}_1, \cdots, \alpha^{}_l\}$
be a fundamental system of $\Delta$ and $\{\Lambda^{}_1, \cdots,
\Lambda^{}_l\}$ the fundamental weights of ${\frak g }^{\mathbb C}_{}$ 
corresponding to $\Pi$, that is 
$$\frac{2(\Lambda^{}_i, \alpha^{}_j)}{(\alpha^{}_j, \alpha^{}_j)} =
\delta^{}_{ij} \qquad (1 \le i, j \le \ell).$$
Let $\Pi^{}_0$ be a subset of $\Pi$ and $\Pi - \Pi^{}_0$ =
$\{\alpha^{}_{i_1}, \cdots, \alpha^{}_{i_r}\}$ $(1 \le
\alpha^{}_{i_1} < \cdots <\alpha^{}_{i_r} \le \ell)$. We put
$[\Pi^{}_0] = \Delta\cap\{\Pi^{}_0\}^{}_{\mathbb Z}$ where
 $\{\Pi^{}_0\}^{}_{\mathbb Z}$ denotes the subspace of $\sqrt{-1}\frak
t$ generated by $\Pi^{}_0$. 
Consider the root space decomposition of  ${\frak g }^{\mathbb C}_{}$
relative to  ${\frak t}^{\mathbb C}_{}$:
$$ {\frak g }^{\mathbb C}_{}  =  {\frak t}^{\mathbb C}_{} +
\sum^{}_{\alpha \in \Delta} {\frak g }^{\mathbb C}_{\alpha} .$$
We define a parabolic subalgebra  ${\frak u}^{}_{}$ of  
${\frak g }^{\mathbb C}_{}$ by 
$$\displaystyle  {\frak u}^{}_{}  =  {\frak t}^{\mathbb C}_{} +
\sum^{}_{\alpha \in [\Pi^{}_0]\cup\Delta^+_{}}
{\frak g }^{\mathbb C}_{\alpha},$$
where $\Delta^+_{}$ is the set of all positive roots relative to $\Pi$. 
Note that the nilradical ${\frak n}$ of $ {\frak u}$ is given by 
$${\frak n}  = \sum^{}_{\alpha \in \Delta^+ - [\Pi^{}_0]} {\frak g }^{\mathbb C}_{\alpha} .
$$
We denote by $\widetilde\alpha$ the highest root of ${\frak g }^{\mathbb C}$. 
\medskip

Let $G^{\mathbb C}$ be a simply connected complex semi-simple
Lie group whose Lie algebra is ${\frak g }^{\mathbb C}_{} $ and $U$
the parabolic subgroup of $G^{\mathbb C}$ generated by ${\frak
u}^{}_{}$. Then the complex homogeneous manifold $G^{\mathbb C}/U$
is  compact simply connected and $G$ acts transitively on 
$G^{\mathbb C}/U$. Note also that $H = G\cap U$ is a connected closed subgroup
of $G$,  $G^{\mathbb C}/U$ = $G/H$ as $C^\infty_{}$-manifolds, and
$G^{\mathbb C}/U$ admits a $G$-invariant K\"ahler metric. 

Let $\frak h$ be the Lie algebra of $H$ and ${\frak h}^{\mathbb C}$ the complexification of $\frak h$. Then we have a direct decomposition 
$$\displaystyle {\frak u}^{}_{}  =  {\frak h}^{\mathbb C}_{} \oplus {\frak n}, 
\quad \quad 
\displaystyle {\frak h}^{\mathbb C}_{}  =  {\frak t}^{\mathbb C}_{} +
\sum^{}_{\alpha \in [\Pi^{}_0]} {\frak g }^{\mathbb C}_{\alpha}.$$ 

\begin{propo}{\em(\cite{buraw}, Proposition 4.3)} \label{Burs-Rawn} Let $\frak z$ be the center of the nilpotent Lie algebra $\frak n$. 
Then we have ${\rm ad} ( {\frak h}^{\mathbb C}_{} ) ({\frak z}) \subset {\frak z}$ and 
the action of $ {\frak h}^{\mathbb C}_{}$ on $ {\frak z} $ is irreducible. 
Moreover, the ${\rm ad} ( {\frak h}^{\mathbb C}_{} )$-module ${\frak z} $ is generated by the highest root space ${\frak g }^{\mathbb C}_{\widetilde\alpha}$. 
\end{propo}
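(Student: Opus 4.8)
The plan is to carry everything through the $\mathbb Z$-grading of ${\frak g}^{\mathbb C}$ attached to $\Pi-\Pi_0=\{\alpha_{i_1},\dots,\alpha_{i_r}\}$. For a root $\alpha=\sum_{i=1}^{\ell}c_i(\alpha)\alpha_i\in\Delta$ put $\nu(\alpha)=\sum_{k=1}^{r}c_{i_k}(\alpha)$, the total coordinate of $\alpha$ along the black simple roots, and set ${\frak g}_0={\frak t}^{\mathbb C}+\sum_{\nu(\alpha)=0}{\frak g}^{\mathbb C}_{\alpha}$ and ${\frak g}_d=\sum_{\nu(\alpha)=d}{\frak g}^{\mathbb C}_{\alpha}$ for $d\neq0$. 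Then ${\frak g}^{\mathbb C}=\bigoplus_{d\in\mathbb Z}{\frak g}_d$ with $[{\frak g}_d,{\frak g}_e]\subseteq{\frak g}_{d+e}$; a root has $\nu$-value $0$ exactly when it lies in $[\Pi_0]$, and $\nu$-value $\geq1$ exactly when it is a positive root outside $[\Pi_0]$, so ${\frak g}_0={\frak h}^{\mathbb C}$ and ${\frak n}=\bigoplus_{d\geq1}{\frak g}_d$. Since the highest root $\widetilde\alpha$ dominates every root, $\nu(\beta)\leq\nu(\widetilde\alpha)$ for all $\beta\in\Delta$; write $m=\nu(\widetilde\alpha)$, so that ${\frak g}_m$ is the top nonzero graded piece and ${\frak g}^{\mathbb C}_{\widetilde\alpha}\subseteq{\frak g}_m$.

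First I would settle the easy parts. Because ${\frak h}^{\mathbb C}\subseteq{\frak u}$ and ${\frak n}$ is an ideal of ${\frak u}$, the subspace ${\frak n}$ is ${\rm ad}({\frak h}^{\mathbb C})$-invariant; and for $h\in{\frak h}^{\mathbb C}$, $z\in{\frak z}$, $x\in{\frak n}$ the Jacobi identity gives $[[h,z],x]=[h,[z,x]]-[z,[h,x]]=0$ since $[z,x]=0$ and $[h,x]\in{\frak n}$, so $[h,z]\in{\frak z}$; hence ${\rm ad}({\frak h}^{\mathbb C})({\frak z})\subseteq{\frak z}$. Also ${\frak g}_m\subseteq{\frak z}$, because for $d\geq1$ one has $[{\frak g}_m,{\frak g}_d]\subseteq{\frak g}_{m+d}=0$, so ${\frak g}_m$ centralizes ${\frak n}$.

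The heart of the matter is the reverse inclusion ${\frak z}\subseteq{\frak g}_m$. As ${\frak t}^{\mathbb C}\subseteq{\frak h}^{\mathbb C}$ preserves ${\frak z}$, the space ${\frak z}$ is a sum of root spaces, so ${\frak z}=\bigoplus_{d\geq1}{\frak z}_d$ with ${\frak z}_d={\frak z}\cap{\frak g}_d$ an ${\rm ad}({\frak h}^{\mathbb C})$-submodule of ${\frak g}_d$. If ${\frak z}_d\neq0$, choose a highest weight vector for the reductive algebra ${\frak h}^{\mathbb C}$ inside ${\frak z}_d$; it is a root vector $e_\gamma$ with $\nu(\gamma)=d$ and $\gamma+\alpha_i\notin\Delta$ for every $\alpha_i\in\Pi_0$. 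But each $\alpha_{i_k}\in\Pi-\Pi_0$ lies in ${\frak g}_1\subseteq{\frak n}$, so $[e_\gamma,e_{\alpha_{i_k}}]=0$ because $e_\gamma\in{\frak z}$, i.e.\ $\gamma+\alpha_{i_k}\notin\Delta$. Thus $\gamma+\alpha_i\notin\Delta$ for \emph{every} simple $\alpha_i$, and since in a simple complex Lie algebra $\widetilde\alpha$ is the only root with that property, $\gamma=\widetilde\alpha$ and $d=m$. Together with ${\frak g}_m\subseteq{\frak z}$ this gives ${\frak z}={\frak z}_m={\frak g}_m$. The same estimate yields irreducibility: if $W\subseteq{\frak g}_m$ is any nonzero ${\rm ad}({\frak h}^{\mathbb C})$-submodule, its highest weight vector is a root vector $e_\gamma$ with $\nu(\gamma)=m$ and $\gamma+\alpha_i\notin\Delta$ for $\alpha_i\in\Pi_0$, while for $\alpha_{i_k}\in\Pi-\Pi_0$ the element $\gamma+\alpha_{i_k}$ has $\nu$-value $m+1$ and hence is not a root; so $\gamma=\widetilde\alpha$ and $e_{\widetilde\alpha}\in W$. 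Thus every nonzero submodule of the completely reducible ${\frak h}^{\mathbb C}$-module ${\frak g}_m$ contains $e_{\widetilde\alpha}$, which forces ${\frak g}_m={\frak z}$ to be irreducible and to coincide with the ${\rm ad}({\frak h}^{\mathbb C})$-submodule generated by ${\frak g}^{\mathbb C}_{\widetilde\alpha}$.

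The one ingredient beyond grading bookkeeping and a single use of the Jacobi identity is the root-system fact that in a simple complex Lie algebra the highest root is the unique root $\gamma$ with $\gamma+\alpha_i\notin\Delta$ for all simple roots $\alpha_i$. I expect the identification ${\frak z}={\frak g}_m$ of the center of ${\frak n}$ with the top graded component to be the main point; once it is in place, both the irreducibility of the ${\rm ad}({\frak h}^{\mathbb C})$-action and the statement that ${\frak z}$ is generated by the highest root space ${\frak g}^{\mathbb C}_{\widetilde\alpha}$ follow at once.
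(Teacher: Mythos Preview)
Your proof is correct. The paper does not actually prove this proposition; it simply cites it as Proposition~4.3 of Burstall--Rawnsley \cite{buraw} and uses the result without further argument. So there is no in-paper proof to compare against.

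For what it is worth, your argument via the $\mathbb Z$-grading $\nu$ is essentially the standard one and matches the approach in the cited reference: identify ${\frak z}$ with the top graded piece ${\frak g}_m$ by showing that any ${\frak h}^{\mathbb C}$-highest weight vector in ${\frak z}$ must be the root vector for $\widetilde\alpha$, and then deduce irreducibility and cyclic generation from the uniqueness of that highest weight. The one external fact you invoke---that in a simple complex Lie algebra $\widetilde\alpha$ is the unique root $\gamma$ with $\gamma+\alpha_i\notin\Delta$ for every simple $\alpha_i$---is indeed the key point, and it is equivalent to the irreducibility of the adjoint representation.
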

 Take a Weyl basis $E_{-\alpha} \in {\frak g}^{\mathbb C}_{\alpha}
\,\,(\alpha \in \Delta )$ with
$$\begin{array}{ll} 
\left[E_{\alpha}, E_{-\alpha}\right] &=  -\alpha \,(\alpha \in \Delta)
 \\
{} \left[E_{\alpha}, E_{\beta}\right] &=  
\left\{
 \begin{array}{ll} N_{\alpha,
   \beta}E_{\alpha + \beta}& \mbox{if \quad} \alpha +\beta \in
   \Delta
   \\
   0 & \mbox{if \quad} \alpha +\beta \not\in \Delta, 
 \end{array}
\right.
\end{array}$$
where $N_{\alpha, \beta}$ = 
$N_{-\alpha, -\beta} \in {\mathbb R}.$
Then we have 
$${\frak g} = {\frak t} + \sum_{\alpha \in \Delta} \{
{\mathbb R}(E_{\alpha} + E_{-\alpha}) + {\mathbb R} \sqrt{-1} (E_{\alpha}
- E_{-\alpha})\}$$   
and the Lie subalgebra $\frak h$ is given
by 
$${\frak h} ={\frak t} + \sum_{\alpha \in \left[\Pi_0\right]} \{
{\mathbb R} (E_{\alpha} + E_{-\alpha}) + {\mathbb R}\sqrt{-1} (E_{\alpha}
- E_{-\alpha})\}.
$$
%
 Let $\frak m$ be the orthogonal complement of $\frak h$ in
$\frak g$ with respect to $B$. Then we have $\frak g$ = $\frak h \oplus
\frak m$,  $\left[\,\frak h,\, \frak m\,\right] \subset \frak m$.


From now on we assume that $\frak g$ is simple and 
$\Pi_0 = \Pi-\{\alpha_{i_0}\}$. 
 For a non-negative integer $k$, put
$\Delta_k = \left\{ \alpha \in \Delta^+_{} \, \bigg\vert \ 
\alpha = \sum^{\ell}_{j=1} m_j^{}\alpha_{j}^{}, \  m^{}_{i_0} = k \right\}$. 
 We define a subspace ${\frak n}_k$ of $\frak n$  by 
$${\frak n}_k =\sum_{\alpha \in \Delta_k} {\mathbb C} E_{\alpha}.$$ 
 Set $ t =\max \left\{ m_{i_0} \, \bigg\vert \,
 \alpha = \sum^{\ell}_{j=1} m_j^{}\alpha_{j}^{} \in \Delta^+_{}
\right\}$. 
 Then  ${\frak n}_k$ $(k = 1, \cdots, t)$ are 
$\mbox{ad}({\frak h}^{\mathbb C}_{})$-invariant subspaces,  and $\frak n$ =
$\sum^t_{j=1}{\frak n}^{}_j$ is an irreducible decomposition of
$\frak n$ (\cite{kim}, \cite{wol}).  In particular,  by Proposition 3 we have  that ${\frak z} = {\frak n}_t$.   
We define a subspace ${\frak m}_k$  of $\frak m$  by  
\begin{equation*}{\frak m}_k =\sum_{\alpha \in \Delta_k} \{{\mathbb R} (E_{\alpha} + E_{-\alpha}) + {\mathbb R} \sqrt{-1} (E_{\alpha} - E_{-\alpha})\}. 
\end{equation*} 
Then 
 ${\frak m}_k$ $(k = 1, \cdots, t)$ are
$\mbox{Ad}(H)$-invariant subspaces of $\frak m$  
 and   $\frak m$ =
$\displaystyle\sum^t_{j=1}{\frak m}^{}_j$ is an irreducible decomposition of
$\frak m$, therefore $t = q$.  
%
%
The following proposition is well known.
\begin{propo}{\em(\cite{bor})} \label{B-H}
The K\"ahler C-space $G^{\mathbb C}/U$ $=$ $G/H$ admits a
$G$-invariant K\"ahler-Einstein metric  given by 
\begin{equation}
B |_{{\mbox{\footnotesize$ \frak m$}}_1} + 2 B |_{{\mbox{\footnotesize$ \frak m$}}_2} + \cdots +  q B |_{{\mbox{\footnotesize$ \frak m$}}_q} \label{eq6}.
\end{equation}
\end{propo}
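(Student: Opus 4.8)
The plan is to deduce this from the classical description of the $G$-invariant Kähler structures on a generalized flag manifold (Koszul, Borel--Hirzebruch), the point being that the hypothesis $\Pi_0 = \Pi - \{\alpha_{i_0}\}$ forces the Kähler cone to be a single ray. First I would fix the $G$-invariant complex structure $J$ on $G/H$ determined by the choice $\Delta^+$ of positive roots, under which each ${\frak m}_k$ $(k=1,\dots,q)$ is $J$-stable and $J$ acts on every ${\frak m}_k$ in the "same" way, namely $J(E_\alpha+E_{-\alpha}) = \sqrt{-1}(E_\alpha - E_{-\alpha})$ for $\alpha \in \Delta_k \subset \Delta^+$. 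Then I would recall that the $\mbox{Ad}(H)$-invariant, $J$-compatible closed $2$-forms on $G/H$ are exactly the Kirillov--Kostant--Souriau forms $\omega_\xi$ of the adjoint orbits through $\xi \in \sqrt{-1}{\frak t}$ whose centralizer is ${\frak h}$, i.e.\ through $\xi$ in the span of those fundamental weights $\Lambda_{i_j}$ with $\alpha_{i_j} \in \Pi - \Pi_0$; these $\omega_\xi$ are positive (Kähler) precisely when $\xi$ is a strictly positive combination of those weights. Under our hypothesis this span is the line $\mathbb{R}\Lambda_{i_0}$, so up to homothety $G/H$ carries a unique $G$-invariant Kähler metric.

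Next I would identify that metric with (\ref{eq6}). A metric $\sum_k x_k B|_{{\mbox{\footnotesize$ \frak m$}}_k}$ has Kähler form $\omega(X,Y)=\langle JX,Y\rangle$ whose restriction to ${\frak m}_k$ is $x_k$ times a form that is independent of $k$ (the normalization $[E_\alpha,E_{-\alpha}]=-\alpha$ makes $B(E_\alpha+E_{-\alpha},E_\alpha+E_{-\alpha})$ the same for every root $\alpha$). On the other hand, for $\alpha \in \Delta_k$, writing $\alpha=\sum_j m_j\alpha_j$ with $m_{i_0}=k$, the defining property of the fundamental weights gives $(\Lambda_{i_0},\alpha)=m_{i_0}(\Lambda_{i_0},\alpha_{i_0})=k\,(\alpha_{i_0},\alpha_{i_0})/2$, a quantity depending only on $k$. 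Hence $\omega$ represents a multiple of $\Lambda_{i_0}$ if and only if $x_k$ is proportional to $k$, which is exactly (\ref{eq6}).

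It remains to check that (\ref{eq6}) is Einstein. Its Ricci form $\rho$ is again a $G$-invariant closed $(1,1)$-form, hence lies in $\mathbb{R}\Lambda_{i_0}$; moreover $\rho$ represents, up to a positive constant, the first Chern class of $G/H$, which corresponds to $2\delta_{\frak n}=\sum_{\alpha\in\Delta^+-[\Pi_0]}\alpha = 2\delta-2\delta_{[\Pi_0]}$, where $\delta$ and $\delta_{[\Pi_0]}$ are the half-sums of the positive roots of ${\frak g}^{\mathbb C}$ and of the subsystem $[\Pi_0]$. Using $2(\delta,\alpha_j)/(\alpha_j,\alpha_j)=1$ for each simple root, together with the analogous identity for $\delta_{[\Pi_0]}$ and the simple roots $\alpha_j\in\Pi_0$, one gets $(2\delta-2\delta_{[\Pi_0]},\alpha_j)=0$ for $\alpha_j\in\Pi_0$ and $(2\delta-2\delta_{[\Pi_0]},\alpha_{i_0})>0$ (since $(\delta_{[\Pi_0]},\alpha_{i_0})\le 0$), so $2\delta_{\frak n}$ is a positive multiple of $\Lambda_{i_0}$. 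Therefore $\rho=\lambda\,\omega$ with $\lambda>0$, i.e.\ the metric (\ref{eq6}) satisfies $\mbox{Ric}=\lambda\langle\ ,\ \rangle$.

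The main obstacle is the first step: establishing that the $G$-invariant Kähler metrics on $G/H$ are precisely the positive KKS forms attached to the weights $\Lambda_{i_j}$ with $\alpha_{i_j}\in\Pi-\Pi_0$ --- equivalently, the integrability of the invariant complex structure and the description of which invariant $(1,1)$-forms are closed. This is exactly the Koszul--Borel--Hirzebruch theory cited as \cite{bor}, and once it is in place everything else reduces to the elementary root arithmetic above; alternatively one can realize $G/H$ directly as the coadjoint orbit of $\Lambda_{i_0}$ and invoke the uniqueness of the Kähler--Einstein metric in $c_1$ on a Fano homogeneous space.
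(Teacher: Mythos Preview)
The paper does not give a proof of this proposition; it is simply cited as a classical result from Borel--Hirzebruch \cite{bor} and used as an input for the computation of ${4\brack 33}$ in~(\ref{eq15}). Your sketch is a correct outline of the standard argument behind that citation: parametrize the $G$-invariant K\"ahler forms on $G/H$ by the open cone spanned by $\{\Lambda_{i_j}:\alpha_{i_j}\in\Pi-\Pi_0\}$, observe that under the hypothesis $\Pi_0=\Pi-\{\alpha_{i_0}\}$ this cone is the single ray $\mathbb{R}_{>0}\Lambda_{i_0}$, identify the corresponding metric with~(\ref{eq6}) via $(\Lambda_{i_0},\alpha)=k\,(\alpha_{i_0},\alpha_{i_0})/2$ for $\alpha\in\Delta_k$, and then invoke Koszul's formula expressing the Ricci form through $2\delta_{\frak n}=\sum_{\alpha\in\Delta^+-[\Pi_0]}\alpha$, which lies on the same ray. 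Your side remark that $B(E_\alpha+E_{-\alpha},E_\alpha+E_{-\alpha})$ is independent of $\alpha$ is indeed correct under the paper's normalization $[E_\alpha,E_{-\alpha}]=-\alpha$ (one gets the value $2$ for every root), so that step is fine. In short, there is nothing to compare: the paper defers to \cite{bor}, and you have reproduced the argument that reference supplies.
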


\bigskip

In the following  we consider the case  of $q = 2$, that is  $\frak m$ = ${\frak m}^{}_1 \oplus {\frak m}^{}_2$. 
Then we have a pair $(\Pi, \Pi_{0} )$ which has  an irreducible decomposition  
\begin{equation}
{\frak g}  = {\frak h}_0
\oplus {\frak h}_1 \oplus  {\frak h}_2 \oplus {\frak m}_1 \oplus 
{\frak m}_2 
 \label{neweq6} 
\end{equation}
 as $\mbox{Ad}(H)$-modules, where ${\frak h}_0$ is the center of ${\frak h}$ and ${\frak h}_i$ $( i = 1,2 )$ are simple ideals of ${\frak h}$. 
In such a decomposition of ${\frak g}$, either one of ${\frak h}_1$  and ${\frak h}_2$ is zero or both are non-zero.  

We distinguish K\"ahler C-spaces with $q=2$ into types depending on whether the simple root $\alpha_{i_0}$ separates the Dynkin diagram in one or two components, and whether or not it is connected  to $- \tilde\alpha$.  



\begin{center}

Type Ia  
 
\smallskip

\begin{tabular}{|c|c|c|c|}
\hline 
\begin{picture}(20,30)(0,0)
\put(10, 12){\makebox(0,0){${\frak g}$}}\end{picture}
 &  \begin{picture}(30,30)(0,0)\put(10, 12){
\makebox(0,0){$( \Pi, \Pi^{}_0 ) $}}\end{picture}
& \begin{picture}(30,30)(0,0)\put(15, 12){
\makebox(0,0){\shortstack{$\dim {\frak h}^{}_1$ \\ \\
 $\dim {\frak h}^{}_2$}}}\end{picture}
 &\begin{picture}(30,30)(0,0)\put(15, 12){
\makebox(0,0){\shortstack{$\dim {\frak m}^{}_1$ \\ \\
 $\dim {\frak m}^{}_2$}}}\end{picture}  
\\ 
\hline

\begin{picture}(15,30)(0,8)
\put(10, 25){\makebox(0,0){$C^{}_n$}}\end{picture}
 &

\begin{picture}(130,10)(-25,-15)
\put(-13.5, 0){\circle{6}}
\put(-13.5, 0){\circle{3}}
\put(-10.7, 1.3){\line(1,0){15.5}}
\put(-10.7, -1.3){\line(1,0){15.5}}
\put(1.5, -2.25){\scriptsize $>$}   
\put(9.6, 0){\circle*{4.4}}
\put(10,10){\makebox(0,0){1}}
\put(10, 0){\line(1,0){16}}
\put(28, 0){\circle{4}}
\put(30, 0){\line(1,0){10}}
\put(28,10){\makebox(0,0){2}}
\put(50, 0){\makebox(0,0){$\ldots$}}
\put(60, 0){\line(1,0){10}}

\put(72, 0){\circle{4}}
\put(68, 10){\makebox(0,0){$n-1$}}
\put(77.2, 1.3){\line(1,0){14.6}}
\put(77.2, -1.3){\line(1,0){14.6}}
\put(73.46, -2.25){\scriptsize $<$}
\put(93.5, 0){\circle{4}}
\put(94, 10){\makebox(0,0){$n$}}
\end{picture}
 &

\begin{picture}(110,35)(5,20)
\put(50, 18){
\makebox(8,35){\shortstack{$0$ \\  \\ $(n-1)( 2 n-1 )$}}}\end{picture}
 &
 \begin{picture}(80,35)(5,20)
 \put(30, 18){
\makebox(15,35){\shortstack{$4 (n - 1)$ \\ \\ 
$2$}}}\end{picture}
\\  \hline

 \begin{picture}(15,50)(0,0)
\put(10, 20){\makebox(0,0){$E^{}_6$}}\end{picture}

&
\begin{picture}(160,30)(-25,5)

\put(15, 10){\circle{4}}
\put(17, 10){\line(1,0){14}}
\put(33, 10){\circle{4}}
\put(35, 10){\line(1,0){14}}
\put(51,12){\line(0,1){14}}
\put(51, 10){\circle{4}}
\put(51, 28){\circle*{4.4}}
\put(51,28){\line(0,1){15}}
\put(51, 46){\circle{6}}
\put(51, 46){\circle{3}}
\put(53,10){\line(1,0){14}}
\put(69,10){\circle{4}}
\put(71,10){\line(1,0){14}}
\put(87,10){\circle{4}}
\end{picture}
  & 
\begin{picture}(110,35)(0,0)\put(50, 17){
\makebox(10,10){\shortstack{$0$ \\   \\ \\ $35$}}}\end{picture}
 &
 \begin{picture}(80,35)(0,0)\put(30, 17){
\makebox(10,10){\shortstack{$40$ \\ \\  \\ 
$2$}}}\end{picture}
\\  \hline
\begin{picture}(15,35)(0,0)
\put(10, 13){\makebox(0,0){$E^{}_7$}}\end{picture}

&
\begin{picture}(160,35)(-25, 3)

\put(-3, 10){\circle{6}}
\put(-3, 10){\circle{3}}
\put(0, 10){\line(1,0){14}}
\put(15, 10){\circle*{4.4}}
\put(15, 10){\line(1,0){16}}
\put(33, 10){\circle{4}}
\put(35, 10){\line(1,0){14}}
\put(51,12){\line(0,1){14}}
\put(51, 10){\circle{4}}
\put(51, 28){\circle{4}}
\put(53,10){\line(1,0){14}}
\put(69,10){\circle{4}}
\put(71,10){\line(1,0){14}}
\put(87,10){\circle{4}}
\put(89,10){\line(1,0){14}}
\put(105,10){\circle{4}}
\end{picture}
  & 
\begin{picture}(110,35)(0,0)\put(50, 10){
\makebox(10,10){\shortstack{$0$ \\   \\ \\ $66$}}}\end{picture}
 &
 \begin{picture}(80,35)(0,0)\put(30, 10){
\makebox(10,10){\shortstack{$64$ \\ \\  \\ 
$2$}}}\end{picture}
\\  \hline

\begin{picture}(15,35)(0,0)
\put(10, 13){\makebox(0,0){$E^{}_8$}}\end{picture}

&
\begin{picture}(160,35)(-0, 3)

\put(15, 10){\circle{4}}
\put(17, 10){\line(1,0){14}}
\put(33, 10){\circle{4}}
\put(35, 10){\line(1,0){14}}
\put(51,12){\line(0,1){14}}
\put(51, 10){\circle{4}}
\put(51, 28){\circle{4}}
\put(53,10){\line(1,0){14}}
\put(69,10){\circle{4}}
\put(71,10){\line(1,0){14}}
\put(87,10){\circle{4}}
\put(89,10){\line(1,0){14}}
\put(105,10){\circle{4}}
\put(107,10){\line(1,0){16}}
\put(123,10){\circle*{4.4}}
\put(123,10){\line(1,0){15}}
\put(141,10){\circle{6}}
\put(141,10){\circle{3}}
\end{picture}
  & 
\begin{picture}(110,35)(0,0)\put(50, 10){
\makebox(10,10){\shortstack{$0$ \\   \\ \\ $133$}}}\end{picture}
 &
 \begin{picture}(80,35)(0,0)\put(30, 10){
\makebox(10,10){\shortstack{$112$ \\ \\  \\ 
$2$}}}\end{picture}
\\  \hline
\begin{picture}(15,30)(0,0)
\put(10, 13){\makebox(0,0){$F^{}_4$}}\end{picture}
&
\begin{picture}(160,25)(30, -3)

\put(69,10){\circle{3}}
\put(69,10){\circle{6}}
\put(72,10){\line(1,0){14}}
\put(87,10){\circle*{4.4}}
\put(88,10){\line(1,0){15}}
\put(105,10){\circle{4}}
\put(106.6, 11.3){\line(1,0){12.3}}
\put(106.6, 8.7){\line(1,0){12.3}}
\put(115.5, 7.75){\scriptsize $>$}
\put(124,10){\circle{4}}
\put(126,10){\line(1,0){14}}
\put(142,10){\circle{4}}

\end{picture}
  & 
\begin{picture}(110,25)(0,2)\put(50, 10){
\makebox(10,10){\shortstack{$0$ \\   \\ \\ $21$}}}\end{picture}
 &
 \begin{picture}(80,25)(0,2)\put(30, 10){
\makebox(10,10){\shortstack{$28$ \\ \\  \\ 
$2$}}}\end{picture}
\\  \hline
\begin{picture}(15,30)(0,0)
\put(10, 13){\makebox(0,0){$G^{}_2$}}\end{picture}
&
\begin{picture}(160,25)(30, -3)

\put(87,10){\circle{3}}
\put(87,10){\circle{6}}
\put(90,10){\line(1,0){13.5}}
\put(105,10){\circle*{4.4}}
\put(106.8, 8.7){\line(1,0){12.9}}
\put(106.8,10){\line(1,0){16.1}}
\put(106.8,11.3){\line(1,0){12.9}}
\put(116.5, 7.75){\scriptsize $>$}
\put(125,10){\circle{4}}

\end{picture}
  & 
\begin{picture}(110,25)(0,2)\put(50, 10){
\makebox(10,10){\shortstack{$0$ \\   \\ \\ $3$}}}\end{picture}
 &
 \begin{picture}(80,25)(0,2)\put(30, 10){
\makebox(10,10){\shortstack{$8$ \\ \\  \\ 
$2$}}}\end{picture}
\\  \hline

%
%
%
%
\end{tabular}
\end{center}

The Dynkin diagram  corresponding to $\Pi^{}_0 = \Pi - \{\alpha^{}_{i_0}\}$ with one component,  obtained by removing the vertex $\bullet$, and $\{\alpha^{}_{i_0}\}$ is  next  to the negative of  the maximal root.  

\newpage

\begin{center}
Type Ib   

\smallskip

\begin{tabular}{|c|c|c|c|}
\hline 
\begin{picture}(20,30)(0,0)
\put(10, 12){\makebox(0,0){${\frak g}$}}\end{picture}
 &  \begin{picture}(30,30)(0,0)\put(10, 12){
\makebox(0,0){$( \Pi, \Pi^{}_0 ) $}}\end{picture}
& \begin{picture}(30,30)(0,0)\put(15, 12){
\makebox(0,0){\shortstack{$\dim {\frak h}^{}_1$ \\ \\
 $\dim {\frak h}^{}_2$}}}\end{picture}
 &\begin{picture}(30,30)(0,0)\put(15, 12){
\makebox(0,0){\shortstack{$\dim {\frak m}^{}_1$ \\ \\
 $\dim {\frak m}^{}_2$}}}\end{picture}  
\\ 
\hline


\begin{picture}(15,35)(0,0)
\put(10, 13){\makebox(0,0){$E^{}_7$}}\end{picture}

&
\begin{picture}(160,35)(-25, 3)

\put(-3, 10){\circle{6}}
\put(-3, 10){\circle{3}}
\put(0, 10){\line(1,0){13}}
\put(15, 10){\circle{4}}
\put(17, 10){\line(1,0){14}}
\put(33, 10){\circle{4}}
\put(35, 10){\line(1,0){14}}
\put(51,12){\line(0,1){16}}
\put(51, 10){\circle{4}}
\put(51, 28){\circle*{4.4}}
\put(53,10){\line(1,0){14}}
\put(69,10){\circle{4}}
\put(71,10){\line(1,0){14}}
\put(87,10){\circle{4}}
\put(89,10){\line(1,0){14}}
\put(105,10){\circle{4}}
\end{picture}
  & 
\begin{picture}(110,35)(0,0)\put(50, 10){
\makebox(10,10){\shortstack{$48$ \\   \\ \\ $0$}}}\end{picture}
 &
 \begin{picture}(80,35)(0,0)\put(30, 10){
\makebox(10,10){\shortstack{$70$ \\ \\  \\ 
$14$}}}\end{picture}
\\  \hline

%
\begin{picture}(15,35)(0,0)
\put(10, 13){\makebox(0,0){$E^{}_8$}}\end{picture}
&
\begin{picture}(160,35)(-0, 3)
\put(15, 10){\circle*{4.4}}
\put(15, 10){\line(1,0){16}}
\put(33, 10){\circle{4}}
\put(35, 10){\line(1,0){14}}
\put(51,12){\line(0,1){14}}
\put(51, 10){\circle{4}}
\put(51, 28){\circle{4}}
\put(53,10){\line(1,0){14}}
\put(69,10){\circle{4}}
\put(71,10){\line(1,0){14}}
\put(87,10){\circle{4}}
\put(89,10){\line(1,0){14}}
\put(105,10){\circle{4}}
\put(107,10){\line(1,0){14}}
\put(123,10){\circle{4}}
\put(125,10){\line(1,0){13}}
\put(141,10){\circle{6}}
\put(141,10){\circle{3}}
\end{picture}
  & 
\begin{picture}(110,35)(0,0)\put(50, 10){
\makebox(10,10){\shortstack{$91$ \\   \\ \\ $0$}}}\end{picture}
 &
 \begin{picture}(80,35)(0,0)\put(30, 10){
\makebox(10,10){\shortstack{$128$ \\ \\  \\ 
$28$}}}\end{picture}
\\  \hline
%
%
\begin{picture}(15,30)(0,0)
\put(10, 13){\makebox(0,0){$F^{}_4$}}\end{picture}
&
\begin{picture}(160,25)(30, -3)

\put(69,10){\circle{3}}
\put(69,10){\circle{6}}
\put(72,10){\line(1,0){13}}
\put(87,10){\circle{4}}
\put(89,10){\line(1,0){14}}
\put(105,10){\circle{4}}
\put(106.6, 11.3){\line(1,0){12.3}}
\put(106.6, 8.7){\line(1,0){12.3}}
\put(115.5, 7.75){\scriptsize $>$}
\put(124,10){\circle{4}}
\put(126,10){\line(1,0){16}}
\put(142,10){\circle*{4.4}}
\end{picture}
  & 
\begin{picture}(110,25)(0,2)\put(50, 10){
\makebox(10,10){\shortstack{$21$ \\   \\ \\ $0$}}}\end{picture}
 &
 \begin{picture}(80,25)(0,2)\put(30, 10){
\makebox(10,10){\shortstack{$16$ \\ \\  \\ 
$14$}}}\end{picture}
\\  \hline

%
%
%
%
\end{tabular}
\end{center}

The Dynkin diagram  corresponding to $\Pi^{}_0 = \Pi - \{\alpha^{}_{i_0}\}$ with one component,  obtained by removing the vertex $\bullet$, and $\{\alpha^{}_{i_0}\}$ is not  next  to the negative of  the maximal root.  

\smallskip

\begin{center}

Type IIa  

\smallskip

\begin{tabular}{|c|c|c|c|}
\hline 
\begin{picture}(20,30)(0,0)
\put(10, 12){\makebox(0,0){${\frak g}$}}\end{picture}
 &  \begin{picture}(30,30)(0,0)\put(10, 12){
\makebox(0,0){$( \Pi, \Pi^{}_0 ) $}}\end{picture}
& \begin{picture}(30,30)(0,0)\put(15, 12){
\makebox(0,0){\shortstack{$\dim {\frak h}^{}_1$ \\ \\
 $\dim {\frak h}^{}_2$}}}\end{picture}
 &\begin{picture}(30,30)(0,0)\put(15, 12){
\makebox(0,0){\shortstack{$\dim {\frak m}^{}_1$ \\ \\
 $\dim {\frak m}^{}_2$}}}\end{picture}  
\\ 
\hline 

\begin{picture}(15,35)(0,5)
\put(10, 20){\makebox(0,0){$B^{}_n$}}\end{picture}

 &
\begin{picture}(160,35)(-15,-18)
\put(10, 0){\circle{4}}
\put(10,8){\makebox(0,0){1}}
\put(12, 0){\line(1,0){15}}
\put(28, 0){\circle*{4.4}}
\put(28, 0){\line(1,0){15}}
\put(28, -0){\line(0,-1){13}}
\put(28, -16){\circle{6}}
\put(28, -16){\circle{3}}
\put(28,8){\makebox(0,0){2}}
\put(45, 0){\circle{4}}
\put(47, 0){\line(1,0){10}}
\put(69, 0){\makebox(0,0){$\ldots$}}

\put(81, 0){\line(1,0){10}}
\put(93, 0){\circle{4}}
\put(90, 8){\makebox(0,0){$n-1$}}
\put(94.5, 1.3){\line(1,0){15.5}}
\put(94.5, -1.3){\line(1,0){15.5}}
\put(106.5, -2.25){\scriptsize $>$}
\put(115, 0){\circle{4}}
\put(115, 8){\makebox(0,0){$n$}}
\end{picture}
 & 
\begin{picture}(110,35)(0,5)
\put(50, 15){\makebox(5,16){\shortstack{$3$ \\   
 \\ $(n-2)(2 n - 3)$}}}\end{picture}
 &
 \begin{picture}(85,35)(0, 5)
\put(37, 15){
\makebox(5,16){\shortstack{$4(2 n - 3)$ \\ 
\\ $2$}}}\end{picture}
\\  \hline

\begin{picture}(15,35)(0,5)
\put(10, 20){\makebox(0,0){$D^{}_n$}}\end{picture}
 &
\begin{picture}(160,35)(-15,-17)
\put(10, 0){\circle{4}}
\put(10,8){\makebox(0,0){1}}
\put(12, 0){\line(1,0){15}}
\put(28, 0){\circle*{4.4}}
\put(28, 0){\line(1,0){15}}
\put(28, -0){\line(0,-1){13}}
\put(28, -16){\circle{6}}
\put(28, -16){\circle{3}}
\put(28,8){\makebox(0,0){2}}
\put(45, 0){\circle{4}}
\put(47, 0){\line(1,0){10}}
\put(69, 0){\makebox(0,0){$\ldots$}}
\put(81, 0){\line(1,0){10}}
\put(93, 0){\circle{4}}
\put(82, 8){\makebox(0,0){$n -2$}}
\put(94.7, 1){\line(2,1){10}}
\put(94.7, -1){\line(2,-1){10}}
\put(107.0, 6){\circle{4}}
\put(107.0, -6){\circle{4}}
\put(124, 11){\makebox(0,0){$n-1$}}
\put(110, -14){$n$}
\end{picture}
 &
\begin{picture}(110,35)(5,15)\put(50, 15){
\makebox(8,35){\shortstack{$3$ \\  \\ $(n-2)(2 n-5)$}}}\end{picture}
 &
 \begin{picture}(80,35)(5,15)\put(30, 15){
\makebox(10,35){\shortstack{$8 (n - 2)$ \\ \\ 
$2$}}}\end{picture}
\\  \hline
\end{tabular}
\end{center}

The Dynkin diagram  corresponding to $\Pi^{}_0 = \Pi - \{\alpha^{}_{i_0}\}$ with two components,  obtained by removing the vertex $\bullet$, and $\{\alpha^{}_{i_0}\}$ is  next  to the negative of  the maximal root.  
%
\smallskip

\begin{center}
Type IIb

\smallskip

\begin{tabular}{|c|c|c|c|}
\hline
\begin{picture}(20,30)(0,0)
\put(10, 12){\makebox(0,0){${\frak g}$}}\end{picture}
 &  \begin{picture}(30,30)(0,0)\put(10, 12){
\makebox(0,0){$( \Pi, \Pi^{}_0 ) $}}\end{picture}
& \begin{picture}(30,30)(0,0)\put(15, 12){
\makebox(0,0){\shortstack{$\dim {\frak h}^{}_1$ \\ \\
 $\dim {\frak h}^{}_2$}}}\end{picture}
 &\begin{picture}(30,30)(0,0)\put(15, 12){
\makebox(0,0){\shortstack{$\dim {\frak m}^{}_1$ \\ \\
 $\dim {\frak m}^{}_2$}}}\end{picture}  
\\ 
\hline 

\begin{picture}(15,40)(0,0)
\put(10, 20){\makebox(0,0){$B^{}_n$}}\end{picture}
 &
\begin{picture}(160,40)(-15,-22)
\put(0, 0){\circle{4}}
\put(0,8){\makebox(0,0){1}}
\put(2, 0){\line(1,0){14}}
\put(18, 0){\circle{4}}
\put(20, 0){\line(1,0){10}}
\put(18, -2){\line(0,-1){13}}
\put(18, -18){\circle{6}}
\put(18, -18){\circle{3}}
\put(18,8){\makebox(0,0){2}}
\put(40, 0){\makebox(0,0){$\ldots$}}
\put(50, 0){\line(1,0){10}}
\put(80, -16){\makebox(0,0){$( 3 \leq p \leq n-1 )$}}
\put(60, 0){\circle*{4.4}}
\put(60, 8){\makebox(0,0){$p$}}
\put(60, 0){\line(1,0){10}}
\put(80, 0){\makebox(0,0){$\ldots$}}
\put(90, 0){\line(1,0){10}}
\put(102, 0){\circle{4}}
\put(98, 8){\makebox(0,0){$n-1$}}
\put(103.5, 1.3){\line(1,0){15.5}}
\put(103.5, -1.3){\line(1,0){15.5}}
\put(115.5, -2.25){\scriptsize $>$}
\put(124, 0){\circle{4}}
\put(124, 8){\makebox(0,0){$n$}}
\end{picture}
 & 
\begin{picture}(110,40)(0, 6)
\put(50, 17){\makebox(5,15){\shortstack{$p^2-1$ \\  
 \\ $(n-p)(2(n-p)+1)$}}}\end{picture}
 &
 \begin{picture}(85,40)(0,6)
\put(37, 17){
\makebox(5,15){\shortstack{$2p(2(n - p)+1)$ \\ 
\\ $p(p - 1)$}}}\end{picture}
\\  \hline

\begin{picture}(15,40)(0,0)
\put(10, 20){\makebox(0,0){$C^{}_n$}}\end{picture}
 &

\begin{picture}(160,40)(-30,-20)
\put(-22.5, 0){\circle{6}}
\put(-22.5, 0){\circle{3}}
\put(-19.6, 1.3){\line(1,0){14.6}}
\put(-19.6, -1.3){\line(1,0){14.6}}
\put(-8.5, -2.25){\scriptsize $>$}
\put(0, 0){\circle{4}}
\put(0,8){\makebox(0,0){1}}
\put(2, 0){\line(1,0){14}}
\put(18, 0){\circle{4}}
\put(20, 0){\line(1,0){10}}
\put(18,8){\makebox(0,0){2}}
\put(40, 0){\makebox(0,0){$\ldots$}}
\put(50, 0){\line(1,0){10}}
\put(60, -15){\makebox(0,0){$( 2 \leq p \leq n - 1 )$}}
\put(60, 0){\circle*{4.4}}
\put(60, 8){\makebox(0,0){$p$}}
\put(60, 0){\line(1,0){10}}
\put(80, 0){\makebox(0,0){$\ldots$}}
\put(90, 0){\line(1,0){10}}
\put(102, 0){\circle{4}}
\put(98, 8){\makebox(0,0){$n-1$}}
\put(107.2, 1.3){\line(1,0){14.6}}
\put(107.2, -1.3){\line(1,0){14.6}}
\put(103.46, -2.25){\scriptsize $<$}
\put(123.5, 0){\circle{4}}
\put(124, 8){\makebox(0,0){$n$}}
\end{picture}
 &

\begin{picture}(110,40)(5,15)\put(50, 16){
\makebox(8,35){\shortstack{$p^2-1$ \\  \\ $(n-p)(2(n-p)+1)$}}}\end{picture}
 &
 \begin{picture}(80,40)(5,15)\put(30, 16){
\makebox(15,35){\shortstack{$4 p (n - p)$ \\ \\ 
$p(p + 1)$}}}\end{picture}
\\  \hline

\begin{picture}(15,40)(0,0)
\put(10, 20){\makebox(0,0){$D^{}_n$}}\end{picture}
 &
\begin{picture}(160,40)(-15,-21)
\put(0, 0){\circle{4}}
\put(0,8){\makebox(0,0){1}}
\put(2, 0){\line(1,0){14}}
\put(18, 0){\circle{4}}
\put(18, -2){\line(0,-1){13}}
\put(18, -18){\circle{6}}
\put(18, -18){\circle{3}}
\put(20, 0){\line(1,0){10}}
\put(18,8){\makebox(0,0){2}}
\put(40, 0){\makebox(0,0){$\ldots$}}
\put(50, 0){\line(1,0){10}}
\put(70, -15){\makebox(0,0){$( 3 \leq p \leq n -3 )$}}
\put(60, 0){\circle*{4.4}}
\put(60, 8){\makebox(0,0){$p$}}
\put(60, 0){\line(1,0){10}}
\put(80, 0){\makebox(0,0){$\ldots$}}
\put(90, 0){\line(1,0){10}}
\put(102, 0){\circle{4}}
\put(103.7, 1){\line(2,1){10}}
\put(103.7, -1){\line(2,-1){10}}
\put(115.5, 6){\circle{4}}
\put(115.5, -6){\circle{4}}
\put(132, 11){\makebox(0,0){$n-1$}}
\put(118, -14){$n$}
\end{picture}
 &
\begin{picture}(110,40)(5,24)\put(50, 25){
\makebox(8,35){\shortstack{$p^2-1$ \\  \\ $(n-p)(2(n-p)-1)$}}}\end{picture}
 &
 \begin{picture}(80,40)(5,24)\put(30, 25){
\makebox(10,35){\shortstack{$4 p (n - p)$ \\ \\ 
$p(p - 1)$}}}\end{picture}
\\  \hline

 \begin{picture}(15,40)(0,0)
\put(10, 20){\makebox(0,0){$E^{}_6$}}\end{picture}

&
\begin{picture}(160,45)(-25,5)

\put(15, 10){\circle{4}}
\put(17, 10){\line(1,0){14}}
\put(33, 10){\circle*{4.4}}
\put(35, 10){\line(1,0){14}}
\put(51,12){\line(0,1){13}}
\put(51, 10){\circle{4}}
\put(51, 27){\circle{4}}
\put(51,29){\line(0,1){12}}
\put(51, 44){\circle{6}}
\put(51, 44){\circle{3}}
\put(53,10){\line(1,0){14}}
\put(69,10){\circle{4}}
\put(71,10){\line(1,0){14}}
\put(87,10){\circle{4}}
\end{picture}
  & 
\begin{picture}(110,40)(0,5)\put(50, 20){
\makebox(10,10){\shortstack{$24$ \\   \\ \\ $3$}}}\end{picture}
 &
 \begin{picture}(80,40)(0,5)\put(30, 20){
\makebox(10,10){\shortstack{$40$ \\ \\  \\ 
$10$}}}\end{picture}
\\  \hline

\begin{picture}(15,35)(0,0)
\put(10, 13){\makebox(0,0){$E^{}_7$}}\end{picture}

&
\begin{picture}(160,35)(-25, 3)

\put(-3, 10){\circle{6}}
\put(-3, 10){\circle{3}}
\put(0, 10){\line(1,0){13}}
\put(15, 10){\circle{4}}
\put(17, 10){\line(1,0){14}}
\put(33, 10){\circle{4}}
\put(35, 10){\line(1,0){14}}
\put(51,12){\line(0,1){14}}
\put(51, 10){\circle{4}}
\put(51, 28){\circle{4}}
\put(53,10){\line(1,0){14}}
\put(69,10){\circle{4}}
\put(71,10){\line(1,0){16}}
\put(87,10){\circle*{4.4}}
\put(87,10){\line(1,0){16}}
\put(105,10){\circle{4}}
\end{picture}
  & 
\begin{picture}(110,35)(0,0)\put(50, 10){
\makebox(10,10){\shortstack{$45$ \\   \\ \\ $3$}}}\end{picture}
 &
 \begin{picture}(80,35)(0,0)\put(30, 10){
\makebox(10,10){\shortstack{$64$ \\ \\  \\ 
$20$}}}\end{picture}
\\  \hline
%
%
\end{tabular}
\end{center}

The Dynkin diagram  corresponding to $\Pi^{}_0 = \Pi - \{\alpha^{}_{i_0}\}$ with two components,  obtained by removing the vertex $\bullet$, and $\{\alpha^{}_{i_0}\}$ is not next  to the negative of the maximal root.  


\bigskip

\begin{propo} \label{newprop1} 
In the decomposition \em{(\ref{neweq6})} we can take  
the ideal  ${\frak h}_2$ so that  $\left[{\frak h}_2, {\frak m}_2\right] = \{0\}$.  
\end{propo}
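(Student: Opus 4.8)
The plan is to reduce the statement to a property of the highest root $\widetilde\alpha$ of ${\frak g}$ that can be read off the extended Dynkin diagram. First I would dispose of Types Ia and Ib: there $\Pi_0 = \Pi\setminus\{\alpha_{i_0}\}$ is connected, so the semisimple part of ${\frak h}$ has at most one simple ideal, and we may simply take ${\frak h}_2 = \{0\}$ in (\ref{neweq6}), whence $[{\frak h}_2,{\frak m}_2] = \{0\}$ trivially. So assume we are in Type IIa or IIb, where $\Pi_0 = \Pi_0^{(1)}\cup\Pi_0^{(2)}$ has two connected components, and correspondingly the derived algebra of ${\frak h}^{\mathbb C}$ is ${\frak h}_1^{\mathbb C}\oplus{\frak h}_2^{\mathbb C}$, with ${\frak h}_i^{\mathbb C}$ the simple ideal generated by $\{E_{\pm\beta}\mid\beta\in\Pi_0^{(i)}\}$.

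Next I would rephrase the conclusion complex-linearly. Since $q=2$ we have $t=2$, so ${\frak m}_2\otimes{\mathbb C} = {\frak n}_2\oplus\overline{{\frak n}_2} = {\frak z}\oplus\overline{{\frak z}}$, where ${\frak z} = {\frak n}_2$ is the center of ${\frak n}$. Because ${\frak h}_2$ is a real form of ${\frak h}_2^{\mathbb C}$ and complex conjugation with respect to ${\frak g}$ interchanges ${\frak z}$ and $\overline{{\frak z}}$, it suffices to label the two simple ideals so that $[{\frak h}_2^{\mathbb C},{\frak z}] = \{0\}$. By Proposition \ref{Burs-Rawn}, ${\frak z}$ is an irreducible $\mathrm{ad}({\frak h}^{\mathbb C})$-module containing $E_{\widetilde\alpha}$. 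Taking $[\Pi_0]\cap\Delta^+$ as the positive system of ${\frak h}^{\mathbb C}$, the vector $E_{\widetilde\alpha}$ is a highest weight vector of ${\frak z}$, since $[E_\beta,E_{\widetilde\alpha}] = 0$ for every $\beta\in\Pi_0$ (otherwise $\widetilde\alpha+\beta$ would be a positive root higher than the highest root). As the center ${\frak h}_0^{\mathbb C}$ acts on the irreducible module ${\frak z}$ by a scalar, ${\frak z}$ is already irreducible over ${\frak h}_1^{\mathbb C}\oplus{\frak h}_2^{\mathbb C}$, hence is an outer tensor product $V_1\otimes V_2$ with $V_i$ an irreducible ${\frak h}_i^{\mathbb C}$-module; since $E_{\widetilde\alpha}$ is a highest weight vector for the whole ${\frak h}^{\mathbb C}$, the highest weight of $V_i$ is the restriction of $\widetilde\alpha$ to the Cartan subalgebra of ${\frak h}_i^{\mathbb C}$, and in particular it vanishes if and only if $(\widetilde\alpha,\beta) = 0$ for every $\beta\in\Pi_0^{(i)}$.

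The key point is then the standard fact that, since ${\frak g}$ is simple and not of type $A_\ell$ (as is visible from the tables), the highest root $\widetilde\alpha$ is joined to exactly one simple root $\alpha_{j_0}$ in the extended Dynkin diagram, i.e.\ $(\widetilde\alpha,\alpha) = 0$ for every simple root $\alpha\ne\alpha_{j_0}$. In Type IIa one has $j_0 = i_0\notin\Pi_0$, so $(\widetilde\alpha,\beta) = 0$ for all $\beta\in\Pi_0$; then $V_1$ and $V_2$ are both trivial, ${\frak z} = {\mathbb C}E_{\widetilde\alpha}$, and either labeling works. In Type IIb one has $j_0\ne i_0$, so $\alpha_{j_0}\in\Pi_0$ lies in exactly one component, say $\Pi_0^{(1)}$; labeling the other component $\Pi_0^{(2)}$ and ${\frak h}_2^{\mathbb C}$ the corresponding ideal, every $\beta\in\Pi_0^{(2)}$ satisfies $(\widetilde\alpha,\beta) = 0$, so $V_2$ is the trivial ${\frak h}_2^{\mathbb C}$-module and $[{\frak h}_2^{\mathbb C},{\frak z}] = \{0\}$. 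By the reduction above, $[{\frak h}_2,{\frak m}_2] = \{0\}$.

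I expect the only genuine bookkeeping to be matching the abstract labeling of ${\frak h}_1^{\mathbb C}\oplus{\frak h}_2^{\mathbb C}$ by the two components of $\Pi_0$ with the labeling used in (\ref{neweq6}) and in the tables; the representation theory invoked (Schur's lemma, the outer-tensor decomposition of irreducibles of a product, the highest-weight characterization, and the single-attachment property of $\widetilde\alpha$) is routine. As a cross-check, one can verify the claim case by case from the tables: in each Type IIb case ${\frak m}_2$ is, as an ${\frak h}$-module, an exterior-power-type representation of one simple factor tensored with the trivial representation of the other, which identifies ${\frak h}_2$ explicitly.
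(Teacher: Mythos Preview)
Your argument is correct and rests on the same two ingredients the paper uses: Proposition~\ref{Burs-Rawn} (so that ${\frak z}={\frak n}_2$ is the irreducible ${\frak h}^{\mathbb C}$-module generated by $E_{\widetilde\alpha}$) and the single-attachment property of $\widetilde\alpha$ in the extended Dynkin diagram (so that, after labeling, $(\widetilde\alpha,\beta)=0$ for all $\beta\in\Pi_0^{(2)}$, hence $[{\frak h}_2^{\mathbb C},E_{\widetilde\alpha}]=0$). The only genuine difference is in how you pass from $[{\frak h}_2^{\mathbb C},E_{\widetilde\alpha}]=0$ to $[{\frak h}_2^{\mathbb C},{\frak z}]=0$. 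You invoke the outer tensor product decomposition ${\frak z}\cong V_1\otimes V_2$ over ${\frak h}_1^{\mathbb C}\oplus{\frak h}_2^{\mathbb C}$ and argue that the ${\frak h}_2^{\mathbb C}$-highest weight vanishes, so $V_2$ is trivial. The paper instead uses a one-line Jacobi identity: since ${\frak h}_2^{\mathbb C}$ is an ideal of ${\frak h}^{\mathbb C}$ and ${\frak n}_2$ is generated by $E_{\widetilde\alpha}$ under $\mathrm{ad}({\frak h}^{\mathbb C})$, one has
\[
\bigl[{\frak h}_2^{\mathbb C},[{\frak h}^{\mathbb C},E_{\widetilde\alpha}]\bigr]
\subset\bigl[[{\frak h}_2^{\mathbb C},{\frak h}^{\mathbb C}],E_{\widetilde\alpha}\bigr]
+\bigl[{\frak h}^{\mathbb C},[{\frak h}_2^{\mathbb C},E_{\widetilde\alpha}]\bigr]=\{0\},
\]
and (iterating if needed) $[{\frak h}_2^{\mathbb C},{\frak n}_2]=\{0\}$. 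Your representation-theoretic route gives a bit more information (it identifies ${\frak z}$ as $V_1\otimes\mathbf{1}$ and explains why $\dim{\frak m}_2=2$ in Type~IIa), while the paper's route avoids invoking Schur's lemma and the tensor-product structure altogether. Either way, the reduction to $[{\frak h}_2^{\mathbb C},{\frak z}]=0$ and then to $[{\frak h}_2,{\frak m}_2]=0$ via complex conjugation is the same.
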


\begin{proof}  We may assume that ${\frak h}_2 \neq \{0\}$.  Then there is only one simple root $\alpha_{j_0}$ with $( \alpha_{j_0}, \widetilde\alpha ) \neq 0$ and thus we can take  
the ideal  ${\frak h}_2$ so that  $\left[{\frak h}_2^{\mathbb C}, E_{\widetilde\alpha} \right] = \{0\}$. Since ${\frak n}_2 = [ {\frak h}_{}^{\mathbb C}, E_{\widetilde\alpha}]$, we have  that $\left[{\frak h}_2^{\mathbb C}, {\frak n}_2 \right] = \left[{\frak h}_2^{\mathbb C}, [ {\frak h}_{}^{\mathbb C}, E_{\widetilde\alpha}] \right]  \subset  \left[\left[{\frak h}_2^{\mathbb C},  {\frak h}_{}^{\mathbb C}\right],  E_{\widetilde\alpha} \right] +  \left[{\frak h}_{}^{\mathbb C},  \left[ {\frak h}_{2}^{\mathbb C},  E_{\widetilde\alpha}\right] \right]  = \{0\}$.  By the definition of ${\frak m}_2$, we get the result. 
\end{proof}

In case of the spaces in Table Ia we have that ${\frak h}_1= \{0\}$, and for the
spaces in Table Ib we have  that ${\frak h}_2= \{0\}$.  Also, for the spaces of Tables IIa and IIb we have that  ${\frak h}_1,  {\frak h}_2 \neq \{0\}$


\bigskip 


  \section{ Einstein metrics  on compact Lie groups of type II }
We consider left invariant metrics 
\begin{align}\label{eeq10}
< \,\, , \,\, >  =  
u_0\cdot B|_{{\mbox{\footnotesize$ \frak h$}}_0} + 
u_1\cdot B|_{{\mbox{\footnotesize$ \frak h$}}_1} + 
 u_2\cdot B|_{{\mbox{\footnotesize$ \frak h$}}_2} + x_1\cdot B|_{{\mbox{\footnotesize$ \frak m$}}_1} + 
{x_2}\cdot B|_{{\mbox{\footnotesize$ \frak m$}}_2} 
\end{align}
 on a compact Lie group $G$  associated  to K\"ahler C-spaces of Types  IIa and IIb.  Note that a metric (\ref{eeq10}) is  also ${\rm Ad}(H)$-invariant.


Let $d_1 = \dim{\frak h}_1$, $d_2 = \dim{\frak h}_2$, 
$d_3 = \dim{\frak m}_1$ and $d_4 = \dim{\frak m}_2$. 
By the relations 
$\left[{\frak m}_1, {\frak m}_1\right] \subset {\frak h} + {\frak m}_2$, 
$\left[{\frak m}_2, {\frak m}_2\right] \subset {\frak h}$, 
$\left[{\frak m}_1, {\frak m}_2\right] \subset  {\frak m}_1$,  and 
Proposition \ref{newprop1}, 
we see that 
$\displaystyle{k \brack ij}$ are zero, except $\displaystyle{3 \brack 03}$, 
$\displaystyle{4 \brack 04}$,  $\displaystyle{1 \brack 11}$, $\displaystyle{3 \brack 13}$,  $\displaystyle{4 \brack 14}$,  $\displaystyle{2 \brack 22}$,   $\displaystyle{3 \brack 23}$,   $\displaystyle{4 \brack 33}$.
By Lemma \ref{ric1},
 we have that  
\begin{equation}\label{eq12}
\left\{\begin{array}{l}
\displaystyle{{3 \brack 03} +}  \displaystyle{{4 \brack 04} = 1,
\qquad 
{1 \brack 11} + {3 \brack 13} + {4 \brack 14} = d_1,
\qquad
{2 \brack 22} + {3 \brack 23} = d_2,}
\\  \\
\displaystyle{2 {0 \brack 33}}  \displaystyle{
+ 2 {1 \brack 33} +2 {2 \brack 33} +2 {4 \brack 33}
= d_3, 
\qquad
2 {0 \brack 44} + 2 {1 \brack 44}  + {3 \brack 43} = d_4.}
\end{array}
\right.
\end{equation}

and thus the components of the Ricci tensor $r$
of the metric 
$$< \,\, , \,\, >  =  
u_0\cdot B|_{{\mbox{\footnotesize$ \frak h$}}_0} + u_1\cdot B|_{{\mbox{\footnotesize$ \frak h$}}_1} + 
 u_2\cdot B|_{{\mbox{\footnotesize$ \frak h$}}_2} + 
x_1\cdot B|_{{\mbox{\footnotesize$ \frak m$}}_1} + {x_2}\cdot B|_{{\mbox{\footnotesize$ \frak m$}}_2}$$
 on $G$  are given by: 
\begin{equation}\label{eq13}
\left\{
\begin{array}{ll} 
r_0 &= 
\displaystyle{\frac{u_0}{4\,{x_1}^2} {0 \brack 33} +
\frac{u_0}{4\,{x_2}^2} {0 \brack 44}}
 \\ & \\
r_1 &= 
\displaystyle{\frac{1}{4\,d_1\,u_1} {1 \brack 11} +
\frac{u_1}{4\,d_1\,{x_1}^2} {1 \brack 33} +
\frac{u_1}{4\,d_1\,{x_2}^2} {1 \brack 44}}
\\ & \\
r_2 &= 
\displaystyle{\frac{1}{4\,d_2\,u_2} {2 \brack 22} +
\frac{u_2}{4\,d_2\,{x_1}^2} {2 \brack 33} }
\\ & \\
r_3 &=  \displaystyle{\frac{1}{2x_1} -
\frac{x_2}{2\,d_3\,{x_1}^2} {4 \brack 33}
  - \frac{1}{2\,d_3\,{x_1}^2}\biggl(\;u_0 {0 \brack 33} +
u_1 {1 \brack 33} + u_2 {2 \brack 33}
\;\biggr) }
\\ & \\
r_4 &=  \displaystyle{\frac{1}{x_2} \biggl(\frac{1}{2} - \frac{1}{2\,d_4} {3 \brack 43}
\biggr) +
\frac{x_2}{4\,d_4\,{x_1}^2} {4 \brack 33}
  - \frac{1}{2\,d_4\,{x_2}^2}\biggl(\;u_0 {0 \brack 44} +
u_1 {1 \brack 44}
\;\biggr).}
\end{array}
\right.
\end{equation}

\medskip


We also see that the components of  the Ricci tensor ${\bar r}$ of 
the metric 
$$ ( \ \ , \ \ ) = x^{}_1 B|_{{\mbox{\footnotesize$ \frak m$}}_1} + x^{}_2 B|_{{\mbox{\footnotesize$ \frak m$}}_2}
$$ 
are given by the following:
\begin{equation}\label{eq14}
\left\{\begin{array}{ll} 
{\bar r}_1 &=  \displaystyle{\frac{1}{2x_1} -
\frac{x_2}{2\,d_3\,{x_1}^2} {4 \brack 33}}
\\ & \\
{\bar r}_2 &=  \displaystyle{\frac{1}{x_2}
\biggl(\frac{1}{2} - \frac{1}{2\,d_4} {3 \brack 43}\biggr) +
\frac{x_2}{4\,d_4\,{x_1}^2} {4 \brack 33}.}
\end{array}
\right.
\end{equation}
By Proposition \ref{B-H} the metric 
$ B|_{\mbox{\footnotesize$ \frak m$}_1} + 2 B|_{\mbox{\footnotesize$ \frak m$}_2}$ is K\"ahler-Einstein, and 
thus we have 
$$\frac{1}{2} - \frac{1}{d_3}{4 \brack 33} = 
\frac{1}{2}\biggl(\frac{1}{2} - \frac{1}{2\,d_4} {3 \brack 43}
\biggr) + \frac{1}{2 d_4} {4 \brack 33}.
$$
Thus we get
\begin{equation}\label{eq15}{ 4 \brack 33} = 
\frac{d_3 d_4}{(d_3 + 4 d_4)}.
\end{equation}

\medskip
 We assume that   $\{\alpha^{}_{i_0}\}$ is  not next  to the negative of  the maximal root, and that
 $\{\alpha^{}_{i_0}\}$ separates the extended Dynkin diagram in two components,
which is the case of  Type IIb.   
The case of spaces of Type IIa will be examined in Section 6. 

We set  ${\frak k}= {\frak h} \oplus {\frak m}_2$ and 
 ${\frak k}_1= {\frak h}_0  \oplus {\frak h}_1 \oplus {\frak m}_2$.  
Then  ${\frak k}, {\frak k}_1$ are subalgebras of ${\frak g}$ and   
 ${\frak k} = {\frak k}_1 \oplus {\frak h}_2 $.   We also see that  
 $( {\frak g}, {\frak k} )$ is an irreducible symmetric pair. 
Thus we obtain  an irreducible decomposition ${\frak g} =  {\frak k}_1 \oplus 
{\frak h}_2 \oplus {\frak m}_1$ as
$\mbox{Ad}(K)$-modules, which are mutually non-equivalent. 
We consider the  following left invariant metrics on $G$
which are also $\mbox{Ad}(K)$-invariant:
$$<< \,\,, \,\,>>  =  v_1\cdot B|_{{\mbox{\footnotesize$ \frak k$}}_1} + 
v_2\cdot B|_{{\mbox{\footnotesize$ \frak h$}}_2} + v_3\cdot B|_{{\mbox{\footnotesize$ \frak m$}}_1}.
$$
 
Note that  the only non-zero  $\displaystyle{\left[\!{k \brack ij}\!\right]}$
 are  
$$\left[\!{1 \brack 11}\!\right], \left[\!{1 \brack 33}\!\right], 
\left[\!{2 \brack 22}\!\right],\left[\!{2 \brack 33}\!\right].$$
Let $f_1 = \dim {\frak k}_1$, $f_2 = \dim {\frak h}_2$ and
$f_3 = \dim {\frak m}_1$.           
 By  Lemma \ref{ric1}(1)  the components of the Ricci tensor ${\widetilde r}$ of the metric 
$ v_1\cdot B|_{{\mbox{\footnotesize$ \frak k$}}_1} + v_2\cdot B|_{{\mbox{\footnotesize$ \frak h$}}_2} + 
v_3\cdot B|_{{\mbox{\footnotesize$ \frak m$}}_1}$  on $G$ are given by the following:  
\begin{equation}\label{eq16}
\left\{
\begin{array}{ll} 
{\widetilde r}_1 &= 
\displaystyle{\frac{1}{4\,f_1\,v_1} \left[\!{1 \brack 11}\!\right] +
\frac{v_1}{4\,f_1\,{v_3}^2} \left[\!{1 \brack 33}\!\right]}
 \\ & \\
{\widetilde r}_2 &= 
\displaystyle{\frac{1}{4\,f_2\,v_2} \left[\!{2 \brack 22}\!\right] +
\frac{{v}_2}{4\,f_2\,{{v}_3}^2} \left[\!{2 \brack 33}\!\right]}
\\ & \\
{\widetilde r}_3 &=  \displaystyle{\frac{1}{2{v}_3} - \frac{1}{2\,f_3\,{{v}_3}^2}
\biggl(\;
{v}_1 \left[\!{1 \brack 33}\!\right] + {v}_2 \left[\!{2 \brack 33}\!\right]
\;\biggr). }
\end{array}
\right.
\end{equation}

Note that equations (\ref{eq16}) are obtained from equations (\ref{eq13})
 by setting ${v}_1 = u_0 = u_1 = {x_2}$, ${v}_2 = u_2$ and ${v}_3 = x_1$. 
 In fact, for these values the metrics  $< \ \, \ \ >$ and $<< \ \, \ \ >>$ on $G$ coincide, so the  components of the corresponding Ricci tensors are equal. 
Therefore, it follows that 

 \begin{equation}\label{eqq17}
\left\{\begin{array}{ll} 
\displaystyle{\frac{1}{4\,f_1} \left[\!{1 \brack 11}\!\right]}
 & = \displaystyle{\frac{1}{4} {0 \brack 44} = 
\frac{1}{4 d_1}({1 \brack 11} + 
{1 \brack 44})}
\\ & \\ &
\displaystyle{= \frac{1}{2} - \frac{1}{2 d_4}({0 \brack 44} + 
{1 \brack 44} + {4 \brack 33})}
\\ & \\
\displaystyle{\frac{1}{4\,f_1}\left[\!{1 \brack 33}\!\right]}
 & \displaystyle{ = \frac{1}{4} {0 \brack 33} = 
\frac{1}{4 d_1}{1 \brack 33} =  
\frac{1}{4 d_4} {4 \brack 33}}.
\end{array}
\right.
\end{equation}
From (\ref{eq12}), (\ref{eq15}) and (\ref{eqq17}) we obtain:  
\begin{lmm} \label{lemma4}  
For the metric $< \ \ , \ \ >$ on $G$,  the non-zero numbers $\displaystyle{
{k \brack ij}} $ are given as follows:
$$\begin{array}{llll} 
\displaystyle{
{0 \brack 33}} = &
\displaystyle{\frac{d_3}{(d_3 + 4 d_4)}}
\quad &
\displaystyle{{0 \brack 44} = }&
\displaystyle{\frac{4 d_4}{(d_3 + 4 d_4)}}
\\ & \\
\displaystyle{
{1 \brack 11} = }
&
\displaystyle{\frac{2 d_4 (2 d_1 + 2 - d_4)}{(d_3 + 4 d_4)}}
\quad &
\displaystyle{ {1 \brack 33} = }&
\displaystyle{\frac{d_1 d_3}{(d_3 + 4 d_4)}}
\\ & \\
\displaystyle{
{ 1 \brack 44} = }& 
\displaystyle{\frac{2 d_4 (d_4 - 2)}{(d_3 + 4 d_4)}
}\quad &
\displaystyle{ { 2  \brack 22} =} &
\displaystyle{d_2 - \frac{ d_3 (d_3 + 2 d_4 - 2 d_1 -2)}{2 (d_3 + 4 d_4)}}
\\ & \\
\displaystyle{{ 2 \brack 33}=} & 
\displaystyle{\frac{ d_3 (d_3 + 2 d_4 - 2 d_1 -2)}{2 (d_3 + 4 d_4)}
\quad }&
\displaystyle{{ 4 \brack 33}} = &
\displaystyle{\frac{d_3 d_4}{(d_3 + 4 d_4)}}. 
\end{array}
$$
\end{lmm}

Thus we have 

\begin{prop} \label{prop5}
The components of the Ricci tensor $r$
of the metric 
$$< \,\, , \,\, >  =  
u_0\cdot B|_{{\mbox{\footnotesize$ \frak h$}}_0} + u_1\cdot B|_{{\mbox{\footnotesize$ \frak h$}}_1} + 
 u_2\cdot B|_{{\mbox{\footnotesize$ \frak h$}}_2} + 
x_1\cdot B|_{{\mbox{\footnotesize$ \frak m$}}_1} + {x_2}\cdot B|_{{\mbox{\footnotesize$ \frak m$}}_2}$$ 
on $G$ 
are given by 
\begin{equation}\label{eq18}
\left\{
\begin{array}{ll} 
r_0 &= 
\displaystyle{\frac{u_0}{4\,{x_1}^2}
\displaystyle{\frac{d_3}{(d_3 + 4 d_4)}} +
\frac{u_0}{{x_2}^2}} \displaystyle{\frac{d_4}{(d_3 + 4 d_4)}},
  \\ & \\
r_1 &= 
\displaystyle{\frac{1}{2\,d_1\,u_1} 
\frac{d_4 (2 d_1 + 2 - d_4)}{(d_3 + 4 d_4)} +
\frac{u_1}{4\,{x_1}^2} 
\displaystyle{\frac{d_3}{(d_3 + 4 d_4)}} +
\frac{u_1}{2\,d_1\,{x_2}^2} 
\frac{ d_4 (d_4 - 2)}{(d_3 + 4 d_4)}},
 \\ & \\
r_2 &= 
\displaystyle{\frac{1}{4\,d_2\,u_2}
\displaystyle{(d_2 - \frac{ d_3 (d_3 + 2 d_4 - 2 d_1 -2)}
{2 (d_3 + 4 d_4)})} +
\frac{u_2}{4\,d_2\,{x_1}^2}
\frac{ d_3 (d_3 + 2 d_4 - 2 d_1 -2)}{2 (d_3 + 4 d_4)} },
\\ & \\
r_3 &=  \displaystyle{\frac{1}{2x_1} -
\frac{x_2}{2\,{x_1}^2} 
\frac{d_4}{(d_3 + 4 d_4)}} 
\\ & 
  - \displaystyle{\frac{1}{2\,{x_1}^2}}
\biggl(\;u_0 \displaystyle{\frac{1}{(d_3 + 4 d_4)}} +
u_1 \displaystyle{\frac{d_1}{(d_3 + 4 d_4)} +
 u_2 \frac{ (d_3 + 2 d_4 - 2 d_1 -2)}{2 (d_3 + 4 d_4)}
\;\biggr) },
\\ & \\
r_4 &=  \displaystyle{\frac{1}{x_2} 
\frac{2 d_4}{(d_3 + 4 d_4)}}
 +
\frac{x_2}{4\,{x_1}^2}
\displaystyle{\frac{d_3}{(d_3 + 4 d_4)}}
  - \frac{1}{{x_2}^2}\biggl(\;u_0 \frac{2}{(d_3 + 4 d_4)} +
u_1 \frac{d_4 - 2}{d_3 + 4 d_4}
\;\biggr).
\end{array}
\right.
\end{equation}
\end{prop}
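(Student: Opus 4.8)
The plan is to derive each of the five formulas in (\ref{eq18}) by substituting the explicit values of the symbols $\displaystyle{k \brack ij}$ furnished by Lemma \ref{lemma4} into the corresponding expression in (\ref{eq13}) and then simplifying. No new geometric input is needed: Lemma \ref{ric1} has already reduced the Ricci tensor of a metric of the form (\ref{eeq10}) on a K\"ahler C-space of Type IIb to the formulas (\ref{eq13}), which involve only the eight potentially nonzero symbols listed there, and Lemma \ref{lemma4} evaluates every one of them. Hence the proof is purely a matter of algebraic bookkeeping.

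First I would record, using the symmetry relation (\ref{eq3}), the identification $\displaystyle{3 \brack 43} = \displaystyle{4 \brack 33}$, which is the only place where (\ref{eq3}) is invoked nontrivially; all the other symbols appearing in (\ref{eq13}), namely $\displaystyle{0 \brack 33}$, $\displaystyle{0 \brack 44}$, $\displaystyle{1 \brack 11}$, $\displaystyle{1 \brack 33}$, $\displaystyle{1 \brack 44}$, $\displaystyle{2 \brack 22}$, $\displaystyle{2 \brack 33}$ and $\displaystyle{4 \brack 33}$, occur verbatim in Lemma \ref{lemma4}. Then I would treat the components one at a time. For $r_0$ and $r_1$ the substitution is immediate, the only simplification being the cancellation of a factor $d_1$ in $r_1$ and the combination $\frac14 \cdot \frac{4 d_4}{d_3 + 4 d_4} = \frac{d_4}{d_3 + 4 d_4}$ in $r_0$; for $r_2$ one inserts $\displaystyle{2 \brack 22}$ and $\displaystyle{2 \brack 33}$ with no further manipulation; for $r_3$ the factor $d_3$ present in $\displaystyle{4 \brack 33}$, $\displaystyle{0 \brack 33}$, $\displaystyle{1 \brack 33}$ and $\displaystyle{2 \brack 33}$ cancels against the $d_3$ in the denominators of (\ref{eq13}).

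The one component requiring a short computation is $r_4$: substituting $\displaystyle{3 \brack 43} = \displaystyle{4 \brack 33} = \frac{d_3 d_4}{d_3 + 4 d_4}$ into the coefficient $\frac12 - \frac{1}{2 d_4}\displaystyle{3 \brack 43}$ of $1/x_2$ gives $\frac12 - \frac{d_3}{2(d_3 + 4 d_4)} = \frac{2 d_4}{d_3 + 4 d_4}$, and the remaining two terms of $r_4$ then simplify by cancelling $d_4$ just as before. Assembling the five results produces (\ref{eq18}). The computation presents no real difficulty; the only thing to watch is keeping track of which permutation of indices is meant in each bracket symbol and not dropping the numerical factors $\frac14$ and $\frac12$ that distinguish the $r_k$ in (\ref{eq13}) from the quantities $d_k r_k$ appearing in Lemma \ref{ric1}.
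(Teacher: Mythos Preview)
Your proposal is correct and follows exactly the paper's approach: the paper simply writes ``Thus we have'' after Lemma~\ref{lemma4} and states Proposition~\ref{prop5}, indicating that one just substitutes the bracket values of Lemma~\ref{lemma4} into the general Ricci formulas~(\ref{eq13}). Your write-up is in fact more detailed than the paper's, spelling out the cancellations and the one nontrivial use of the symmetry~(\ref{eq3}).
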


\medskip

Now a metric  
\begin{center}
$< \,\, , \,\, >  =  
u_0\cdot B|_{{\mbox{\footnotesize$ \frak h$}}_0} + u_1\cdot B|_{{\mbox{\footnotesize$ \frak h$}}_1} + 
 u_2\cdot B|_{{\mbox{\footnotesize$ \frak h$}}_2} + 
x_1\cdot B|_{{\mbox{\footnotesize$ \frak m$}}_1} + {x_2}\cdot B|_{{\mbox{\footnotesize$ \frak m$}}_2}$
 \end{center}
on $G$ is  Einstein  if and only if there exists a positive solution $\{u_0, u_1,
u_2, x_1, {x_2}, e \}$ of the system of equations  
\begin{equation} \label{19}
r_0 = e, \quad r_1 = e,\quad r_2 = e, \quad r_3 = e, \quad r_4 = e.
\end{equation}

 We normalize  the system of equations by putting $x_1 = 1$. 
From (\ref{eq18}),  we have 
\begin{align}
 -4\,{d_4}\,{u_0} + 4\,&({d_3} + 4\,{d_4})\,e\,{x_2}^2  -
{d_3}\,{u_0}\,{x_2}^2 = 0,  \label{20}
\\
2{d_4}(2\, - \,{d_4}&)\,{u_1}^2 - 
 2{d_4}(2\, + 2\,{d_1} - \,{d_4})\,{x_2}^2
  \label{21}  
\\
+  4\,& {d_1}\,({d_3} + 4\,{d_4})\,e\,{u_1}\,{x_2}^2
- {d_1}\,{d_3}\,{{{u_1}}^2}\,{x_2}^2  = 0, \nonumber     
\\
 -2\,{d_3} - 2\,{d_1}\,&{d_3}  - 2\,{d_2}\,{d_3} + 
 {{{d_3}}^2} - 8\,{d_2}\,{d_4} + 2\,{d_3}\,{d_4}   \label{22} \\
+  8\,&{d_2}\,({d_3} + 4 \,{d_4})\, e\,u_2  +
{d_3}\,(2 + 2\,{d_1} - {d_3} -
2\,{d_4})\,{{u_2}^2} = 0,    \nonumber
\\
-2\,{d_3} - 8\,{d_4}& +  4\,({d_3} + 4\,{d_4})\,e + 
 2\,{u_0} + 2\,{d_1}\,{u_1}    \label{23} 
\\  + (&-2 \,  - 2\,{d_1} + {d_3} + 2\,{d_4})\,u_2 + 
 2\,{d_4}\,{x_2}\ = 0,  \nonumber 
\\
8\,{u_0} - 4\,(2 - & {d_4})\,{u_1} -  8\,{d_4}\,{x_2} + 
  4\,({d_3} + 4\,{d_4})\,e\,{x_2}^2 - 
 {d_3}\,{x_2}^3 = 0.  \label{24}
\end{align}

 By solving the linear equations (\ref{20}), (\ref{23}) and 
(\ref{24}) with respect to $u_0, u_1$ and $e$, we obtain that 
\begin{align}
u_0 = \,  &
({x_2}^2\,( -8\,d_3 - 32\,d_4 + 4\,d_3\,d_4 + 16\,{{d_4}^2} + 
( -8 - 8\,d_1 + 4\,d_3 + 12\,d_4 + 4\,d_1\,d_4 \label{eq25}
\\ - \,  & 
2\,d_3\,d_4 - 4\,{{d_4}^2})\,u_2 + ( 8\,d_4 - 
  8\,d_1\,d_4 - 4\,{{d_4}^2} )\,{x_2} - d_1\,d_3\,{x_2}^3)) /  (8(-2 + \,{{d_4}}) d_4\nonumber
\\  + \,  & 
 (-8 - 8\,d_1 - 4\,d_3 + 4\,d_4 - 
 4\,d_1\,d_4 + 2\,d_3\,d_4 ) \,{x_2}^2 - d_1\,d_3\,{x_2}^4), \nonumber
 \end{align}
\begin{align}
u_1 =  \, &   \label{eq26}
({x_2}\, ( -32\,{{d_4}^2} + 4\,( 2 + d_4) \, ( 2\,d_3 + 8\,d_4 + 2\,u_2 + 
 ( 2\,d_1 - d_3 - 2\,d_4) \,u_2) \,{x_2}  \\  
-\,  &
  4\,d_4\,( 8 + 3\,d_3 + 2\,d_4)\,{x_2}^2 + d_3\,( 2\,d_3 + 8\,d_4 + 
   ( 2 + 2\,d_1 - d_3 - 2\,d_4) \,u_2)\,{x_2}^3   \nonumber 
\\
- \, &
 d_3\,( 2 + d_3 + 2\,d_4)\,{x_2}^4) )/ 
(2\,( 8\,( 2 - d_4) \,d_4 + ( 8 + 8\,d_1 + 4\,d_3 - 4\,d_4   
 \nonumber 
\\ + \,  &
4\,d_1\,d_4 - 2\,d_3\,d_4) \,{x_2}^2 +  d_1\,d_3\,{x_2}^4)),  \nonumber
\end{align}
\begin{align}
e =  \, & (( 4\,d_4 + d_3\,{x_2}^2)\,( 8\,d_3 + 32\,d_4 - 
 4\,d_3\,d_4 - 16\,{{d_4}^2} + ( 8 + 8\,d_1 - 4\,d_3 -  12\,d_4 - 4\,d_1\,d_4 \label{eq27}
\\  + \, & 
  2\,d_3\,d_4 + 4\,{{d_4}^2}) \,u_2 + 
( -8\,d_4 + 8\,d_1\,d_4 +  4\,{{d_4}^2}) \,{x_2} +  \nonumber 
d_1\,d_3\,{x_2}^3) )/ (4\,( d_3 + 4\,d_4) 
\\  \times \, & (8\,( 2 - d_4) \,d_4
 + ( 8 + 8\,d_1 + 4\,d_3 - 4\,d_4 +  4\,d_1\,d_4 - 2\,d_3\,d_4)\,{x_2}^2 +   
 d_1\,d_3\,{x_2}^4)).  \nonumber
 \end{align}


From (\ref{21}), (\ref{eq26}) and (\ref{eq27}),   
we get a quadratic equation with respect to $u_2$. 
By solving this equation with respect to $u_2$, we get  

\begin{equation}\label{eq28}
u_2  = \frac{(4\,d_4 - ( 2\,d_3 + 8\,d_4)\,{x_2} + 
( 2 + 2\,d_1 + d_3 + 2\,d_4) \,{x_2}^2)}
{(( 2 + 2\,d_1 - d_3 - 2\,d_4)\,{x_2})}
\end{equation} 
or 
\begin{align}
&  u_2  = 
 - \big(128\,( -2 + d_4) \,{{d_4}^2}\,(4 + 4\,d_1 - 4\,d_4 - 2\,d_1\,d_4 - 3\,{{d_4}^2})     +64\,( -2 + d_4) \, d_4\,( 2 + d_4)  \label{eq29} 
\\ \times \, & ( 2 + 2\,d_1 + d_4) \,( d_3 + 4\,d_4) \,{x_2} +  32\,d_4\,( -16 - 32\,d_1 - 16\,{{d_1}^2} - 8\,d_3 - 8\,d_1\,d_3 + 24\,d_1\,d_4 \nonumber 
\\  + \,& 
40\,d_4     -  16\,{{d_1}^2}\,d_4 + 12\,d_3\,d_4 + 8\,d_1\,d_3\,d_4 + 12\,{{d_4}^2} - 
   4\,{{d_1}^2}\,{{d_4}^2} + 10\,d_3\,{{d_4}^2} - 10\,d_1\,d_3\,{{d_4}^2} \nonumber 
\\  - \, & 10\,{{d_4}^3} - 2\,d_1\,{{d_4}^3} - 
 7\,d_3\,{{d_4}^3} - 2\,{{d_4}^4}) \,{x_2}^2 +   \nonumber 
 32\,d_3\,d_4\,( d_3 + 4\,d_4) \,( -4 + 2\,d_1 + 3\,d_1\,d_4 + {{d_4}^2})  \nonumber 
\\  \times  &  {x_2}^3 
+ 8\,d_3\,d_4\,( -16\,d_1 - 16\,{{d_1}^2} + 32\,d_4 - 24\,d_1\,d_4 - 
 8\,{{d_1}^2}\,d_4 + 10\,d_3\,d_4   -  16\,d_1\,d_3\,d_4  \nonumber 
\\  - \,
&
 8\,{{d_4}^2} - 8\,d_1\,{{d_4}^2} - 
  5\,d_3\,{{d_4}^2} - 4\,{{d_4}^3}) \,{x_2}^4 
+ 4\,{{d_3}^2}\, ( d_3 + 4\,d_4) \,( 4\,d_1 - 2\,d_4 +  6\,d_1\,d_4 + {{d_4}^2}) \,{x_2}^5  \nonumber 
\\ + \, 
&
 2\,{{d_3}^2}\,d_4\,(4 - 28\,d_1 - 4\,{{d_1}^2} + 2\,d_3 - 10\,d_1\,d_3 + 2\,d_4 - 10\,d_1\,d_4  -  d_3\,d_4 - 2\,{{d_4}^2}) \,{x_2}^6 \nonumber  
\\  + \,
& \left. 
   2\,d_1\,{{d_3}^3}\,(d_3 + 4\,d_4)\, 
 {x_2}^7 -  d_1\,{d_3}^3\,(2 + d_3 + 2\,d_4) \,{x_2}^8 \right) /  \big( ( 2 + 2\,d_1 - d_3 -  2\,d_4) \,{x_2} \nonumber 
\\  \times  
&
( 2 + 2\,d_1 - d_3 -  2\,d_4) \,{x_2}\,( 8 + 4\,d_4 + d_3\,{x_2}^2) \,( 8\,( -2 + d_4) \,d_4\,     ( 2 + 2\,d_1 + d_4) \nonumber 
\\  +  &   2\,d_3\,d_4\,( -2 + 4\,d_1 + d_4) \,{x_2}^2 +  d_1\,{d_3}^2\,{x_2}^4)\big), 
\nonumber
\end{align}
provided 
\begin{equation}
{d_1}{d_3}{x_2}^4
 + 2(4 + 4{d_1} + 2{d_3} - 2{d_4} + 2{d_1}{d_4} - {d_3}{d_4}){x_2}^2 -
 8\,d_4\,(-2 + {d_4}) \neq 0. \label{except}
\end{equation}

\bigskip

\begin{propo}\label{prop4}
If a left invariant metric $< \,\, , \,\, >$ of the form {\em (\ref{eeq10})} on $G$ for Type IIb
is naturally reductive  with respect to $G\times L$ for some closed subgroup $L$ of $ G$, 
then one of the following holds: 

{\em 1) } $x_1 = x_2$,  \,   {\em  2)}  $u_0
= u_1 = x_2$,  \ 
{\em 3)}  $u_0 = u_1 = u_2 = x_1 = x_2$, that is  {\em (\ref{eeq10})} is a bi-invariant metric.

Conversely, 
{\em 1) }  if $x_1 = x_2$, then the metric $< \,\, , \,\, >$ is given by 
 $ 
u_0\cdot B|_{{\mbox{\footnotesize$ \frak h$}}_0}$ $ + $ $
u_1\cdot B|_{{\mbox{\footnotesize$ \frak h$}}_1} $ $ + $ $ 
 u_2\cdot B|_{{\mbox{\footnotesize$ \frak h$}}_2} $ $ + $ $
x_1\cdot B|_{{\mbox{\footnotesize$ \frak m$}}_1 
\oplus {\mbox{\footnotesize$ \frak m$}}_2}
$ and is naturally reductive  with respect to $G\times H$, and  {\em  2)} if $u_0
= u_1 = x_2$, then the metric $< \,\, , \,\, >$ is given by 
 $ 
u_0\cdot B|_{{\mbox{\footnotesize$ \frak h$}}_0 
\oplus {\mbox{\footnotesize$ \frak h$}}_1 \oplus {\mbox{\footnotesize$ \frak m$}}_2
}$ $ + $  $ 
 u_2\cdot B|_{{\mbox{\footnotesize$ \frak h$}}_2} $ $ + $ $
x_1\cdot B|_{{\mbox{\footnotesize$ \frak m$}}_1 }
$ and is naturally reductive  with respect to $G\times K$, 
where the Lie algebra $ \frak k$ is given by $({\frak h}_0
\oplus {\frak h}_1 \oplus {\frak m}_2) \oplus {\frak h}_2$.
 \end{propo}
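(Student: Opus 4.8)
The plan is to deduce everything from the classification theorem of D'Atri and Ziller (\cite{dazi}) quoted above: a left-invariant metric on the compact simple group $G$ is naturally reductive with respect to some $G\times L$ if and only if there is a closed subgroup $L$ of $G$ with Lie algebra ${\frak l} = {\frak l}_0\oplus{\frak l}_1\oplus\cdots\oplus{\frak l}_s$ (${\frak l}_0$ the center, ${\frak l}_1,\ldots,{\frak l}_s$ the simple ideals) such that the metric has the shape $x\cdot B|_{{\frak l}^\perp}+A_0|_{{\frak l}_0}+\sum_{i\ge1}v_i\cdot B|_{{\frak l}_i}$ of {\em (\ref{eq1})}, where ${\frak l}^\perp$ is the $B$-orthogonal complement of ${\frak l}$ in ${\frak g}$. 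As a preliminary step I would record the structure of ${\frak k}={\frak h}\oplus{\frak m}_2$: it is a subalgebra, $({\frak g},{\frak k})$ is the irreducible symmetric pair underlying the twistor fibration, and ${\frak h}_2$ and ${\frak k}_1={\frak h}_0\oplus{\frak h}_1\oplus{\frak m}_2$ are ideals of ${\frak k}$ --- indeed $[{\frak h}_2,{\frak m}_2]=0$ by Proposition \ref{newprop1}, so $B([{\frak m}_2,{\frak m}_2],{\frak h}_2)=B({\frak m}_2,[{\frak m}_2,{\frak h}_2])=0$ and hence $[{\frak m}_2,{\frak m}_2]\subseteq{\frak h}_0\oplus{\frak h}_1$. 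Since $\dim{\frak h}_0=1$, one reads off from the Type IIb tables that ${\frak k}_1$, of dimension $1+\dim{\frak h}_1+\dim{\frak m}_2$, is a simple Lie algebra in every case, so ${\frak k}={\frak k}_1\oplus{\frak h}_2$ is exactly the decomposition of ${\frak k}$ into simple ideals.

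For the converse I would simply exhibit the subgroup. If $x_1=x_2$, then {\em (\ref{eeq10})} equals $u_0 B|_{{\frak h}_0}+u_1 B|_{{\frak h}_1}+u_2 B|_{{\frak h}_2}+x_1 B|_{{\frak m}_1\oplus{\frak m}_2}$; since ${\frak m}_1\oplus{\frak m}_2={\frak h}^\perp$ and ${\frak h}={\frak h}_0\oplus{\frak h}_1\oplus{\frak h}_2$ is the center-plus-simple-ideals decomposition of ${\frak h}$, this has the shape of {\em (\ref{eq1})} with $L=H$, so it is naturally reductive with respect to $G\times H$. If $u_0=u_1=x_2$, then {\em (\ref{eeq10})} equals $u_0 B|_{{\frak k}_1}+u_2 B|_{{\frak h}_2}+x_1 B|_{{\frak m}_1}$; since ${\frak m}_1={\frak k}^\perp$ and ${\frak k}={\frak k}_1\oplus{\frak h}_2$, this has the shape of {\em (\ref{eq1})} with $L=K$, so it is naturally reductive with respect to $G\times K$, whose Lie algebra is $({\frak h}_0\oplus{\frak h}_1\oplus{\frak m}_2)\oplus{\frak h}_2$.

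For the forward implication, suppose {\em (\ref{eeq10})} is naturally reductive with respect to some $G\times L$; by D'Atri and Ziller it equals $x B|_{{\frak l}^\perp}+A_0|_{{\frak l}_0}+\sum_{i\ge1}v_i B|_{{\frak l}_i}$ for some closed subgroup $L$. Let $\phi$ be the $B$-symmetric endomorphism of ${\frak g}$ representing {\em (\ref{eeq10})}; each of its eigenspaces is a direct sum of some of the five modules ${\frak h}_0,{\frak h}_1,{\frak h}_2,{\frak m}_1,{\frak m}_2$, on each of which $\phi$ acts as a scalar. Since $\phi$ acts as the scalar $x$ on ${\frak l}^\perp$, as the scalar $v_i$ on each simple ideal ${\frak l}_i$, and as $A_0$ on ${\frak l}_0$ (which is $B$-orthogonal to $[{\frak l},{\frak l}]$, hence $\phi$-invariant), each of ${\frak l}^\perp,{\frak l}_0,{\frak l}_1,\ldots,{\frak l}_s$ sits inside a single $\phi$-eigenspace. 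Combining this with the bracket relations $[{\frak m}_1,{\frak m}_1]\subseteq{\frak h}+{\frak m}_2$, $[{\frak m}_2,{\frak m}_2]\subseteq{\frak h}$, $[{\frak m}_1,{\frak m}_2]\subseteq{\frak m}_1$, $[{\frak h},{\frak m}_i]\subseteq{\frak m}_i$ and Proposition \ref{newprop1} --- in particular with the facts that $[{\frak m}_1,{\frak m}_1]={\frak k}$ and $[{\frak m}_2,{\frak m}_2]={\frak h}_0\oplus{\frak h}_1$ (each being the derived algebra of an irreducible symmetric pair) and that ${\frak m}_1,{\frak m}_2$ are non-abelian --- I would narrow ${\frak l}$ down to one of the finitely many subalgebras that are sums of these modules: $\{0\}$, ${\frak h}_0$, ${\frak h}_1$, ${\frak h}_2$, ${\frak h}_0\oplus{\frak h}_1$, ${\frak h}_0\oplus{\frak h}_2$, ${\frak h}_1\oplus{\frak h}_2$, ${\frak h}$, ${\frak k}_1$, ${\frak k}$, ${\frak g}$. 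Imposing finally that $\phi$ be a constant multiple of $B$ on ${\frak l}^\perp$ and on each simple ideal of ${\frak l}$, I would run through the list: ${\frak l}={\frak g}$ gives the bi-invariant metric (case 3); if ${\frak l}$ is $\{0\}$, ${\frak h}$, or any listed subalgebra of ${\frak h}$, then both ${\frak m}_1$ and ${\frak m}_2$ lie in ${\frak l}^\perp$, forcing $x_1=x_2$ (case 1); and if ${\frak l}$ is ${\frak k}$ or ${\frak k}_1$, then the metric is a constant multiple of $B$ on the simple ideal ${\frak k}_1={\frak h}_0\oplus{\frak h}_1\oplus{\frak m}_2$, forcing $u_0=u_1=x_2$ (case 2).

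I expect the main obstacle to be the step pinning ${\frak l}$ to this finite list. The eigenspace argument controls ${\frak l}^\perp$ and the semisimple part $[{\frak l},{\frak l}]$ of ${\frak l}$ directly, but the center ${\frak l}_0$ carries the a priori arbitrary metric $A_0$, so one must still exclude the possibility that ${\frak l}_0$ --- and with it ${\frak l}$ --- is a subspace straddling several of the modules ${\frak h}_0,{\frak h}_1,{\frak h}_2,{\frak m}_1,{\frak m}_2$. This is precisely where one uses that $\dim{\frak h}_0\le1$ (so the $\mbox{Ad}(H)$-fixed directions in ${\frak g}$ form at most a line), together with the non-abelianness of ${\frak m}_1,{\frak m}_2$ and the explicit brackets; the remaining verification is then an elementary if somewhat tedious finite check, and everything after it is bookkeeping with the dimensions $\dim{\frak h}_i$, $\dim{\frak m}_j$ from the Type IIb table.
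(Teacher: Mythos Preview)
Your converse direction is essentially the paper's: both invoke D'Atri--Ziller's Theorem directly with $L=H$ or $L=K$. For the forward direction, however, your route diverges from the paper and carries a genuine gap at the step where you ``narrow $\frak l$ down'' to one of your listed subalgebras. The eigenspace argument only tells you that $\frak l^\perp$ and each simple ideal $\frak l_i$ lie inside $\phi$-eigenspaces, but when parameters coincide an eigenspace is a sum of several of the five modules, and a simple ideal of $\frak l$ could then sit diagonally inside such a sum rather than equal a sum of modules. You flag the analogous difficulty for the center $\frak l_0$, but the same obstruction already applies to the semisimple part, and the case analysis needed to close it (over all coincidence patterns of $u_0,u_1,u_2,x_1,x_2$) is neither carried out nor clearly finite in the way you suggest.

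The paper sidesteps this entirely by not attempting to identify $\frak l$. It splits according to whether $\frak l\subset\frak h$. If $\frak l\subset\frak h$, then $\frak l^\perp\supset\frak m_1\oplus\frak m_2$, and the D'Atri--Ziller form forces $x_1=x_2$. If $\frak l\not\subset\frak h$, let $\frak k$ be the subalgebra generated by $\frak l$ and $\frak h$: the metric is $\mbox{Ad}(H)$-invariant by its form and $\mbox{Ad}(L)$-invariant by natural reductivity, hence $\mbox{Ad}(K)$-invariant. Since $\frak k$ is $\mbox{ad}(\frak h)$-stable and strictly contains $\frak h$, irreducibility and mutual non-equivalence of $\frak m_1,\frak m_2$ force $\frak k$ to contain one of them; the bracket relations (and $[\frak m_1,\frak m_1]\cap\frak m_2\neq 0$) then give $\frak k=\frak g$ (bi-invariant case) or $\frak k=\frak h\oplus\frak m_2$. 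In the latter case $\mbox{Ad}(K)$-invariance and the simplicity of the ideal $\frak k_1=\frak h_0\oplus\frak h_1\oplus\frak m_2$ (which you correctly establish) yield $u_0=u_1=x_2$ by Schur. The key idea you are missing is thus to \emph{enlarge} $\frak l$ to $\langle\frak l,\frak h\rangle$ and exploit $\mbox{Ad}$-invariance, rather than to pin $\frak l$ down.
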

 
\begin{proof}   Let ${\frak l}$ be the Lie algebra of  $L$. Then we have that either ${\frak l} \subset {\frak h}$  or ${\frak l} \not\subset {\frak h}$. 
First we consider the case when  ${\frak l} \not\subset {\frak h}$. Let ${\frak k}$ be the subalgebra of ${\frak g}$ generated by ${\frak l}$ and ${\frak h}$. 
Since ${\frak g} = {\frak h}_0 \oplus {\frak h}_1 \oplus  {\frak h}_2 \oplus {\frak m}_1 \oplus  {\frak m}_2 $ is an irreducible decomposition as $\mbox{Ad}(H)$-modules, we see that the Lie algebra $\frak k$  contains ${\frak m}_1$ or ${\frak m}_2$. 
 Note that 
$\left[{\frak m}_1, {\frak m}_1\right] \subset {\frak h} \oplus {\frak m}_2$, 
$\left[{\frak m}_1, {\frak m}_1\right] \cap {\frak m}_2 \neq \{0\}$,
$\left[{\frak m}_2, {\frak m}_2\right] \subset {\frak h}$ and 
$\left[{\frak m}_1, {\frak m}_2\right] \subset  {\frak m}_1$. 
If $\frak k$ contains ${\frak m}_1$, then $\frak k$ also contains ${\frak m}_2$, and hence $\frak k = \frak g$. Thus the metric is bi-invariant. 
If $\frak k$ contains ${\frak m}_2$, then $\frak k = {\frak h}_0
\oplus {\frak h}_1 \oplus  {\frak h}_2 \oplus {\frak m}_2$. 
Put ${\frak k}_1 = {\frak h}_0
\oplus {\frak h}_1 \oplus {\frak m}_2$. Then 
$\frak k = {\frak k}_1 \oplus {\frak h}_2$ is an ideal decomposition of simple ideals. Thus we have that
$u_0 = u_1 = x_2$. 

Now we consider the case of ${\frak l} \subset {\frak h}$.  Since the  orthogonal complement
 ${\frak l}^{\bot}$ of ${\frak l}$ with respect to $B$ contains the  orthogonal complement 
${\frak h}^{\bot}$ of ${\frak h}$, we see that ${\frak l}^{\bot} \supset {\frak m}_1 \oplus  {\frak m}_2$.  
By Theorem 1,  
since the  invariant metric $< \,\, , \,\, >$ is naturally reductive  with respect to $G\times L$,  
 it follows that  $x_1 = x_2$. 
The converse is a direct consequence of Theorem 1.
\end{proof}
 

If  $u_2$ is given by (\ref{eq28}), then   from  (\ref{eq25}),  (\ref{eq26}) and  (\ref{eq27}) and by  using a computer algebra system, 
we see that  $$u_0 = {x_2},  \quad u_1 = {x_2}, \quad 
e = (4\,d_4 + d_3\,{x_2}^2)/(4 (d_3 + 4\,d_4) x_2).$$  Thus by Proposition \ref{prop4} 
 the metric $< \,\, , \,\, >$  is naturally reductive with respect to $G\times K$.  
 Note that $G/K$ is an irreducible symmetric space and these Einstein metrics $< \,\, , \,\, >$ have been studied by D'Atri-Ziller \cite{dazi}.

\medskip

Therefore from now on  we consider the case when $u_2$ is given by  (\ref{eq29}).


1) Case $G$ is of $B_n$-type. 

We consider the case of  $n \geq 5$ and $p = 3$. 
Then we have that  $d_1 = 8$, $d_2 = (n - 3)\cdot (2\,n - 5)$, 
$d_3 = 2\cdot 3\cdot(2\,n - 5)$ and $d_4 = 3\cdot 2$. From 
(\ref{eq29}), (\ref{eq25}), (\ref{eq26}) and (\ref{eq27})  
we obtain   that
\begin{align}
 u_2 \,  =  \, &  
 (-512 + 256\,( -1 + 2\,n )\,{x_2} - 32\,( -75 + 38\,n )\,{x_2}^2   \label{30} 
\\ + \, &
192\,( -5 + 2\,n )\,( -1 + 2\,n ) \,{x_2}^3 - 6\,( -5 + 2\,n )\,( -125 + 74\,n ) \,{x_2}^4 
\nonumber
\\ + \,  &
 43\,{(-5 + 2\,n )}^2\,(-1 + 2\,n)\,{x_2}^5 - {(-5 + 2\,n)}^2\,(-94 + 63\,n)\,{x_2}^6   \nonumber
\\ + \,  &
 3\,{(-5 + 2\,n)}^3\,(-1 + 2\,n)\,{x_2}^7 - {(-5 + 2\,n)}^3\,(-4 + 3\,n)\,{x_2}^8)/  \nonumber
(( -3 + n ) \,{x_2}\,
\\ \times&
( 16 + 3\,( -5 + 2\,n )\,{x_2}^2)\,( 16 + 9\,( -5 + 2\,n )\,{x_2}^2 + 
{( -5 + 2\,n ) }^2\,{x_2}^4)), \nonumber
\end{align}
\begin{align}
 u_1  = \,  & 
({x_2}\,( 16 + 3\,( -5 + 2\,n ) \,{x_2}^2)) /(16 + 9\,( -5 + 2\,n ) \,{x_2}^2 + 
{{( -5 + 2\,n ) }^2}\,{x_2}^4), \label{31}
\\  \nonumber
&  \nonumber
\\
u_0  =  \, & 
({x_2}\,( 256 + 240\,( -5 + 2\,n )\,{x_2}^2 + 51\,{{( -5 + 2\,n )}^2}\,{x_2}^4 + 
 3\,{{( -5 + 2\,n )}^3}\,{x_2}^6 ) )   \label{32}
\\ / & 
(( 16 + 3\,( -5 + 2\,n )\,{x_2}^2)\,( 16 + 9\,( -5 + 2\,n ) \,{x_2}^2 + 
{{( -5 + 2\,n ) }^2}\,{x_2}^4 )),    \nonumber 
\\  \nonumber
&  \nonumber
\\
e  = \,  & 
(( 4 + ( -5 + 2\,n )\,{x_2}^2)\,( 256 + 240\,( -5 + 2\,n ) \,{x_2}^2 + 
51\,{{( -5 + 2\,n )}^2}\,{x_2}^4   \label{33}
\\  + \, &
3\,{{( -5 + 2\,n )}^3}\,{x_2}^6))/  \nonumber
(4\, ( -1 + 2\,n )\,{x_2}\,( 16 + 3\,( -5 + 2\,n ) \,{x_2}^2)
\\ \times &
 ( 16 + 9\,( -5 + 2\,n )\,{x_2}^2 + {{( -5 + 2\,n )}^2}\,{x_2}^4)).  \nonumber
\end{align}


From  (\ref{22}),  (\ref{30}) and (\ref{33}), 
we get the following equation for  ${x_2}$ : 
\begin{align} - \,  
& 524288\,n  +262144\, (3+n ) (-1+2\,n ){x_2}+ 
 65536\, (27  +49\,n-43\,{n}^{2}-2\,{n}^{3} ){x_2}^{2}  {\label{eq34}}
\\
+ \, & 16384\, (-1 +2\,n )(-345+31\,n+62\,{n}^{2} ){x_2}^{3} \notag
\\
+ \, & 2048\, (-6480 -1023\,n+8284\,{n}^{2}-\notag2332\,{n}^{3}-192\,{n}^{4} ){x_2}^{4} \notag
\\ 
+ \, & 2048\, (-5+2\,n ) (-1+2\,n )(-1605+191\,n+382\,{n}^{2} ){x_2}^{5} \notag
\\
+ \, & 256\, (-5+2\,n ) (-30240+5521\,n+30262\,{n}^{2}-9444\,{n}^{3}-920\,{n}^{4} ){x_2}^{6} \notag
\\
+ \, & 64\, (-5+2\,n)^{2} (-1+2\,n ) (-15567+2449\,n+4898\,{n}^{2} ){x_2}^{7} \notag
\\
+ \, & 8\, (-5+2\,n)^{2} (-274320+71119\,n + 269348\,{n}^{2}-89124\,{n}^{3}-9024\,{n}^{4} ){x_2}^{8} \notag
\\
+ \, & 12\, (-5+2\,n)^{3} (-1+2\,n ) (-14017+2967\,n+5934\,{n}^{2} ){x_2}^{9} \notag
\\
+ \, & (-5+2\,n )^{3} (-300735+34744\,n+377253\,{n}^{2}-126480\,{n}^{3}-12004\,{n}^{4}){x_2}^{10} \notag
\\
+ \, & 3\, (-5+2\,n )^{4} (-1+2\,n)(-5155+1539\,n+3078\,{n}^{2} ){x_2}^{11} \notag
\\
+ \, & 3\,(-5+2\,n )^{4} (-4442-4565\,n + 13645\,{n}^{2}-4422\,{n}^{3}-344\,{n}^{4} ){x_2}^{12} \notag
\\
+ \, & 6\, (-5+2\,n )^{5} (-1+2\,n ) (-113+53\,n+106\,{n}^{2} ){x_2}^{13} \notag
\\
+ \, & 3\, (-5+2\,n )^{5} (311-830\,n+845\,{n}^{2}-252\,{n}^{3}-12\,{n}^{4} ){x_2}^{14} \notag
\\
+ \, & 9\, (1+n)(-5+2\,n )^{6} (-1+2\,n )^{2}{x_2}^{15} \notag
\\
+ \, & 3\, (-5+2\,n )^{6} (-4+3\,n ) (-7+5\,n-2\,{n}^{2} ){x_2}^{16} = 0. 
\notag
\end{align}

We denote by $f(x_2)$ the left-hand side of  
  equation (\ref{eq34}). We consider the values of $f(x_2)$ at $x_2 = 1$ and $x_2 = 17/10$. Using a computer algebra system,
we see that  
$$ f(1) = 
8\,( -7 + 2\,n ) \,( -5 + 2\,n ) \,{( 1 + 2\,n )}^2\,( 6 + 17\,n + 7\,{n^2} - 
40\,{n^3} + 12\,{n^4}) 
$$
which is positive if $ n \geq 4$, and by expanding $f(17/10)$ as a function of $n$ into series at $n = 5$, we see that 
\begin{align*}  & f(17/10) = (
 -2375459471975900057437500 - 37434767070688128502678125\,(-5 + n) \\ & -103651929030368084523415625\,{{\left( -5 + n \right) }^2} - 
  131885489711907058331076250\,{{\left( -5 + n \right) }^3} \\ & - 95931514181594436085898500\,{{\left( -5 + n \right) }^4} - 
  43274741600923805795069960\,{{\left( -5 + n \right) }^5} \\ & - 
12373465769695851958925104\,{{\left( -5 + n \right) }^6} - 2189118636501094094792672\,{{\left( -5 + n \right) }^7}  \\ & - 
 219129014907392089654464\,{{\left( -5 + n \right) }^8} - 
  9504591553625063640192\,{{\left( -5 + n \right) }^9})/10^{16}, 
\end{align*}
which is negative if $ n \geq 5$.  Thus we see that, for 
$ n \geq 5$, the equation  $f(x_2) = 0$ has a solution 
$ x_2 = x_2^0$ between $1 <  x_2 < 17/10$. 

We claim  that  the solution $x_2^0$ of $f(x_2) = 0$  with $1 <  x_2^0 < 17/10$ 
satisfies the property (\ref{except}).  
We denote by $q(x_2)$ the left-hand side of (\ref{except}). 
Then we have 
$$q(x_2) = 48\,(-5 + 2\,n)\,{{x_2}^4} + 2\,(120 - 24\,(-5 + 2\,n ))\,{{x_2}^2} -192$$ and
$$q'(x_2) = 192 {x_2} \left((2 n -5){x_2}^2 -(n-5)\right) > 0.$$ 
Thus $q(x_2)$ is monotone increasing for $x_2 \geq 1$.  Since $q(1) =48$, we see that 
 $q(x_2) \geq 48$ for $x_2 \geq 1$. 


Hence, we obtain a solution $\{ u_0, u_1, u_2, x_1, x_2, e \} = 
\{ u_0^0, u_1^0, u_2^0, 1, x_2^0, e^0 \}$ of equations (\ref{19})  
from  (\ref{30}), (\ref{31}), (\ref{32}) and (\ref{33}). 
It is obvious that 
$ u_0^0 > 0 , u_1^0 > 0, e^0 > 0 $ 
from (\ref{31}), (\ref{32}) and (\ref{33}). 

Now  we claim that $u_2^0 > 0$. From  (\ref{30}),  it is enough to show that the  numerator
\begin{align} 
  h(x_2) = & 
 -512 + 256\,( -1 + 2\,n ) \,{x_2} - 32\,( -75 + 38\,n ) \,{x_2}^2  \label{35}
 \\
 +  \, & 192\,( -5 + 2\,n )\,( -1 + 2\,n )\,{x_2}^3   \nonumber 
  +  ( -125 + 74\,n ) \,{x_2}^4  \\
+ \, &
43\,{( -5 + 2\,n ) }^2\,( -1 + 2\,n ) \,{x_2}^5 - {( -5 + 2\,n ) }^2\,
 ( -94 + 63\,n ) \,{x_2}^6  \nonumber
\\  + \, &
 3\,{( -5 + 2\,n )}^3\,( -1 + 2\,n ) \,{x_2}^7 - {( -5 + 2\,n ) }^3\,
 ( -4 + 3\,n ) \,{x_2}^8  \nonumber
\end{align}  of $u_2$ is positive for $1 <  x_2 < 17/10$.  By expanding $ h(x_2)$  into series at $x_2 = 1$, we see that
\begin{align} & h(x_2) = 
-(2 n-5)^3 (3 n-4)
   ({x_2} -1)^8-(2 n-5)^3
   (18 n-29)
   ({x_2} -1)^7  \nonumber
   \\ & -(2 n-5)^2
   \left(84 n^2-329
   n+361\right)
   ({x_2}-1)^6-4 (2 n-5)^2
   \left(21 n^2-60 n+71\right)
   ({x_2} -1)^5  \nonumber
    \\ & 
   -  (2 n-5)
   \left(330 n^2-1021
   n+850\right)
   ({x_2} -1)^4  \nonumber
    \\ & 
   +3 (2 n-5)
   \left(56 n^3-388 n^2+726
   n-489\right)
   ({x_2} -1)^3 \nonumber
    \\ & 
   +\left(336
   n^4-2468 n^3+5956 n^2-5433
   n+1155\right)
   ({x_2} -1)^2
    \nonumber
    \\ & 
+\left(144
   n^4-784 n^3+1148 n^2-232
   n-226\right)
   ({x_2} -1)+2 \left(12
   n^4-40 n^3+7 n^2+21
   n-4\right).  \nonumber
 \end{align}
Using that $0 \leq (x_2 -1 )^4 \leq (7/10)^4$, we see that 
 \begin{align}
  h (x_2)  \geq  \,  &  
-(2 n-5)^3 (3 n-4)
   (7/10)^{8}-(2 n-5)^3
   (18 n-29)
   ({x_2} -1)^3  (7/10)^{4}  {\label{eq38}}
   \\ & -(2 n-5)^2
   \left(84 n^2-329
   n+361\right)
   ({x_2}-1)^2 (7/10)^{4} \nonumber
  \\ &
    -4 (2 n-5)^2
   \left(21 n^2-60 n+71\right)
   ({x_2} -1) (7/10)^{4}  \nonumber
    \\ & 
   -  (2 n-5)
   \left(330 n^2-1021
   n+850\right)  (7/10)^{4} 
  \nonumber
    \\ & 
   +3 (2 n-5)
   \left(56 n^3-388 n^2+726
   n-489\right)
   ({x_2} -1)^3 \nonumber
    \\ & 
   +\left(336
   n^4-2468 n^3+5956 n^2-5433
   n+1155\right)
   ({x_2} -1)^2
    \nonumber
    \\ & 
+\left(144
   n^4-784 n^3+1148 n^2-232
   n-226\right)
   ({x_2} -1)  \nonumber
    \\ & 
   +2 \left(12
   n^4-40 n^3+7 n^2+21
   n-4\right). \nonumber
    \end{align}
We denote by $K( x_2)$ the right-hand side of   inequality (\ref{eq38}).  
By using a computer algebra system we see that 
\begin{align}
& 
K( x_2) =    \frac{ ({x_2} -1)^3}{10^4}(2 n-5) \left(1507128 (n-5)^3+12109796
   (n-5)^2+27370330
   (n-5) \right.
   \nonumber \\ + & \left. 9568475 \right) +  
   \frac{({x_2} -1)^2}{10^4}\left(2553264 (n-5)^4+33578676
   (n-5)^3+155942816
   (n-5)^2 \right.  \nonumber \\  
   +  &  300412905
   (n-5)^{ }_{}  + 194919600 \big)
     \nonumber  +    \frac{({x_2} -1)}{2500}\left(158316 (n-5)^4+2790980
   (n-5)^3  \right.
   \\ + & 
 \left. 16163691
   (n-5)^2+37902330
   (n-5)+30517600\right)
   \nonumber  + 
   \frac{1}{10^8}\left(
2261644776 (n-5)^4  \right.
   \\ + &  \left. 22608433332
   (n-5)^3+85946990890
   (n-5)^2+141092427975
   (n-5)+67673648625 \right), \nonumber
    \end{align}
    hence $K( x_2)$ is positive for  $1 <  x_2 < 17/10$ and $n \geq 5$. 







\bigskip

2) Case  $G$ is of $C_n$-type. 

We consider the case of  $n \geq 3$ and $p = 2$. 
Then we have that  $d_1 = 3$, $d_2 = (n - 2)\cdot (2\,n - 3)$, 
$d_3 = 4\cdot 2\cdot(n - 2)$ and $d_4 = 3\cdot 2$. From 
(\ref{eq29}), (\ref{eq25}), (\ref{eq26}) and (\ref{eq27})  
we obtain   that
\begin{align} 
  u_2  &  =    (-912 + 448\,( 1 + n )\,{x_2} - 4\,( -397 + 256\,n ) \, {x_2}^2 + \label{37}
368\,( -2 + n ) \,( 1 + n ) \,{x_2}^3 
\\   - &  24\,( -2 + n ) \,
 ( -19 + 17\,n ) \,{x_2}^4 + \nonumber
 96\,{{( -2 + n ) }^2}\,( 1 + n ) \,{x_2}^5
\\    -  &
{( -2 + n )}^2\,( -47 + 68\,n ) \,{x_2}^6+ \nonumber
 8\,{{( -2 + n ) }^3}\, ( 1 + n ) \,{x_2}^7 - {{( -2 + n ) }^3}\,
 ( -1 + 4\,n ) \,{x_2}^8) \nonumber
 \\ &  /  
(2\,( -3 + 2\,n ) 
 \,{x_2}\,( 4 + ( -2 + n ) \,{x_2}^2 )\,
( 14 + 8\,( -2 + n ) \, {x_2}^2 + {{( -2 + n ) }^2}\,{x_2}^4) ), \nonumber
\\  \nonumber
&  \nonumber
\\
u_1 &  =  ({x_2}\, ( 4 + ( -2 + n ) \,{x_2}^2))/(14 + 8\,( -2 + n ) \,{x_2}^2+ 
{{( -2 + n ) }^2}\,{x_2}^4), \label{38}
\\  \nonumber
&  \nonumber
\\
u_0 &  =
  ({x_2}\, ( 4560\,( -2 + n ) \,{x_2}^2 + 14\,{{( -2 + n ) }^2}\,
 {x_2}^4 + {{( -2 + n ) }^3}\,{x_2}^6) )  \label{39}
\\ &/
(( 4 +  ( -2 + n ) \,{x_2}^2 )  \,( 14 + 8\,( -2 + n ) \,{x_2}^2 + 
{{( -2 + n ) }^2}\,{x_2}^4) ),   \nonumber
\\  \nonumber
&  \nonumber
\\
e = & (( 3 +  ( -2 + n ) \,{x_2}^2 )  \,( 76 + 60\,( -2 + n ) \, {x_2}^2 + 
   14\,{{( -2 + n ) }^2}\,{x_2}^4 + {{( -2 + n ) }^3}\,{x_2}^6) ) \label{40} 
\\ &/
(4\,( 1 + n )\,{x_2}\,(4 +( -2 + n )\,{x_2}^2 )\,(14 + 8\,( -2 + n )\,{x_2}^2 +
 {{( -2 + n ) }^2}\,{x_2}^4) ). \nonumber
\end{align} 
From  (\ref{22}),  (\ref{37}) and (\ref{40}), 
we get the following equation for ${x_2}$ : 
\begin{align}
&
207936\,( 1 + 2\,n )  - 102144\,( 1 + n ) \,( 5 + 2\,n ) \,{x_2}+ 
16\,( -16577 - 41122\,n +  64640\,{n^2}      \label{eqqq41}
 \\ +  &1568\,{n^3} ) \,{{x_2}^2} -
 64\,( 1 + n ) \,( -24590 + 3103\,n + 6206\,{n^2} ) \,{{x_2}^3} + \nonumber
16\,( -29251 + 29870\,n 
\\  
 -  &136972\,{n^2} + 61176\,{n^3} + 2576\,{n^4} ) \,{{x_2}^4} - 
  \nonumber 128\,( -2 + n ) \,( 1 + n ) \,( -7475 + 1264\,n 
 \\  +  &  2528\,{n^2} ) \,{{x_2}^5} + 
 4\,(-2 + n)\,(-155306 + 25437\,n - 248456\,{n^2} +133792\,{n^3} \nonumber \\
 + &  6920\,{n^4})\, {{x_2}^6}
-128\,{{( -2 + n ) }^2}\,( 1 + n ) \,( -2207 + 559\,n + 1118\,{n^2} )\,{{x_2}^7}  
  \nonumber
   \\   + &  8\,{{( -2 + n ) }^2}\,( -34571 + 3055\,n 
  -  35132\,{n^2} + 21996\,{n^3} + 
 1216\,{n^4} ) \,{{x_2}^8}  \nonumber 
 \\
- & 16\,{{( -2 + n ) }^3}\,( 1 + n ) \,( -2324 + 1159\,n 
 +   2318\,{n^2} ) \,{{x_2}^9}   \nonumber 
 \\ + & 
  2\,{{( -2 + n ) }^3}\,( -31006 + 5873\,n - 25762\,{n^2} + 17808\,{n^3} + 
944\,{n^4} ) \,{{x_2}^{10}}  \nonumber
\\ 
 -& 32\,{{( -2 + n ) }^4}\,( 1 + n ) \,( -5 + 88\,n + 176\,{n^2}) \,{{x_2}^{11}} 
    \nonumber 
+ 
{{( -2 + n ) }^4}\,( -7229 + 3419\,n \\
 -  & 6086\,{n^2} + 4328\,{n^3} + 
192\,{n^4} ) \,{{x_2}^{12}} -  \nonumber
8\,{{( -2 + n ) }^5}\,(1 + n)\,( 56 + 29\,n + 58\,{n^2} )\,{{x_2}^{13}}   \nonumber
\\
+ &
2\,{{(-2 + n)}^5}\,(-191 + 229\,n - 215\,{n^2} + 144\,{n^3} + 4\,{n^4})\,
   {{x_2}^{14}}   \notag 
- 8\,{{( -2 + n ) }^6}\,( 1 + n ) 
\\ \times &  \,( 4 + n + 2\,{n^2} ) \,{{x_2}^{15}} + {{( -2 + n )}^6}\,( -1 + 4\,n )\,( 5 - 3\,n + 2\,{n^2})\,{{x_2}^{16}} = 0.  \notag 
\end{align}
By using a similar method as for $B_n$-type, we see that for $n\ge 3$ the 
equation  (\ref{eqqq41}) has a solution 
$ x_2 = x_2^0$ between $1 <  x_2 < 5/4$.  Then we obtain a solution
 $\{ u_0, u_1, u_2, x_1, x_2, e \} = 
\{ u_0^0, u_1^0, u_2^0, 1, x_2^0, e^0 \}$ of equations (\ref{19})  
from  (\ref{37}), (\ref{38}), (\ref{39}) and (\ref{40}), and 
we also see  that $u_2^0 > 0$. 



\medskip

3) Case $G$ is of $D_n$-type. 

We consider the case of  $n \geq 6$ and $p = 3$. 
Then we have  that $d_1 = 8$, $d_2 = (n - 3)\cdot (2\,n - 7)$, 
$d_3 = 4\cdot 3\cdot(n - 3)$ and $d_4 = 3\cdot 2$. 
From (\ref{eq29}), (\ref{eq25}), (\ref{eq26}) and (\ref{eq27})  
we obtain   that
\begin{align} 
 u_2 = & (-256 +256\,(-1 + n) \,{x_2} - 32\,(-47 + 19\,n) \,{x_2}^2 \label{eq47} 
 \\  + &
384\,( -3 + n ) \,( -1 + n) \, {x_2}^3 -  12\,( -3 + n ) \,
( -81 + 37\,n ) \,{x_2}^4 \nonumber
\\ + &
 172\,{{( -3 + n ) }^2}\, ( -1 + n ) \,{x_2}^5 - {{( -3 + n ) }^2}\,
( -251 + 126\,n ) \,{x_2}^6 \nonumber
\\  + &
 24\,{{( -3 + n ) }^3}\, ( -1 + n ) \,{x_2}^7 -2\,{{( -3 + n ) }^3}\,
( -11 + 6\,n ) \,{x_2}^8)/   \nonumber
\\ &
(( -7 +  2\,n ) \,{x_2}\,( 8 + 3\,( -3 + n ) \, {x_2}^2 ) \,
 ( 8 + 9\,( -3 + n ) \, {x_2}^2 +  2\,{{( -3 + n ) }^2}\,{x_2}^4 )),   \nonumber
\\ \nonumber & \nonumber
\\
u_1 = & ({x_2}\, ( 8 + 3\,( -3 + n ) \,{x_2}^2 ) )/(8 + 
 9\,( -3 + n ) \,{x_2}^2+ 2\,{{( -3 + n ) }^2}\,{x_2}^4),   \label{eq48} 
 \\ \nonumber & \nonumber
\\
u_0 = &
 ({x_2}\,( 64 + 120\,( -3 + n ) \,{x_2}^2 + 51\,{{( -3 + n ) }^2}\, {x_2}^4 + 
 6\,{{( -3 + n ) }^3}\,{{x_2}^6}) )/  \label{eq49}
\\ &
(( 8 + 3\,( -3 + n ) \,{x_2}^2 ) \,( 8 + 9\,( -3 + n ) \,{x_2}^2 + 
 2\,{{( -3 + n ) }^2}\,{x_2}^4),    \nonumber 
  \\ \nonumber & \nonumber
\\
e = &
 (( 2 +  ( -3 + n ) \,{x_2}^2 ) \,( 64 + 120\,( -3 + n ) \,{x_2}^2 + 
  51\,{{( -3 + n ) }^2}\,{x_2}^4 +  6\,{{( -3 + n ) }^3} \label{eq50}
\\ 
\times & {x_2}^6) )/(4\,( -1 + n ) \,{x_2}\,( 8 + 3\,( -3 + n ) \,{x_2}^2 ) \,( 8 + 9\,( -3 + n ) 
\,{x_2}^2 + 2\,{{( -3 + n ) }^2}\,{x_2}^4).   \nonumber
\end{align}

From  (\ref{22}),  (\ref{eq47}) and (\ref{eq50}), 
we get the following equation for ${x_2}$ : 
\begin{align}
&32768\,( -1 + 2\,n )  - 32768\,( -1 + n ) \,( 5 + 2\,n ) \,{x_2}   {\label{eqqq51}} 
\\ 
+ &
  4096\,( 16 - 181\,n + 80\,{n^2} + 4\,{n^3} ) \,{{x_2}^2}  \notag
\\ 
- &
  4096\,( -1 + n ) \,( -345 - 31\,n +  62\,{n^2} ) \,{{x_2}^3}  \notag
\\ 
+ &
  512\,( 1809 + 5480\,n - 5747\,{n^2} + 974\,{n^3} + 96\,{n^4} ) \,{{x_2}^4}  \notag
\\
 - &
1024\,( -3 + n ) \,( -1 + n ) \, ( -1605 - 191\,n + 382\,{n^2} ) \,{{x_2}^5}  \notag
\\ 
 + &
256\,( -3 + n ) \,( 6078 + 7841\,n - 10762\,{n^2} + 1901\,{n^3} + 230\,{n^4} ) \,
{{x_2}^6}  \notag 
\\
- &
64\,{{( -3 + n ) }^2}\, ( -1 + n ) \,( -15567 - 2449\,n + 4898\,{n^2} ) \,
{{x_2}^7}   \notag
\\ 
+ &
  8\,{{( -3 + n ) }^2}\,( 115983 + 130280\,n - 194749\,{n^2} + 35538\,{n^3} + 
  4512\,{n^4} ) \,{{x_2}^8}  \notag
\\ 
- &
  24\,{{( -3 + n ) }^3}\,( -1 + n ) \, ( -14017 - 2967\,n + 5934\,{n^2} ) \,
{{x_2}^9}  \notag
\\ 
 + &
  {{( -3 + n ) }^3}\, ( 208734 + 431367\,n - 548967\,{n^2} + 102472\,{n^3} + 
 12004\,{n^4} ) \,{{x_2}^{10}}  \notag
\\ 
 - &
  12\,{{( -3 + n ) }^4}\,( -1 + n ) \,( -5155 - 1539\,n + 
3078\,{n^2} ) \,{{x_2}^{11}}  \notag
\\ 
+ &
  3\,{{( -3 + n ) }^4}\,( -3566 + 42709\,n - 39524\,{n^2} + 7468\,{n^3} + 
 688\,{n^4} ) \,{{x_2}^{12}}  \notag
\\ 
- &
  48\,{{( -3 + n ) }^5}\, ( -1 + n ) \,( -113 - 53\,n + 106\,{n^2} ) \,
   {{x_2}^{13}}   \notag
\\ 
+ &
  12\,{{( -3 + n ) }^5}\,( -968 + 1858\,n - 1205\,{n^2} + 228\,{n^3} + 12\,{n^4} ) \,
   {{x_2}^{14}} \notag
\\ 
- &
  144\,{{( -3 + n ) }^6}\,{{( -1 + n ) }^2}\,( 1 + 2\,n ) \,
 {{x_2}^{15}}  \notag
\\ 
+ &
  12\,{{( -3 + n ) }^6}\,( -11 + 6\,n ) \,( 10 - 7\,n + 2\,{n^2} )
\,{{x_2}^{16}} = 0.  \notag\end{align}
By using a similar method as for $B_n$-type, we see that for $n\ge 6$ the 
equation  (\ref{eqqq51}) has a solution 
$ x_2 = x_2^0$ between $1 <  x_2 < 5/3$.  Then we obtain   a solution $\{ u_0, u_1, u_2, x_1, x_2, e \} = 
\{ u_0^0, u_1^0, u_2^0, 1, x_2^0, e^0 \}$ of equations (\ref{19})  
from  (\ref{eq47}), (\ref{eq48}), (\ref
{eq49}) and (\ref{eq50}), and 
we also see  that $u_2^0 > 0$. 



\bigskip

4) Case $G$ is  of $E_6$-type. 

In this case  we have that $d_1 = 24$, $d_2 =  3$, 
$d_3 = 40$ and $d_4 = 10$. 
From (\ref{eq29}), (\ref{eq25}), (\ref{eq26}) and (\ref{eq27}),  
we obtain   
\begin{align} 
 u_2 = \,  &  -\left(186 {x_2}^8-480
   {x_2}^7+967
   {x_2}^6-1616
   {x_2}^5+1592
   {x_2}^4-1728
   {x_2}^3+956
   {x_2}^2  \right. \label{eq54}  \\
    & -  576
   {x_2}+144 \big)/ \left({x_2}
   \left(2
   {x_2}^2+3\right)
   \left(3
   {x_2}^2+2\right)
   \left(5
   {x_2}^2+6\right) \right),  \nonumber 
\\
u_1  \, = &  \,
\frac{{x_2} \left(5
   {x_2}^2+6\right)}{\left
   (2 {x_2}^2+3\right)
   \left(3
   {x_2}^2+2\right)}, \label{eq55} 
   \\
u_0 = & \,
\frac{{x_2} \left(30
   {x_2}^6+125
   {x_2}^4+140
   {x_2}^2+36\right)}{\left(2 {x_2}^2+3\right)
   \left(3
   {x_2}^2+2\right)
   \left(5
   {x_2}^2+6\right)}, \label{eq56} 
\\
e \, = &  \,
 \frac{\left({x_2}^2+1\right) \left(30 {x_2}^6+125
   {x_2}^4+140
   {x_2}^2+36\right)}{8 {x_2} \left(2
   {x_2}^2+3\right)
   \left(3
   {x_2}^2+2\right)
   \left(5 {x_2}^2+6\right)}.   \label{eq57} 
\end{align}
From  (\ref{22}),  (\ref{eq54}) and (\ref{eq57}), 
we get the following equation for ${x_2}$ : 
\begin{align} & 
94860 \, {x_2}^{16}- 468000 \,
  {x_2}^{15}+1562520 \,
  {x_2}^{14}-4008000 \,
  {x_2}^{13}+8070115 \,
  {x_2}^{12}  \label{eqqq58} 
   \\
  - \, & 13885480 \,
  {x_2}^{11}
  +  20117227 \,
  {x_2}^{10}-25245080 \,
  {x_2}^9+27575870 \,
  {x_2}^8-25883264 \, 
  {x_2}^7  \nonumber
   \\
  + \, & 21320504 \,
  {x_2}^6-14780736 \,
  {x_2}^5+8807200 \,
  {x_2}^4-4242816 \, 
  {x_2}^3+1608048 \, 
  {x_2}^2  \nonumber\\
  - \, & 445824 \,
  {x_2} + 59616 =0.   \nonumber 
\end{align}

By using a similar method as for $B_n$-type, we see that for $n\ge 6$ 
equation  (\ref{eqqq58}) has a solution 
$ x_2 = x_2^0$ between $1 <  x_2 < 5/3$.  Then we obtain   a solution $\{ u_0, u_1, u_2, x_1, x_2, e \} = 
\{ u_0^0, u_1^0, u_2^0, 1, x_2^0, e^0 \}$ of equations (\ref{19})  
from  (\ref{eq54}), (\ref{eq55}), (\ref{eq56}) and (\ref{eq57}),  and 
we also see  that $u_2^0 > 0$. 

 This gives the solution  
   $$\{ u_0^0, u_1^0, u_2^0, 1, x_2^0, e^0 \}  \approx \,  \, \{ 1.88908, 0.379243, 0.140912, 1.62965, 0.32505
 \}, $$
 and, similarly  we obtain three more solutions given by 
 \begin{align*} 
 \{ u_0^0, u_1^0, u_2^0, 1, x_2^0, e^0 \}  \approx  \, & \, \{ 0.393637, 0.308385, 0.103143, 0.361629, 0.425457 \}, \\
  \{ u_0^0, u_1^0, u_2^0, 1, x_2^0, e^0 \}  \approx  \, & \, \{ 0.547238, 0.370178, 1.60644,0.483835, 0.360612
 \},  \\  \{ u_0^0, u_1^0, u_2^0, 1, x_2^0, e^0 \} \approx   \, & \, \{ 1.52202, 0.418588, 1.31967, 1.27928, 0.306505
 \}.   
  \end{align*}
  
\bigskip

5) Case $G$ is of $E_7$-type. 

In this case  we have  that $d_1 = 45$, $d_2 =  3$, 
$d_3 = 64$ and $d_4 = 20$. 
From (\ref{eq29}), (\ref{eq25}), (\ref{eq26}) and (\ref{eq27}),  
we obtain   
\begin{align} 
 u_2 =  \, & - \left(1696
  {x_2}^8-4608
  {x_2}^7+10904
  {x_2}^6-19008
  {x_2}^5+22140
  {x_2}^4-25488
  {x_2}^3  \right.   \label{eq60}   
  \\
   + &  \left. 16849
  {x_2}^2-11088
  {x_2}+3620\right) / \left( 6{x_2}
   \left( {x_2}^2+1\right)
   \left(4 {x_2}^2+7\right)
   \left(8{x_2}^2+11\right) \right),  \nonumber 
\\
u_1 = \, & \frac{{x_2} \left(8
  {x_2}^2+11\right)}{2
   \left({x_2}^2+1\right)
   \left(4
  {x_2}^2+7\right)},  \label{eq61} 
\\
u_0 =  \, &
\frac
   {{x_2} \left(64
  {x_2}^6+336
  {x_2}^4+480
  {x_2}^2+181\right)}{2
   \left({x_2}^2+1\right)
   \left(4
  {x_2}^2+7\right)
   \left(8
  {x_2}^2+11\right)}, \label{eq62} 
\\
e  \, \, =  \, &
\frac{\left(4
  {x_2}^2+5\right)
   \left(64{x_2}^6+336
  {x_2}^4+480
  {x_2}^2+181\right)}{72
  {x_2}
   \left({x_2}^2+1\right)
   \left(4
  {x_2}^2+7\right)
   \left(8
  {x_2}^2+11\right)}.  \label{eq63} 
\end{align}
From  (\ref{22}),  (\ref{eq60}) and (\ref{eq63}), 
we get the following equation for ${x_2}$ : 
\begin{align} & 
24313856
  {x_2}^{16}-128581632
  {x_2}^{15}+482637824
  {x_2}^{14}-1357332480
  {x_2}^{13} \label{eqqq64}
   \\
 + & 3043447808
  {x_2}^{12}  -  5804421120
  {x_2}^{11}+9347615296
  {x_2}^{10}-13107483648
  {x_2}^9 \nonumber
 \\
+  & 15962982496
  {x_2}^8   -  16875749376
  {x_2}^7    +  15608426188
  {x_2}^6-12310144128
  {x_2}^5 \nonumber
 \\
+ & 8333330528
  {x_2}^4   -  4638529008
  {x_2}^3+2039329151
  {x_2}^2  
  - \, 672320880
  {x_2}+114663500 =0. \nonumber  
\end{align}
By using a similar method as for $E_6$-type,  we obtain four  solutions $\{ u_0^0, u_1^0, u_2^0, 1, x_2^0, e^0 \} $ of equations (\ref{19})  which are  approximately given by 
 \begin{align*} 
 \{ u_0^0, u_1^0, u_2^0, 1, x_2^0, e^0 \} \approx  \, & \, \{ 0.633451, 0.328931, 0.0705205, 0.509298, 0.409568 \}, \\
  \{ u_0^0, u_1^0, u_2^0, 1, x_2^0, e^0 \}  \approx  \, & \, \{ 0.819745, 0.377972, 1.54275, 0.649661, 0.360839 \},  \\ 
   \{ u_0^0, u_1^0, u_2^0, 1, x_2^0, e^0 \}  \approx   \, & \, \{ 1.56687, 0.432465, 1.3115, 1.25338, 0.312624 \},  \\
    \{ u_0^0, u_1^0, u_2^0, 1, x_2^0, e^0 \}  \approx  \, & \, \{ 1.8899, 0.414278, 0.0931131, 1.55163, 0.319015 \}.
  \end{align*}

Thus we have proved the following.:
\begin{propo}\label{pr1} {\em (1)} The compact  Lie groups $SO(2n+1)$ $(n\ge 5)$, $Sp(n)$ $(n\ge 3)$, and $SO(2n)$ $(n\ge 6)$ admit at least one 
left-invariant Einstein metric which is not naturally reductive.

{\em (2)}  The compact Lie groups $E_6$ and $E_7$ admit  at least four 
left-invariant Einstein metrics which are not naturally reductive.
\end{propo}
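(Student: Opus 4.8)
The proposition is obtained by collecting the five cases 1)--5) analysed above and combining them with the characterisation of naturally reductive metrics in Proposition \ref{prop4}. Recall that for each K\"ahler C-space of Type IIb we have normalised $x_1=1$ and, discarding the branch (\ref{eq28}) of $u_2$ (which yields the metrics that are naturally reductive with respect to $G\times K$), retained the branch (\ref{eq29}); on this branch the Einstein system (\ref{19}) is equivalent to the single polynomial equation in $x_2$ --- namely (\ref{eq34}) for $B_n$, (\ref{eqqq41}) for $C_n$, (\ref{eqqq51}) for $D_n$, (\ref{eqqq58}) for $E_6$, and (\ref{eqqq64}) for $E_7$ --- with $u_0,u_1,u_2,e$ recovered as explicit rational functions of $x_2$.

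First I would record the existence of a positive solution in each case. For the three infinite families one argues uniformly in $n$: denoting by $f$ the left-hand side of the relevant polynomial equation, one evaluates $f(1)$ and the value of $f$ at the right endpoint of the prescribed interval ($17/10$ for $B_n$, $n\ge5$; $5/4$ for $C_n$, $n\ge3$; $5/3$ for $D_n$, $n\ge6$), expands both as polynomials in $n$, and checks that they have opposite signs for every admissible $n$; the intermediate value theorem then produces a root $x_2^0$ strictly inside the interval, while positivity of $u_0^0$, $u_1^0$, $e^0$ is immediate from their formulas. Positivity of $u_2^0$ is the delicate point: one expands the numerator $h(x_2)$ of $u_2$ in powers of $(x_2-1)$ and bounds the negative contributions using $0\le(x_2-1)^4\le(7/10)^4$ (and the analogous bound on the respective interval for $C_n$ and $D_n$), thereby reducing the inequality $h(x_2^0)>0$ to the positivity of an explicit polynomial in $n$. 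For $E_6$ and $E_7$ one instead exhibits, with a computer algebra system, four positive roots of the degree-sixteen polynomials (\ref{eqqq58}) and (\ref{eqqq64}) lying in ranges where the associated $u_0$, $u_1$, $u_2$, $e$ are all positive.

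Next I would check that none of these Einstein metrics is naturally reductive. In every case $1<x_2^0$, so $x_1=1\ne x_2^0$ and alternative 1) of Proposition \ref{prop4} fails. Moreover the explicit formula for $u_1$ in each case has the form $u_1=x_2\,p(x_2)/q(x_2)$ with $q(x_2)-p(x_2)$ a sum of positive monomials for $x_2>0$ --- for instance $q-p=(2n-5)x_2^2\big(6+(2n-5)x_2^2\big)$ for $B_n$ and $q-p=6x_2^4+8x_2^2$ for $E_6$ --- whence $u_1^0<x_2^0$, which rules out alternatives 2) and 3). By Proposition \ref{prop4} the metric is then not naturally reductive with respect to $G\times L$ for any closed subgroup $L$, and since $G$ is simple the D'Atri--Ziller classification \cite{dazi} shows it is not naturally reductive at all. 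Under the identifications $B_n\leftrightarrow SO(2n+1)$, $C_n\leftrightarrow Sp(n)$, $D_n\leftrightarrow SO(2n)$ this gives part (1); for part (2), the four roots of (\ref{eqqq58}) (resp. (\ref{eqqq64})) are pairwise distinct, so, $x_1$ being fixed at $1$, they determine four pairwise non-homothetic left-invariant Einstein metrics on $E_6$ (resp. $E_7$), each non-naturally reductive by the preceding argument.

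The main obstacle is the uniform-in-$n$ part of the existence statement, and especially the positivity of $u_2^0$: one must control a degree-sixteen polynomial in $x_2$ whose coefficients are themselves polynomials in $n$, and it is precisely the Taylor expansion about $x_2=1$ together with the crude estimate $(x_2-1)^4\le(7/10)^4$ that converts this into a finite check on polynomials in $n$. Everything else --- the reduction of (\ref{19}) to a one-variable equation via Proposition \ref{prop5} and Lemma \ref{lemma4}, and the naturally reductive test of Proposition \ref{prop4} --- is already in place, so the remaining work is careful bookkeeping, case by case.
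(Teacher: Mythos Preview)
Your proposal follows the paper's approach closely and is essentially correct, but there is one factual slip: you assert that ``in every case $1<x_2^0$'', yet for $E_6$ and $E_7$ two of the four roots listed in the paper lie below $1$ (e.g.\ $x_2^0\approx 0.362$ and $x_2^0\approx 0.484$ for $E_6$). What is actually needed to exclude alternative~1) of Proposition~\ref{prop4} is only $x_2^0\neq 1$, and that does hold in all cases (for the classical families because $x_2^0$ lies in an open interval with left endpoint $1$, and for $E_6$, $E_7$ by inspection of the numerical roots, or by evaluating the degree-sixteen polynomial at $x_2=1$).

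Your explicit check that $u_1^0<x_2^0$ via the identity $q(x_2)-p(x_2)>0$ is a clean way to rule out alternatives~2) and~3); the paper leaves this step implicit, relying on the observation that branch~(\ref{eq28}) is exactly the locus $u_0=u_1=x_2$, so that a solution arising from branch~(\ref{eq29}) and satisfying~(\ref{except}) cannot coincide with it. Your argument makes this separation of branches transparent and is a worthwhile addition.
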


  \section{ Einstein metrics  on compact Lie groups of type I }
  
 We assume that   $\{\alpha^{}_{i_0}\}$ is  not next  to the negative of  the maximal root, and
 by removing  $\{\alpha^{}_{i_0}\}$  from the extended Dynkin diagram  the resulting diagram is connected,
which is the case of  Type Ib.  
The case of spaces of Type Ia will be examined in Section 6. 

We consider left invariant metrics 
\begin{align}\label{eeq100}
< \,\, , \,\, >  =  
u_0\cdot B|_{{\mbox{\footnotesize$ \frak h$}}_0} + 
u_1\cdot B|_{{\mbox{\footnotesize$ \frak h$}}_1} + 
 x_1\cdot B|_{{\mbox{\footnotesize$ \frak m$}}_1} + 
{x_2}\cdot B|_{{\mbox{\footnotesize$ \frak m$}}_2} 
\end{align}
 on a compact Lie group $G$  associated  to K\"ahler C-spaces of Type Ib.  Note that a metric (\ref{eeq100}) is  also ${Ad}(H)$-invariant.

Let $d_1 = \dim{\frak h}_1$, 
$d_3 = \dim{\frak m}_1$ and $d_4 = \dim{\frak m}_2$. 
By the relations 
$\left[{\frak m}_1, {\frak m}_1\right] \subset {\frak h} \oplus {\frak m}_2$, 
$\left[{\frak m}_2, {\frak m}_2\right] \subset {\frak h}$, 
$\left[{\frak m}_1, {\frak m}_2\right] \subset  {\frak m}_1$, 
we see that 
$\displaystyle{k \brack ij}$ are zero, except $\displaystyle{3 \brack 03}$, 
$\displaystyle{4 \brack 04}$,  $\displaystyle{1 \brack 11}$, $\displaystyle{3 \brack 13}$,  $\displaystyle{4 \brack 14}$,    $\displaystyle{4 \brack 33}$.
By Lemma \ref{ric1},
 we have that  
\begin{equation}\label{eq112}
\left\{\begin{array}{l}
\displaystyle{{3 \brack 03} +}  \displaystyle{{4 \brack 04} = 1,
\qquad 
{1 \brack 11} + {3 \brack 13} + {4 \brack 14} = d_1},
\\  \\
\displaystyle{2 {0 \brack 33}}  \displaystyle{
+ 2 {1 \brack 33}  +2 {4 \brack 33}
= d_3, 
\qquad
2 {0 \brack 44} + 2 {1 \brack 44}  + {3 \brack 43} = d_4.}
\end{array}
\right.
\end{equation}

and thus the components of the Ricci tensor $r$
of the metric (\ref{eeq100})
  are given by the following: 
\begin{equation}\label{eq113}
\left\{
\begin{array}{ll} 
r_0 &= 
\displaystyle{\frac{u_0}{4\,{x_1}^2} {0 \brack 33} +
\frac{u_0}{4\,{x_2}^2} {0 \brack 44}}
 \\ & \\
r_1 &= 
\displaystyle{\frac{1}{4\,d_1\,u_1} {1 \brack 11} +
\frac{u_1}{4\,d_1\,{x_1}^2} {1 \brack 33} +
\frac{u_1}{4\,d_1\,{x_2}^2} {1 \brack 44}}
\\ & \\
r_3 &=  \displaystyle{\frac{1}{2x_1} -
\frac{x_2}{2\,d_3\,{x_1}^2} {4 \brack 33}
  - \frac{1}{2\,d_3\,{x_1}^2}\biggl(\;u_0 {0 \brack 33} +
u_1 {1 \brack 33} \;\biggr) }
\\ & \\
r_4 &=  \displaystyle{\frac{1}{x_2} \biggl(\frac{1}{2} - \frac{1}{2\,d_4} {3 \brack 43}
\biggr) +
\frac{x_2}{4\,d_4\,{x_1}^2} {4 \brack 33}
  - \frac{1}{2\,d_4\,{x_2}^2}\biggl(\;u_0 {0 \brack 44} +
u_1 {1 \brack 44}
\;\biggr).}
\end{array}
\right.
\end{equation}

\medskip


By the same method as in Section 4,  
we can compute the  numbers $\displaystyle{
{k \brack i j}}$
 and we obtain:  
\begin{lmm} \label{lemma9}  
For the metric $< \ \ , \ \ >$ on $G$,  the non-zero numbers $\displaystyle{
{k \brack ij}} $ are given as follows:
$$\begin{array}{llll} 
\displaystyle{
{0 \brack 33}} = &
\displaystyle{\frac{d_3}{(d_3 + 4 d_4)}}
\quad &
\displaystyle{{0 \brack 44} = }&
\displaystyle{\frac{4 d_4}{(d_3 + 4 d_4)}}
\\ & \\
\displaystyle{
{1 \brack 11} = }
&
\displaystyle{\frac{2 d_4 (2 d_1 + 2 - d_4)}{(d_3 + 4 d_4)}}
\quad &
\displaystyle{ {1 \brack 33} = }&
\displaystyle{\frac{d_1 d_3}{(d_3 + 4 d_4)}}
\\ & \\
\displaystyle{
{ 1 \brack 44} = }& 
\displaystyle{\frac{2 d_4 (d_4 - 2)}{(d_3 + 4 d_4)}
}\quad &
\displaystyle{{ 4 \brack 33}} = &
\displaystyle{\frac{d_3 d_4}{(d_3 + 4 d_4)}}. 
\end{array}
$$
\end{lmm}

Thus we have 

\begin{prop} \label{prop5a}
The components of the Ricci tensor $r$
of the metric 
$$< \,\, , \,\, >  =  
u_0\cdot B|_{{\mbox{\footnotesize$ \frak h$}}_0} + u_1\cdot B|_{{\mbox{\footnotesize$ \frak h$}}_1} + 
x_1\cdot B|_{{\mbox{\footnotesize$ \frak m$}}_1} + {x_2}\cdot B|_{{\mbox{\footnotesize$ \frak m$}}_2}$$ 
on $G$ 
are given by 
\begin{equation}\label{eq175}
\left\{
\begin{array}{ll} 
r_0 &= 
\displaystyle{\frac{u_0}{4\,{x_1}^2}
\displaystyle{\frac{d_3}{(d_3 + 4 d_4)}} +
\frac{u_0}{{x_2}^2}} \displaystyle{\frac{d_4}{(d_3 + 4 d_4)}},
  \\ & \\
r_1 &= 
\displaystyle{\frac{1}{2\,d_1\,u_1} 
\frac{d_4 (2 d_1 + 2 - d_4)}{(d_3 + 4 d_4)} +
\frac{u_1}{4\,{x_1}^2} 
\displaystyle{\frac{d_3}{(d_3 + 4 d_4)}} +
\frac{u_1}{2\,d_1\,{x_2}^2} 
\frac{ d_4 (d_4 - 2)}{(d_3 + 4 d_4)}},
 \\ & \\
r_3 &=  \displaystyle{\frac{1}{2x_1} -
\frac{x_2}{2\,{x_1}^2} 
\frac{d_4}{(d_3 + 4 d_4)}}  
  - \displaystyle{\frac{1}{2\,{x_1}^2}}
\biggl(\;u_0 \displaystyle{\frac{1}{(d_3 + 4 d_4)}} +
u_1 \displaystyle{\frac{d_1}{(d_3 + 4 d_4)} 
\;\biggr) },
\\ & \\
r_4 &=  \displaystyle{\frac{1}{x_2} 
\frac{2 d_4}{(d_3 + 4 d_4)}}
 +
\frac{x_2}{4\,{x_1}^2}
\displaystyle{\frac{d_3}{(d_3 + 4 d_4)}}
  - \frac{1}{{x_2}^2}\biggl(\;u_0 \frac{2}{(d_3 + 4 d_4)} +
u_1 \frac{d_4 - 2}{d_3 + 4 d_4}
\;\biggr).
\end{array}
\right.
\end{equation}
\end{prop}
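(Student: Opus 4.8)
The plan is to obtain Proposition~\ref{prop5a} simply by substituting the explicit structure constants of Lemma~\ref{lemma9} into the Ricci formulas~(\ref{eq113}) and simplifying. Recall that~(\ref{eq113}) is itself the specialization of Lemma~\ref{ric1}(1) to the decomposition ${\frak g} = {\frak h}_0 \oplus {\frak h}_1 \oplus {\frak m}_1 \oplus {\frak m}_2$ attached to a Type~Ib K\"ahler C-space (where ${\frak h}_2 = \{0\}$), in which the bracket relations $[{\frak m}_1,{\frak m}_1]\subset{\frak h}\oplus{\frak m}_2$, $[{\frak m}_2,{\frak m}_2]\subset{\frak h}$ and $[{\frak m}_1,{\frak m}_2]\subset{\frak m}_1$ force all ${k \brack ij}$ to vanish except the ones listed just before~(\ref{eq112}). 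So, once Lemma~\ref{lemma9} is granted, no further structural input is needed and the computation is purely algebraic.

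First I would use the symmetry relation~(\ref{eq3}) to record ${3 \brack 43} = {4 \brack 33} = d_3 d_4/(d_3+4d_4)$, so that after substitution the bracketed factor in $r_4$ becomes $\tfrac12 - \tfrac{1}{2d_4}{3 \brack 43} = \tfrac{2d_4}{d_3+4d_4}$. Then I would insert the six values of Lemma~\ref{lemma9} into $r_0,r_1,r_3,r_4$ of~(\ref{eq113}) one at a time. The effective point is that the denominators $d_1$ appearing in the $r_1$-terms $\tfrac{1}{4d_1u_1}{1 \brack 11}$, $\tfrac{u_1}{4d_1x_1^2}{1 \brack 33}$, $\tfrac{u_1}{4d_1x_2^2}{1 \brack 44}$ and in the $r_3$-term $\tfrac{1}{2d_3x_1^2}\big(u_0\,{0 \brack 33} + u_1\,{1 \brack 33}\big)$ partially cancel against the $d_1$'s and $d_3$'s in the structure constants: for instance $\tfrac{1}{4d_1u_1}{1 \brack 11}$ collapses to $\tfrac{1}{2d_1u_1}\cdot\tfrac{d_4(2d_1+2-d_4)}{d_3+4d_4}$, $\tfrac{u_1}{4d_1x_1^2}{1 \brack 33}$ to $\tfrac{u_1}{4x_1^2}\cdot\tfrac{d_3}{d_3+4d_4}$, $\tfrac{1}{2d_3x_1^2}u_0\,{0 \brack 33}$ to $\tfrac{u_0}{2x_1^2}\cdot\tfrac{1}{d_3+4d_4}$, $\tfrac{x_2}{2d_3x_1^2}{4 \brack 33}$ to $\tfrac{x_2}{2x_1^2}\cdot\tfrac{d_4}{d_3+4d_4}$, $\tfrac{x_2}{4d_4x_1^2}{4 \brack 33}$ to $\tfrac{x_2}{4x_1^2}\cdot\tfrac{d_3}{d_3+4d_4}$, and $\tfrac{1}{2d_4x_2^2}\big(u_0\,{0 \brack 44} + u_1\,{1 \brack 44}\big)$ to $\tfrac{1}{x_2^2}\big(u_0\tfrac{2}{d_3+4d_4} + u_1\tfrac{d_4-2}{d_3+4d_4}\big)$, while $r_0$ is immediate from ${0 \brack 33}$ and ${0 \brack 44}$. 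Carrying this out term by term reproduces exactly~(\ref{eq175}), which is the assertion.

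Since Lemma~\ref{lemma9} is taken as given, the proposition itself presents no real obstacle — it is a bookkeeping simplification. The substantive step is Lemma~\ref{lemma9}, whose proof is meant to run parallel to that of Lemma~\ref{lemma4} in Section~4: one introduces the auxiliary $\mbox{Ad}$-invariant metric coming from realizing part of the decomposition via a symmetric (or normal homogeneous) fibration of $G$, equates the Ricci components of the two metrics on the locus where they coincide to isolate the individual ${k \brack ij}$, and uses Proposition~\ref{B-H} together with the Einstein property of $B|_{{\frak m}_1} + 2B|_{{\frak m}_2}$ — which gives ${4 \brack 33} = d_3 d_4/(d_3+4d_4)$ as in~(\ref{eq15}) — to close the linear system~(\ref{eq112}). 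The thing to watch there is that for Type~Ib the ideal ${\frak h}_2$ is absent, so one must identify the correct auxiliary subalgebra for the fibration; but for Proposition~\ref{prop5a} as stated, the substitution into~(\ref{eq113}) is all that is required.
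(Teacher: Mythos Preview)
Your proposal is correct and matches the paper's own argument: Proposition~\ref{prop5a} is obtained by direct substitution of the structure constants from Lemma~\ref{lemma9} into the general Ricci formulas~(\ref{eq113}), exactly as you describe. You have also correctly identified that the real content lies in Lemma~\ref{lemma9}, whose proof the paper indicates is ``by the same method as in Section~4'' (i.e., parallel to Lemma~\ref{lemma4}).
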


\medskip

Now a metric  
\begin{center}
$< \,\, , \,\, >  =  
u_0\cdot B|_{{\mbox{\footnotesize$ \frak h$}}_0} + u_1\cdot B|_{{\mbox{\footnotesize$ \frak h$}}_1} + 
x_1\cdot B|_{{\mbox{\footnotesize$ \frak m$}}_1} + {x_2}\cdot B|_{{\mbox{\footnotesize$ \frak m$}}_2}$
 \end{center}
on $G$ is  Einstein  if and only if there exists a positive solution $\{u_0, u_1,
 x_1, {x_2}, e \}$ of the system of equations  
\begin{equation} \label{176}
r_0 = e, \quad r_1 = e, \quad r_3 = e, \quad r_4 = e.
\end{equation}

\medskip 

 We normalize  the system of equations by putting $x_1 = 1$. 
From (\ref{eq175}) we have that 
\begin{align} &
 -4\,{d_4}\,{u_0} + 4\,({d_3} + 4\,{d_4})\,e\,{x_2}^2  -
{d_3}\,{u_0}\,{x_2}^2 = 0,  \label{eq177} 
\\
& 2{d_4}(2\, - \,{d_4})\,{u_1}^2 - 
 2{d_4}(2\, + 2\,{d_1} - \,{d_4})\,{x_2}^2
+  4\, {d_1}\,({d_3} + 4\,{d_4})\,e\,{u_1}\,{x_2}^2  \label{eq178}   \\ 
 - &  {d_1}\,{d_3}\,{{{u_1}}^2}\,{x_2}^2  = 0,   \nonumber
\\
& -\,{d_3} - 4\,{d_4} +  2\,({d_3} + 4\,{d_4})\,e + 
 \,{u_0} + \,{d_1}\,{u_1}    + 
 \,{d_4}\,{x_2}\ = 0,    \label{eq179} 
\\
& 8\,{u_0} - 4\,(2 -  {d_4})\,{u_1} -  8\,{d_4}\,{x_2} + 
  4\,({d_3} + 4\,{d_4})\,e\,{x_2}^2 - 
 {d_3}\,{x_2}^3 = 0.  \label{eq180} 
\end{align}

\medskip

 By solving the linear equations (\ref{eq177}), (\ref{eq179}) and 
(\ref{eq180}) with respect to $u_0, u_1$ and $e$, we have that 
\begin{align}
u_0 = \,  &
({x_2}^2\,( -8\,d_3 - 32\,d_4 + 4\,d_3\,d_4 + 16\,{{d_4}^2} + 
4 d_4 ( 2 - 
  2\,d_1\, - \,{{d_4}} )\,{x_2} - d_1\,d_3\,{x_2}^3)) /   \label{eq181}
\\  &  (8(-2 + \,{{d_4}}) d_4+ \,  
 2 (-4 - 4\,d_1 - 2\,d_3 + 2\,d_4 - 
 2\,d_1\,d_4 + \,d_3\,d_4 ) \,{x_2}^2 - d_1\,d_3\,{x_2}^4),  \nonumber
  \\ \nonumber & \nonumber
\\ u_1 =  \, &   \label{eq182}
({x_2} - 2) {x_2} ({d_3} ({d_3} + 2 {d_4} + 2) {x_2}^3 - 4 {d_3} ({ d_4} - 1) {x_2}^2 
\\ + &  4 (2 {d_4}^2 + {d_3} {
      d_4} + 8 {d_4} + 2 {d_3} ) {x_2}   - 16 {d_4}^2)
/  \nonumber 
\\  &
(2\,( 8\,( 2 - d_4) \,d_4 + 4( 2 + 2\,d_1 + \,d_3 - \,d_4  ) 
 + \, 
4\,d_1\,d_4 - 2\,d_3\,d_4) \,{x_2}^2 +  d_1\,d_3\,{x_2}^4)),   \nonumber 
 \\ \nonumber & \nonumber
\\ 
 e  \ =  \, &  (( 4\,d_4 + d_3\,{x_2}^2)\,( 4(2\,d_3 + 8\,d_4 - 
 \,d_3\,d_4 - 4\,{{d_4}^2}) +  
4( -2\,d_4 + 2\,d_1\,d_4 +  \,{{d_4}^2}) \,{x_2}  \label{eq183} 
\\    +  &  d_1\,d_3\,{x_2}^3) )/  (4\,( d_3 + 4\,d_4) \times ( 8\,( 2 - d_4) \,d_4 
\nonumber  \\  
 + &  ( 8 + 8\,d_1 + 4\,d_3 - 4\,d_4 +  4\,d_1\,d_4 - 2\,d_3\,d_4)\,{x_2}^2 +   
 d_1\,d_3\,{x_2}^4)). \nonumber
\end{align}

We substitute (\ref{eq181}), (\ref{eq182}) and (\ref{eq183}) to equation (\ref{eq178}) and obtain that 

\begin{align}\label{eq184}
(2 {d_1}+{d_3}+2 {d_4}+2) {x_2}^2-2({d_3}+4 {d_4}){x_2}+4 {d_4} = 0
\end{align} 
or 
\begin{align}
& 
{d_1}{d_3}^3
({d_3} + 2{d_4} + 2) {x_2}^8 - 2{d_1} {d_3}^3 ({d_3} + 4{d_4}){x_2}^7   \label{eq185} \\ 
   +  & 2 {d_3}^2{d_4} (4{d_1}^2 + 10{d_3}{d_1} + 10{ d_4}{d_1} + 28{d_1} + 2{d_4}^2 - 2{d_3} + {d_3}{d_4} - 2{ d_4} - 4)  {x_2}^6   \nonumber \\ 
   - &  4{d_3}^2 ({d_3} + 4{d_4}) \left({d_4}^2 + 6{d_1}{d_4} - 2{d_4} + 4{d_1}\right) {x_2}^5   \nonumber \\ 
   +  & 8{d_3} {d_4} (4{d_4}^3 + 8{d_1}{d_4}^2 + 5{d_3}{d_4}^2 + 8{d_4}^2 + 
      8{d_1}^2{d_4} + 24{d_1}{d_4} + 16{d_1}{d_3}{d_4}   \nonumber \\
    -  &10{
    d_3}{d_4} - 32{d_4} + 16{d_1}^2 + 16{d_1} )
   {x_2}^4 - 32{d_3}
   {d_4} ({d_3} + 4{d_4})
   \left({d_4}^2 + 3{d_1}{d_4} + 2{d_1} - 4\right)
   {x_2}^3  \nonumber \\
    +  &  32{d_4}
  (2{d_4}^4 + 2{
    d_1}{d_4}^3 + 7{d_3}{d_4}^3 + 10{d_4}^3 + 4{d_1}^2{d_4}^2 + 10{d_1}{d_3}{
      d_4}^2 - 10{d_3}{d_4}^2 - 12{d_4}^2   \nonumber  \\ 
    + &  16{d_1}^2{d_4} - 24{d_1}{
      d_4} - 8{d_1}{d_3}{d_4} - 12{d_3}{d_4} - 40{d_4} + 16{
      d_1}^2 + 32{d_1} + 8{d_1}{d_3} + 8{d_3} + 16 )
   {x_2}^2   \nonumber   \\ 
   -  & 64
({d_4} - 2){d_4}
({d_4} + 2) (2{d_1} + {d_4} + 2)
({d_3} + 4{d_4})
   {x_2}    \nonumber \\ 
   + & 128 ({d_4} - 2)
   {d_4}^2\left(3{d_4}^2 + 2{d_1}{d_4} + 4{d_4} - 4{d_1} - 4\right) = 0,  \nonumber
\end{align}  

provided 
\begin{align}
{d_1} {d_3}  {x_2}^4+2 (2 {d_4} {d_1}+4 {d_1}+2 {d_3}-{d_3}{d_4}-2 {d_4}+4) {x_2}^2-8 ({d_4}-2) {d_4} \neq 0. \label{eq186}
\end{align}
  
\begin{propo}\label{prop11}
If a left invariant metric $< \,\, , \,\, >$ of the form {\em (\ref{eeq100})} on $G$ for Type Ib 
is naturally reductive  with respect to $G\times L$ for some closed subgroup $L$ of $ G$, 
then one of the following holds: 

{\em 1) } $x_1 = x_2$,  \,   {\em  2)}  $u_0
= u_1 = x_2$,  \ 
{\em 3)}  $u_0 = u_1 =  x_1 = x_2$, that is  {\em (\ref{eeq100})} is a bi-invariant metric.

Conversely, 
{\em 1) }  if $x_1 = x_2$, then the metric $< \,\, , \,\, >$ is given by 
 $ 
u_0\cdot B|_{{\mbox{\footnotesize$ \frak h$}}_0}$ $ + $ $
u_1\cdot B|_{{\mbox{\footnotesize$ \frak h$}}_1} $ $  + $ $
x_1\cdot B|_{{\mbox{\footnotesize$ \frak m$}}_1 
\oplus {\mbox{\footnotesize$ \frak m$}}_2}
$ and is naturally reductive  with respect to $G\times H$, and  {\em  2)} if $u_0
= u_1 = x_2$, then the metric $< \,\, , \,\, >$ is given by 
 $ 
u_0\cdot B|_{{\mbox{\footnotesize$ \frak h$}}_0 
\oplus {\mbox{\footnotesize$ \frak h$}}_1 \oplus {\mbox{\footnotesize$ \frak m$}}_2
}$ $ +  $ $
x_1\cdot B|_{{\mbox{\footnotesize$ \frak m$}}_1 }
$ and is naturally reductive  with respect to $G\times K$, 
where the Lie algebra $ \frak k$ is given by ${\frak h}_0
\oplus {\frak h}_1 \oplus {\frak m}_2$.
 \end{propo}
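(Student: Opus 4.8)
The plan is to imitate the proof of Proposition~\ref{prop4} essentially verbatim, exploiting that for K\"ahler C-spaces of Type Ib one has ${\frak h}_2 = \{0\}$, so the irreducible decomposition of ${\frak g}$ as an $\mbox{Ad}(H)$-module is ${\frak g} = {\frak h}_0 \oplus {\frak h}_1 \oplus {\frak m}_1 \oplus {\frak m}_2$ and a metric of the form (\ref{eeq100}) is determined by the four positive numbers $u_0, u_1, x_1, x_2$. Let ${\frak l}$ be the Lie algebra of $L$; we may assume $L$ connected, since only ${\frak l}$ enters. Note that $< \,\, , \,\, >$ is $\mbox{Ad}(H)$-invariant (it has the form (\ref{eeq100})) and, being naturally reductive with respect to $G\times L$, is also $\mbox{Ad}(L)$-invariant. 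I would split the argument into the two cases ${\frak l} \not\subset {\frak h}$ and ${\frak l} \subset {\frak h}$.

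In the case ${\frak l} \not\subset {\frak h}$ I would let ${\frak k}$ be the subalgebra of ${\frak g}$ generated by ${\frak l}$ and ${\frak h}$. Since ${\frak h} \subset {\frak k}$, the subspace ${\frak k}$ is $\mbox{Ad}(H)$-invariant and strictly larger than ${\frak h}$, so by irreducibility it contains ${\frak m}_1$ or ${\frak m}_2$. From $[{\frak m}_1, {\frak m}_1] \subset {\frak h} \oplus {\frak m}_2$ with $[{\frak m}_1, {\frak m}_1] \cap {\frak m}_2 \neq \{0\}$, together with $[{\frak m}_2, {\frak m}_2] \subset {\frak h}$ and $[{\frak m}_1, {\frak m}_2] \subset {\frak m}_1$, one sees that if ${\frak k}$ contains ${\frak m}_1$ then it also contains ${\frak m}_2$ and hence ${\frak k} = {\frak g}$; otherwise ${\frak k} = {\frak h} \oplus {\frak m}_2 = {\frak h}_0 \oplus {\frak h}_1 \oplus {\frak m}_2$. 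Being $\mbox{Ad}(H)$- and $\mbox{Ad}(L)$-invariant, $< \,\, , \,\, >$ is $\mbox{Ad}(K)$-invariant, where $K$ is the connected subgroup with Lie algebra ${\frak k}$. If ${\frak k} = {\frak g}$ then, since $G$ is simple, $< \,\, , \,\, >$ is a multiple of $B$, i.e.\ a bi-invariant metric, which is case 3. If ${\frak k} = {\frak h} \oplus {\frak m}_2$, then ${\frak k}$ is the isotropy algebra of the irreducible symmetric space $G/K$ obtained from $G/H$ by the twistor fibration (\cite{buraw}), so ${\frak k} = {\frak h}_0 \oplus {\frak h}_1 \oplus {\frak m}_2$ is simple; hence $< \,\, , \,\, >$ restricted to ${\frak k}$ is a multiple of $B|_{{\frak k}}$, which forces $u_0 = u_1 = x_2$, giving case 2.

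In the case ${\frak l} \subset {\frak h}$ the $B$-orthogonal complement ${\frak l}^{\bot}$ contains ${\frak h}^{\bot} = {\frak m}_1 \oplus {\frak m}_2$, so natural reductivity of $< \,\, , \,\, >$ with respect to $G\times L$ together with Theorem~1 forces $< \,\, , \,\, >$ to be a fixed multiple of $B$ on ${\frak m}_1 \oplus {\frak m}_2$, i.e.\ $x_1 = x_2$, giving case 1. For the converse I would read everything off Theorem~1: when $x_1 = x_2$ the metric equals $u_0 \cdot B|_{{\frak h}_0} + u_1 \cdot B|_{{\frak h}_1} + x_1 \cdot B|_{{\frak m}_1 \oplus {\frak m}_2}$, which is of the form (\ref{eq1}) with respect to $G\times H$; when $u_0 = u_1 = x_2$ it equals $u_0 \cdot B|_{{\frak h}_0 \oplus {\frak h}_1 \oplus {\frak m}_2} + x_1 \cdot B|_{{\frak m}_1}$, which is of the form (\ref{eq1}) with respect to $G\times K$, since ${\frak k} = {\frak h}_0 \oplus {\frak h}_1 \oplus {\frak m}_2$ is a (simple) subalgebra whose $B$-orthogonal complement is ${\frak m}_1$; case 3 is bi-invariant and needs no separate argument.

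The only step that is not pure bookkeeping is identifying the intermediate subalgebra ${\frak k} = {\frak h} \oplus {\frak m}_2$ and recording that it is simple. This is not made explicit in Section~5, but it is the Type Ib analogue of the set-up at the beginning of Section~4 for Type IIb, where $({\frak g}, {\frak k})$ is noted to be an irreducible symmetric pair, and it follows from the twistor-fibration structure recalled in the introduction. Once that is in place, the remaining steps are identical to those in the proof of Proposition~\ref{prop4}.
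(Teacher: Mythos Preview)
Your proposal is correct and follows precisely the approach the paper intends: the paper's own proof of Proposition~\ref{prop11} consists of the single sentence ``The proof is similar to Proposition~\ref{prop4},'' and you have faithfully carried out that adaptation, dropping ${\frak h}_2$ and using that ${\frak k} = {\frak h}_0 \oplus {\frak h}_1 \oplus {\frak m}_2$ is simple (the Type Ib analogue of the simple ideal ${\frak k}_1$ in Section~4). Your explicit justification that $<\,,\,>$ is $\mbox{Ad}(K)$-invariant is a welcome clarification of a step the paper leaves implicit in the proof of Proposition~\ref{prop4}.
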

 
The proof is  similar to Proposition  \ref{prop4}. 
 
 \medskip
 
1) Case $G$ is of $E_7$-type. 

Then  we have  that $d_1 = 48$, $d_3 = 70$, $d_4 =14$.  Equation  (\ref{eq184}) becomes 
$28 ( x_2-1) ( 7 x_2-2) =0$. For $ x_2=1$, equations  (\ref{eq181}),   (\ref{eq182}) and  (\ref{eq183}) give $u_0 = 1$, $u_1 = 1$ and $\displaystyle e = \frac{1}{4} $, which is a biinvariant metric.  For $ \displaystyle x_2= \frac{2}{7}$, equations  (\ref{eq181}),   (\ref{eq182}) and  (\ref{eq183}) give that $\displaystyle u_0= \frac{2}{7}$, $\displaystyle u_1= \frac{2}{7}$ and $\displaystyle e = \frac{3}{7} $.  By Proposition \ref{prop11}  this is  a naturally reductive   Einstein  metric on $G$. 

Equation  (\ref{eq185}) reduces to 
\begin{align}
263424 (6250 {x_2}^8-15750
   {x_2}^7  & +27125 {x_2}^6-41175
   {x_2}^5+36030 {x_2}^4   \nonumber 
   \\  & -34560
   {x_2}^3+17248 {x_2}^2-9216
   {x_2}+2048)  = 0.    \nonumber 
   \end{align}
This equation has two positive solutions $x_2 \approx 0.319422$ and $x_2 \approx 1.62088$. 
Note that these solutions satisfy (\ref{eq186}). 
For $x_2 \approx 0.319422$, equations  (\ref{eq181}),   (\ref{eq182}) and  (\ref{eq183}) give $ u_0 \approx 0.348835$, $ u_1 \approx 0.275827$ and $ e \approx 0.428332$. 
For $x_2 \approx 1.62088$ , equations  (\ref{eq181}),   (\ref{eq182}) and  (\ref{eq183}) give $ u_0 \approx 1.86993$, $ u_1 \approx 0.334612$ and $ e \approx 0.338795$. By Proposition  \ref{prop11}  these are two non-naturally reductive Einstein  metrics on $G$. 

\bigskip

2) Case $G$ is of $E_8$-type. 

Then  we have  that $d_1 = 91$, $d_3 = 128$, $d_4 =28$.  Analogously,  equations  (\ref{eq184}) and (\ref{eq185}) become $16 ( x_2-1) ( 23 x_2- 7) =0$ and 
\begin{align}
11904 {x_2}^8-30720 {x_2}^7+ & 56144
   {x_2}^6-86400 {x_2}^5+80752
   {x_2}^4  \nonumber 
   \\  &-79440 {x_2}^3+42853
   {x_2}^2-23850 {x_2}+6293
  = 0.     \nonumber 
   \end{align}
From  these  
 we obtain two naturally reductive Einstein  metrics 
   $$\{ u_0, u_1, x_2 , e \} = \{ 1, \ 1, \  1, \ 1/4 \}, \ \  
   \{ u_0, u_1, x_2 , e \} = \{ 7/23, \  7/23, \  7/23,  \ 39/92 \}, $$ 
and  two non-naturally reductive Einstein metrics given by 
 $$\{ u_0, u_1, x_2 , e \} \approx \{ 0.475824, 0.282007,  0.39314,  0.422612 \}$$ 
    $$\{ u_0, u_1, x_2 , e \} \approx \{ 1.88246,  0.345485,  1.59071,  0.337789 \}. $$

\medskip

3) Case $G$ is of $F_4$-type. 

Then  we have   that $d_1 = 21$, $d_3 = 16$, $d_4 =14$. Equations  (\ref{eq184}) and (\ref{eq185}) become $8 ( x_2-1) ( 11 x_2- 7) =0$ and 
\begin{align}
86016 (46 {x_2}^8-144
   {x_2}^7+ & 767 {x_2}^6-1728
   {x_2}^5+4116 {x_2}^4  \nonumber 
   \\  &-6696
   {x_2}^3+8119 {x_2}^2-8352
   {x_2} + 4004 )
  = 0.     \nonumber 
   \end{align}
From  the first equation   
 we obtain two naturally reductive Einstein  metrics 
   $$\{ u_0, u_1, x_2 , e \} = \{ 1, \ 1, \  1, \ 1/4 \}, \ \ 
   \{ u_0, u_1, x_2 , e \} = \{ 7/23, \  7/23, \  7/23,  \ 39/92 \}, $$ 
but the second equation has no real solutions.

Thus we have proved the following.:
\begin{propo}\label{pr2} 
The  compact Lie groups $E_7$ and $E_8$ admit  at least two 
left-invariant Einstein metrics which are not naturally reductive.
\end{propo}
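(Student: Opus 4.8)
The plan is to specialize the Type Ib analysis of this section to $\frak g = \frak e_7$ and $\frak g = \frak e_8$, each of which appears in Table Ib with the data $(d_1, d_3, d_4) = (48, 70, 14)$ and $(91, 128, 28)$ respectively. First I would substitute these dimensions into the Ricci components of Proposition \ref{prop5a}, write down the Einstein system (\ref{176}) with the normalization $x_1 = 1$, solve the linear equations (\ref{eq177}), (\ref{eq179}), (\ref{eq180}) for $u_0, u_1, e$ via (\ref{eq181})--(\ref{eq183}), and substitute into (\ref{eq178}). As in the general discussion this produces the factorization into the naturally reductive branch (\ref{eq184}) and the degree-eight polynomial equation (\ref{eq185}) in $x_2$; the genuinely new Einstein metrics must come from roots of the latter, subject to the non-degeneracy condition (\ref{eq186}).

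The core step is then to analyze (\ref{eq185}) in each case. For $\frak e_7$ it reduces, up to a positive constant, to $6250\,x_2^8 - 15750\,x_2^7 + 27125\,x_2^6 - 41175\,x_2^5 + 36030\,x_2^4 - 34560\,x_2^3 + 17248\,x_2^2 - 9216\,x_2 + 2048 = 0$, and for $\frak e_8$ to $11904\,x_2^8 - 30720\,x_2^7 + 56144\,x_2^6 - 86400\,x_2^5 + 80752\,x_2^4 - 79440\,x_2^3 + 42853\,x_2^2 - 23850\,x_2 + 6293 = 0$. I would exhibit two positive real roots of each polynomial; although the roots themselves are only approximate, the existence of a root in a prescribed interval can be certified rigorously by evaluating the polynomial at two rational endpoints of opposite sign, exactly the intermediate value argument used in the treatment of the $B_n$ case. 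Having fixed a root $x_2^0$ one checks that (\ref{eq186}) holds there, recovers $u_0^0, u_1^0, e^0$ from (\ref{eq181})--(\ref{eq183}), and verifies that these are positive, so that a left-invariant Einstein metric of the form (\ref{eeq100}) actually exists.

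Finally, to see that these metrics are not naturally reductive I would appeal to Proposition \ref{prop11}: any metric of the form (\ref{eeq100}) that is naturally reductive with respect to $G \times L$ for a closed subgroup $L$ must satisfy $x_1 = x_2$, or $u_0 = u_1 = x_2$, or be bi-invariant. Since $x_2^0 \neq 1 = x_1$, the first (and hence the third) possibility is excluded, and since the root $x_2^0$ does not lie on the branch (\ref{eq184}), one reads off from (\ref{eq181}) and (\ref{eq182}) that $u_0^0 \neq x_2^0$, excluding the second. Hence each root of (\ref{eq185}) yields a non-naturally reductive Einstein metric, and collecting the two roots in each case gives the asserted pair on $E_7$ and on $E_8$. (For $F_4$ the corresponding polynomial has no real root, so that case is omitted from the statement.)

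The main obstacle is that the polynomials (\ref{eq185}) are explicit but bulky, so the reduction is best carried out with a computer algebra system, and the genuinely delicate point is to certify \emph{rigorously} that each has two positive real roots and that each such root yields all the remaining parameters $u_0^0, u_1^0 > 0$ — i.e.\ an honest metric rather than a merely formal solution of the algebraic system. The sign-change and interval-bracketing technique from the $B_n$ argument is the tool I would use to make this step airtight.
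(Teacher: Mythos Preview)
Your proposal is correct and follows essentially the same approach as the paper: specialize the Type Ib data $(d_1,d_3,d_4)$ for $E_7$ and $E_8$, reduce via (\ref{eq181})--(\ref{eq183}) to the factorization (\ref{eq184})/(\ref{eq185}), identify the same degree-eight polynomials, locate two positive roots in each case, and invoke Proposition \ref{prop11} to rule out natural reductivity. If anything, you are slightly more explicit than the paper about certifying the roots by sign changes and about checking each clause of Proposition \ref{prop11} separately; the paper simply records the approximate roots and the resulting parameter values and appeals to Proposition \ref{prop11} without further comment.
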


Theorem \ref{main} now follows from Propositions \ref{pr1} and   \ref{pr2}. 

%
%
%
 \section{ Einstein metrics  on compact Lie groups  which are naturally reductive }
Now we consider compact Lie groups associated to K\"ahler C-spaces of Types  Ia and  IIa.  Note that $d_4= 2$ in these cases. 

In case of Type IIa  we set  ${\frak k}= {\frak h} \oplus {\frak m}_2$ and 
 ${\frak k}_1= {\frak h}_0   \oplus  {\frak m}_2$.  
Then  ${\frak k}, {\frak k}_1$ are subalgebras of ${\frak g}$,  
 ${\frak k} = {\frak k}_1 \oplus {\frak h}_1  \oplus {\frak h}_2 $ and 
 $( {\frak g}, {\frak k} )$ is an irreducible symmetric pair. 
We also have an irreducible decomposition ${\frak g} =  {\frak k}_1 \oplus  {\frak h}_1 \oplus {\frak h}_2 \oplus {\frak m}_1$ as
$\mbox{Ad}(K)$-modules, which are mutually non-equivalent. 

\begin{propo}\label{prop12}
If a left invariant metric $< \,\, , \,\, >$ of the form {\em (\ref{eeq10})} on $G$ for Type IIa
is naturally reductive  with respect to $G\times L$ for some closed subgroup $L$ of $ G$, 
then one of the following holds: 

{\em 1) } $x_1 = x_2$,  \,   {\em  2)}  $u_0
= x_2$,  \ 
{\em 3)}  $u_0 = u_1 = u_2 = x_1 = x_2$, that is  {\em (\ref{eeq10})} is a bi-invariant metric.

Conversely, 
{\em 1) }  if $x_1 = x_2$, then the metric $< \,\, , \,\, >$ is given by 
 $ 
u_0\cdot B|_{{\mbox{\footnotesize$ \frak h$}}_0}$ $ + $ $
u_1\cdot B|_{{\mbox{\footnotesize$ \frak h$}}_1} $ $ + $ $ 
 u_2\cdot B|_{{\mbox{\footnotesize$ \frak h$}}_2} $ $ + $ $
x_1\cdot B|_{{\mbox{\footnotesize$ \frak m$}}_1 
\oplus {\mbox{\footnotesize$ \frak m$}}_2}
$ and is naturally reductive  with respect to $G\times H$, and  {\em  2)} if $u_0
 = x_2$, then the metric $< \,\, , \,\, >$ is given by 
 $ 
u_0\cdot B|_{{\mbox{\footnotesize$ \frak h$}}_0 
\oplus {\mbox{\footnotesize$ \frak m$}}_2
}$ $ + $  $ 
 u_1\cdot B|_{{\mbox{\footnotesize$ \frak h$}}_1} $
 $ + $  $ 
 u_2\cdot B|_{{\mbox{\footnotesize$ \frak h$}}_2} $
  $ + $ $
x_1\cdot B|_{{\mbox{\footnotesize$ \frak m$}}_1 }
$ and is naturally reductive  with respect to $G\times K$, 
where the Lie algebra $ \frak k$ is given by $({\frak h}_0
\oplus {\frak m}_2)\oplus {\frak h}_1 \oplus {\frak h}_2$.
 \end{propo}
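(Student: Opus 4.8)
The plan is to transcribe the proof of Proposition \ref{prop4}, keeping careful track of the single place where the ideal structure of the symmetric pair $({\frak g},{\frak k})$ differs between Types IIb and IIa. I would write ${\frak l}$ for the Lie algebra of $L$ and split into the cases ${\frak l}\subset{\frak h}$ and ${\frak l}\not\subset{\frak h}$.

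In the case ${\frak l}\not\subset{\frak h}$, let ${\frak k}$ be the subalgebra of ${\frak g}$ generated by ${\frak l}$ and ${\frak h}$. Since ${\frak g}={\frak h}_0\oplus{\frak h}_1\oplus{\frak h}_2\oplus{\frak m}_1\oplus{\frak m}_2$ is an irreducible decomposition as $\mbox{Ad}(H)$-modules, ${\frak k}$ must contain ${\frak m}_1$ or ${\frak m}_2$, and the relations $[{\frak m}_1,{\frak m}_1]\subset{\frak h}\oplus{\frak m}_2$ with $[{\frak m}_1,{\frak m}_1]\cap{\frak m}_2\neq\{0\}$, $[{\frak m}_2,{\frak m}_2]\subset{\frak h}$, $[{\frak m}_1,{\frak m}_2]\subset{\frak m}_1$, together with Proposition \ref{newprop1}, give exactly the same dichotomy as in Proposition \ref{prop4}: either ${\frak k}\supset{\frak m}_1$, whence ${\frak k}\supset{\frak m}_2$ as well, ${\frak k}={\frak g}$, and the metric is bi-invariant (case 3); or ${\frak k}\supset{\frak m}_2$ but ${\frak k}\not\supset{\frak m}_1$, whence ${\frak k}={\frak h}_0\oplus{\frak h}_1\oplus{\frak h}_2\oplus{\frak m}_2={\frak h}\oplus{\frak m}_2$. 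This last case is the only one behaving differently from Type IIb: by the remark preceding the statement, for Type IIa the decomposition of ${\frak k}$ into simple ideals is ${\frak k}=({\frak h}_0\oplus{\frak m}_2)\oplus{\frak h}_1\oplus{\frak h}_2$, i.e.\ the fibre direction ${\frak m}_2$ pairs with the centre ${\frak h}_0$ rather than with ${\frak h}_1$. By Theorem 1, a metric of the form (\ref{eeq10}) that is naturally reductive with respect to $G\times K$ must be a constant multiple of $B$ on each of these simple ideals, and on ${\frak h}_0\oplus{\frak m}_2$ this forces $u_0=x_2$, which is case 2.

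In the case ${\frak l}\subset{\frak h}$, the $B$-orthogonal complement ${\frak l}^{\bot}$ contains ${\frak h}^{\bot}={\frak m}_1\oplus{\frak m}_2$; since $<\,\,,\,\,>$ is assumed naturally reductive with respect to $G\times L$, the structure part of Theorem 1 forces it to be a constant multiple of $B$ on ${\frak l}^{\bot}$, hence on ${\frak m}_1\oplus{\frak m}_2$, so $x_1=x_2$ (case 1). The converse assertions then follow directly from Theorem 1 applied to the appropriate subgroup in each situation: $G\times H$ when $x_1=x_2$ (the metric being $u_0\,B|_{{\frak h}_0}+u_1\,B|_{{\frak h}_1}+u_2\,B|_{{\frak h}_2}+x_1\,B|_{{\frak m}_1\oplus{\frak m}_2}$), and $G\times K$ with ${\frak k}=({\frak h}_0\oplus{\frak m}_2)\oplus{\frak h}_1\oplus{\frak h}_2$ when $u_0=x_2$ (the metric being $u_0\,B|_{{\frak h}_0\oplus{\frak m}_2}+u_1\,B|_{{\frak h}_1}+u_2\,B|_{{\frak h}_2}+x_1\,B|_{{\frak m}_1}$), together with the trivial bi-invariant case.

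The hard part --- indeed the only substantive point --- is to identify the correct simple-ideal decomposition of ${\frak k}={\frak h}\oplus{\frak m}_2$ in the Type IIa setting, namely that ${\frak h}_0\oplus{\frak m}_2$ (a copy of ${\frak su}(2)$) is the relevant simple ideal rather than ${\frak h}_1\oplus{\frak m}_2$; this is precisely what weakens the conclusion from $u_0=u_1=x_2$ in Type IIb to $u_0=x_2$ in Type IIa. Everything else is a verbatim copy of the argument of Proposition \ref{prop4}.
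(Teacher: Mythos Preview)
Your proposal is correct and matches the paper's approach exactly: the paper itself simply writes ``The proof is similar to Proposition~\ref{prop4}'', and you have correctly spelled out the one substantive modification, namely that for Type~IIa the ideal decomposition of ${\frak k}={\frak h}\oplus{\frak m}_2$ is $({\frak h}_0\oplus{\frak m}_2)\oplus{\frak h}_1\oplus{\frak h}_2$ (with ${\frak h}_0\oplus{\frak m}_2\cong{\frak su}(2)$ since $d_4=2$), which yields $u_0=x_2$ in place of $u_0=u_1=x_2$.
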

 
The proof is similar to Proposition \ref{prop4}.

\bigskip

Note that  the number $\displaystyle 
{ 1 \brack 44}$ in Lemma \ref{lemma4} is zero, so the
 first  and the fifth equation of the  system (\ref{19}) simplify and give rise to the relation  $u_0= x_2$.  Hence, by Proposition \ref{prop12} we  only obtain  Einstein metrics which are naturally reductive. 

\medskip

In case of Type Ia   we consider the metric $< \,\, , \,\, > $ on $G$ given by 
\begin{equation}\label{eq200}
u_0\cdot B|_{{\mbox{\footnotesize$ \frak h$}}_0} + u_2\cdot B|_{{\mbox{\footnotesize$ \frak h$}}_2} + 
x_1\cdot B|_{{\mbox{\footnotesize$ \frak m$}}_1} + {x_2}\cdot B|_{{\mbox{\footnotesize$ \frak m$}}_2},
\end{equation}  
and  set  ${\frak k}= {\frak h} \oplus {\frak m}_2$ and 
 ${\frak k}_1= {\frak h}_0   \oplus {\frak m}_2$.  
Then  ${\frak k}, {\frak k}_1$ are subalgebras of ${\frak g}$,  
 ${\frak k} = {\frak k}_1  \oplus {\frak h}_2 $ and 
 $( {\frak g}, {\frak k} )$ is an irreducible symmetric pair.  
We also have an irreducible decomposition ${\frak g} =  {\frak k}_1 \oplus {\frak h}_2 \oplus {\frak m}_1$ as
$\mbox{Ad}(K)$-modules, which are mutually non-equivalent. 

\begin{propo}\label{prop13}
If a left invariant metric $< \,\, , \,\, >$ of the form {\em (\ref{eq200})} on $G$ for Type Ia
is naturally reductive  with respect to $G\times L$ for some closed subgroup $L$ of $ G$, 
then one of the following holds: 

{\em 1) } $x_1 = x_2$,  \,   {\em  2)}  $u_0
= x_2$,  \ 
{\em 3)}  $u_0 = u_2 =  x_1 = x_2$, that is  {\em (\ref{eq200})} is a bi-invariant metric.

Conversely, 
{\em 1) }  if $x_1 = x_2$, then the metric $< \,\, , \,\, >$ is given by 
 $ 
u_0\cdot B|_{{\mbox{\footnotesize$ \frak h$}}_0}$ $ + $ $
u_2\cdot B|_{{\mbox{\footnotesize$ \frak h$}}_2} $ $ + $ $
x_1\cdot B|_{{\mbox{\footnotesize$ \frak m$}}_1 
\oplus  {\mbox{\footnotesize$ \frak m$}}_2}
$ and is naturally reductive  with respect to $G\times H$, and  {\em  2)} if $u_0
 = x_2$, then the metric $< \,\, , \,\, >$ is given by 
 $ 
u_0\cdot B|_{{\mbox{\footnotesize$ \frak h$}}_0 
\oplus {\mbox{\footnotesize$ \frak m$}}_2
}$ $ + $  $ 
 u_2\cdot B|_{{\mbox{\footnotesize$ \frak h$}}_2} $
  $ + $ $
x_1\cdot B|_{{\mbox{\footnotesize$ \frak m$}}_1 }
$ and is naturally reductive  with respect to $G\times K$, 
where the Lie algebra $ \frak k$ is given by $({\frak h}_0
\oplus {\frak m}_2) \oplus {\frak h}_2$.
 \end{propo}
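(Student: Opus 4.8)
The plan is to transcribe the proof of Proposition \ref{prop4} to the Type Ia situation. Here $\frak h_1 = \{0\}$, so the irreducible decomposition of $\frak g$ as $\mbox{Ad}(H)$-modules now reads $\frak g = \frak h_0 \oplus \frak h_2 \oplus \frak m_1 \oplus \frak m_2$ with $\frak h_0$ one-dimensional, and, as recorded just before the statement, $\frak h \oplus \frak m_2$ is a subalgebra that splits $B$-orthogonally as $(\frak h_0 \oplus \frak m_2) \oplus \frak h_2$, with $(\frak g, \frak h \oplus \frak m_2)$ an irreducible symmetric pair. First I would let $\frak l$ be the Lie algebra of $L$ and distinguish the cases $\frak l \not\subset \frak h$ and $\frak l \subset \frak h$.

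In the case $\frak l \not\subset \frak h$, I would let $\frak k$ be the subalgebra of $\frak g$ generated by $\frak l$ and $\frak h$. Since the $\mbox{Ad}(H)$-decomposition above is irreducible and $\frak k$ strictly contains $\frak h$, the algebra $\frak k$ must contain $\frak m_1$ or $\frak m_2$. Using the bracket relations $\left[\frak m_1, \frak m_1\right] \subset \frak h \oplus \frak m_2$, $\left[\frak m_1, \frak m_1\right] \cap \frak m_2 \neq \{0\}$, $\left[\frak m_2, \frak m_2\right] \subset \frak h$ and $\left[\frak m_1, \frak m_2\right] \subset \frak m_1$, I would argue that if $\frak m_1 \subset \frak k$ then also $\frak m_2 \subset \frak k$, so $\frak k = \frak g$ and the metric is bi-invariant, which is case 3); whereas if $\frak m_2 \subset \frak k$ but $\frak m_1 \not\subset \frak k$, then $\frak k = \frak h_0 \oplus \frak h_2 \oplus \frak m_2 = (\frak h_0 \oplus \frak m_2) \oplus \frak h_2$, the decomposition of $\frak k$ into the simple ideals $\frak h_0 \oplus \frak m_2$ and $\frak h_2$. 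Since $< \,\, , \,\, >$ is naturally reductive with respect to $G\times L$, an application of Theorem 1 then forces its restriction to $\frak k$ to be a constant multiple of $B$ on each simple ideal, hence $u_0 = x_2$, which is case 2).

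In the case $\frak l \subset \frak h$, I would note that the $B$-orthogonal complement $\frak l^{\bot}$ contains $\frak h^{\bot} = \frak m_1 \oplus \frak m_2$, so by Theorem 1 the naturally reductive metric $< \,\, , \,\, >$ is a constant multiple of $B$ on $\frak l^{\bot}$, whence $x_1 = x_2$, which is case 1). For the converse I would apply the first assertion of Theorem 1 directly: when $x_1 = x_2$ the metric is $u_0 \cdot B|_{\frak h_0} + u_2 \cdot B|_{\frak h_2} + x_1 \cdot B|_{\frak m_1 \oplus \frak m_2}$ with $\frak m = \frak m_1 \oplus \frak m_2$ the $B$-complement of $\frak h$, so it is naturally reductive with respect to $G\times H$; when $u_0 = x_2$ the metric is $u_0 \cdot B|_{\frak h_0 \oplus \frak m_2} + u_2 \cdot B|_{\frak h_2} + x_1 \cdot B|_{\frak m_1}$ with $\frak m_1$ the $B$-complement of the subalgebra $\frak k = (\frak h_0 \oplus \frak m_2) \oplus \frak h_2$ and $\frak h_0 \oplus \frak m_2$, $\frak h_2$ its simple ideals, so it is naturally reductive with respect to $G\times K$.

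The step I expect to require the most care, just as in the proof of Proposition \ref{prop4}, is the subcase $\frak m_2 \subset \frak k$, $\frak m_1 \not\subset \frak k$: one must verify that the subalgebra generated by $\frak l$ and $\frak h$ is exactly $\frak h_0 \oplus \frak h_2 \oplus \frak m_2$ and that this is a $B$-orthogonal direct sum of the simple ideals $\frak h_0 \oplus \frak m_2$ and $\frak h_2$, and then that this ideal structure indeed pins the D'Atri--Ziller subgroup down to $K$. This rests on $\left[\frak h_2, \frak m_2\right] = \{0\}$ from Proposition \ref{newprop1} (so that $\frak h_2$ is an ideal of $\frak k$), together with the facts recorded before the statement that $\frak h_0 \oplus \frak m_2$ is a (necessarily simple, three-dimensional) subalgebra and that $(\frak g, \frak h \oplus \frak m_2)$ is an irreducible symmetric pair. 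Everything else is a routine transcription of the Type IIb argument, with the summand $\frak h_1$ and the parameter $u_1$ suppressed.
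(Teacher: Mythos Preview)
Your proposal is correct and is precisely the paper's approach: the paper's proof of Proposition~\ref{prop13} consists of the single sentence ``The proof is similar to Proposition~\ref{prop4},'' and what you have written is exactly that transcription, with $\frak h_1$ and $u_1$ suppressed and $\frak k_1 = \frak h_0 \oplus \frak m_2$ now the three-dimensional simple ideal. Your identification of the delicate subcase ($\frak m_2 \subset \frak k$, $\frak m_1 \not\subset \frak k$) and the ingredients needed there matches what the paper leaves implicit in the proof of Proposition~\ref{prop4}.
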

The proof is similar to Proposition \ref{prop4}.

\bigskip

By the same method as in Section 4,  we have 

\begin{prop} \label{prop17}
The components of the Ricci tensor $r$
of the metric  
$< \,\, , \,\, > $ on $G$ 
are given by 
\begin{equation}\label{eq275}
\left\{
\begin{array}{ll} 
r_0 &= 
\displaystyle{\frac{u_0}{4\,{x_1}^2}
\displaystyle{\frac{d_3}{(d_3 + 8)}} +
\frac{u_0}{{x_2}^2}} \displaystyle{\frac{2}{(d_3 + 8)}},
  \\ & \\
r_2 &= 
\displaystyle{\frac{1}{4\,d_2\,u_2}
\displaystyle{\left(d_2 - \frac{ d_3 (d_3 + 2 )}
{2 (d_3 + 8)}\right)} +
\frac{u_2}{4\,d_2\,{x_1}^2}
\frac{ d_3 (d_3 + 2 )}{2 (d_3 + 8)} },
 \\ & \\
r_3 &=  \displaystyle{\frac{1}{2x_1} -
\frac{x_2}{2\,{x_1}^2} \frac{2}{(d_3 + 8)}}  
  - \displaystyle{ \frac{1}{2\,{x_1}^2}}
\biggl(\;u_0 \frac{1}{(d_3 + 8)} +
 u_2 \frac{ (d_3 + 2 )}{2 (d_3 + 8)}
  \biggr),  
\\ & \\
r_4 &=  \displaystyle{\frac{1}{x_2} 
\frac{4}{(d_3 + 8)}}
 +
\frac{x_2}{4\,{x_1}^2}
\displaystyle{\frac{d_3}{(d_3 + 8)}}
  - \frac{u_0}{{x_2}^2} \frac{2}{(d_3 + 8)}.
\end{array}
\right.
\end{equation}
\end{prop}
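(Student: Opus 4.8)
The plan is to reproduce, in the special situation of Type Ia (so ${\frak h}_1 = \{0\}$ and $d_4 = 2$), the computation carried out for Type IIb in Section 4. Label the summands of the decomposition ${\frak g} = {\frak h}_0 \oplus {\frak h}_2 \oplus {\frak m}_1 \oplus {\frak m}_2$ as ${\frak w}_0 = {\frak h}_0$, ${\frak w}_2 = {\frak h}_2$, ${\frak w}_3 = {\frak m}_1$, ${\frak w}_4 = {\frak m}_2$, so that in the notation of (\ref{eq4}) one has $y_0 = u_0$, $y_2 = u_2$, $y_3 = x_1$, $y_4 = x_2$. First I would use the bracket relations $\left[{\frak m}_1, {\frak m}_1\right] \subset {\frak h} \oplus {\frak m}_2$, $\left[{\frak m}_2, {\frak m}_2\right] \subset {\frak h}$, $\left[{\frak m}_1, {\frak m}_2\right] \subset {\frak m}_1$ together with Proposition \ref{newprop1} (which gives $\left[{\frak h}_2, {\frak m}_2\right] = \{0\}$, and hence also $\left[{\frak m}_2,{\frak m}_2\right] \subset {\frak h}_0$ by $\mbox{Ad}$-invariance of $B$) to see that the only possibly nonzero numbers $\displaystyle{k \brack ij}$ are $\displaystyle{3 \brack 03}$, $\displaystyle{4 \brack 04}$, $\displaystyle{2 \brack 22}$, $\displaystyle{3 \brack 23}$, $\displaystyle{4 \brack 33}$. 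Substituting into Lemma \ref{ric1}(1) then expresses the four components $r_0, r_2, r_3, r_4$ of the Ricci tensor of $< \,\, , \,\, >$ in terms of these five numbers (there is no $r_1$, since ${\frak h}_1 = \{0\}$), while the identity $\sum_{i,j}{j \brack ki} = d_k$ yields the relations $\displaystyle{3 \brack 03} + \displaystyle{4 \brack 04} = 1$, $\displaystyle{2 \brack 22} + \displaystyle{3 \brack 23} = d_2$, $2\displaystyle{0 \brack 33} + 2\displaystyle{2 \brack 33} + 2\displaystyle{4 \brack 33} = d_3$ and $2\displaystyle{0 \brack 44} + \displaystyle{3 \brack 43} = d_4$.

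Next I would pin down the individual structure constants by the same two inputs used in Section 4. Proposition \ref{B-H} tells us that $B|_{{\frak m}_1} + 2B|_{{\frak m}_2}$ is K\"ahler-Einstein on $G/H$, and the argument leading to (\ref{eq15}) gives $\displaystyle{4 \brack 33} = d_3 d_4/(d_3 + 4 d_4) = 2 d_3/(d_3 + 8)$. For the remaining ones I would use the irreducible symmetric pair $({\frak g}, {\frak k})$ with ${\frak k} = {\frak h} \oplus {\frak m}_2 = {\frak k}_1 \oplus {\frak h}_2$ and ${\frak k}_1 = {\frak h}_0 \oplus {\frak m}_2$ set up just before Proposition \ref{prop13}: the $\mbox{Ad}(K)$-invariant metric $v_1\cdot B|_{{\frak k}_1} + v_2\cdot B|_{{\frak h}_2} + v_3\cdot B|_{{\frak m}_1}$ has Ricci tensor computable by Lemma \ref{ric1}(1) in terms of $\left[\!{1 \brack 11}\!\right]$, $\left[\!{1 \brack 33}\!\right]$, $\left[\!{2 \brack 22}\!\right]$, $\left[\!{2 \brack 33}\!\right]$, and for the choice $v_1 = u_0 = x_2$, $v_2 = u_2$, $v_3 = x_1$ this metric coincides with $< \,\, , \,\, >$, so equating the corresponding Ricci components (exactly as in the derivation of (\ref{eqq17})) expresses those numbers, and hence $\displaystyle{0 \brack 33}$, $\displaystyle{0 \brack 44}$, $\displaystyle{2 \brack 33}$, $\displaystyle{2 \brack 22}$, in terms of $d_2, d_3, d_4$; with $d_4 = 2$ this produces $\displaystyle{0 \brack 33} = d_3/(d_3 + 8)$, $\displaystyle{0 \brack 44} = 8/(d_3 + 8)$, $\displaystyle{2 \brack 33} = d_3(d_3 + 2)/(2(d_3 + 8))$ and $\displaystyle{2 \brack 22} = d_2 - d_3(d_3 + 2)/(2(d_3 + 8))$, the Type Ia analogue of Lemma \ref{lemma4}.

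Finally I would substitute these closed forms into the expressions for $r_0, r_2, r_3, r_4$ from Lemma \ref{ric1}(1) and simplify (using $\displaystyle{3 \brack 43} = 2 - 2\displaystyle{0 \brack 44} = 2d_3/(d_3+8)$ along the way) to obtain (\ref{eq275}); in fact the result is the formal specialization of Proposition \ref{prop5} obtained by putting $d_1 = 0$ (which kills the ${\frak h}_1$ terms) and $d_4 = 2$ (which in particular makes the surviving $u_1$-coefficient $d_4 - 2$ in $r_4$ vanish). I do not expect any genuine obstacle: the only step requiring attention is justifying the metric-matching $v_1 = u_0 = x_2$, i.e.\ that ${\frak k} = {\frak k}_1 \oplus {\frak h}_2$ is a decomposition into non-isomorphic ideals with ${\frak m}_1$ an inequivalent $\mbox{Ad}(K)$-module so that Lemma \ref{ric1}(1) applies — but this is precisely the setting already established before Proposition \ref{prop13}, so once that is in place the whole computation is routine bookkeeping, best carried out with a computer algebra system as in the rest of the paper.
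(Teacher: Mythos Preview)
Your proposal is correct and follows essentially the same approach as the paper: the paper's entire proof is the single phrase ``By the same method as in Section 4, we have,'' and your write-up is precisely a careful unpacking of that method specialized to Type Ia (${\frak h}_1 = \{0\}$, $d_4 = 2$), including the observation that (\ref{eq275}) is the formal specialization of Proposition \ref{prop5} at $d_1 = 0$, $d_4 = 2$.
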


\medskip

Now a metric  
\begin{center}
$< \,\, , \,\, >  =  
u_0\cdot B|_{{\mbox{\footnotesize$ \frak h$}}_0} + u_2\cdot B|_{{\mbox{\footnotesize$ \frak h$}}_2} + 
x_1\cdot B|_{{\mbox{\footnotesize$ \frak m$}}_1} + {x_2}\cdot B|_{{\mbox{\footnotesize$ \frak m$}}_2}$
 \end{center}
on $G$ is  Einstein  if and only if there exists a positive solution $\{u_0, u_2,
 x_1, {x_2}, e \}$ of the system of equations  
\begin{equation} \label{201}
r_0 = e, \quad r_2 = e, \quad r_3 = e, \quad r_4 = e.
\end{equation}

\medskip 

 We normalize  the system of equations by putting $x_1 = 1$. 
Then
 the  equation $r_0 = r_4$ give rise to the relation  $u_0= x_2$.  Hence, by Proposition \ref{prop13} we  only obtain  Einstein metrics which are naturally reductive.

\begin{normalsize}

\end{normalsize}

\end{document}